%
%
%
%
\documentclass[preprint,review,10pt]{amsart}
\usepackage{amsmath,amssymb,amsfonts,xspace}
\newtheorem{theorem}{Theorem}[section]

\theoremstyle{definition}
\newtheorem{definition}[theorem]{Definition}
\newtheorem{example}[theorem]{Example}

\newtheorem{proposition}[theorem]{Proposition}
\newtheorem{corollary}[theorem]{Corollary}
\newtheorem{lem}[theorem]{Lemma}

\theoremstyle{remark}
\newtheorem{remark}[theorem]{Remark}

\numberwithin{equation}{section}



\begin{document}

\title{  $(u,v)$-absorbing (prime) hyperideals in commutative multiplicative hyperrings }

\author{Mahdi Anbarloei}
\address{Department of Mathematics, Faculty of Sciences,
Imam Khomeini International University, Qazvin, Iran.
}

\email{m.anbarloei@sci.ikiu.ac.ir}


\subjclass[2010]{ Primary 20N20; Secondary 16Y99}


\keywords{  $(u,v)$-absorbing hyperideal, $AB$-$(u,v)$-absorbing hyperideal, $(u,v)$-absorbing prime hyperideal.}

\begin{abstract}
 In this paper, we will introduce the notion of $(u,v)$-absorbing hyperideals
in multiplicative hyperrings and we will show some properties of them.
Then we extend this concept to the notion  of $(u,v)$-absorbing prime hyperideals  and then we will give some results about  them.
\end{abstract}
\maketitle

\section{Introduction}
In 1934 \cite{marty}, the French mathematician F. Marty presented the notion of hyperstructures or multioperations in which  an operation yields a set of values rather than a single value.
 Many  authors have contributed to the advancement of this novel area of modern algebra \cite{f1,f2,f3,f4,f5,f7,f8,f9}. 
An importan class of the algebraic hyperstructures  called the multiplicative hyperring was defined by Rota in 1982 \cite{f14}. In this hyperstructure,  the multiplication is a hyperoperation however  the addition is an operation.  Multiplicative hyperrings are deeply demonstrated and characterized in  \cite{ameri5, ameri6,  anb4,   Kamali, Ghiasvand, Ghiasvand2, f16, ul}.  Dasgupta studied the prime and primary hyperideals in multiplicative hyperrings in \cite{das}. The notions of 1-absorbing  and $v$-absorbing hyperideals as  generalizations of the prime hyperideals were studied in \cite{Ghiasvand2} and \cite{anb4}. 
The idea of $(u,v)$-closed hyperideals in a multiplicative hyperring was proposed in \cite{anb7}. Our purpose  of this research work is to  introduce and study   the notion of $(u,v)$-absorbing  hyperideals  as an extension of the $n$-absorbing hyperideals in  a commutative multiplicative hyperring.   Several specific properties  are given to illustrate  the structures of the new notion.  Furthermore,  we  extend this concept to $(u,v)$-absorbing  prime hyperideals.

\section{ multiplicative hyperrings}
In this section, we  recall some background materials.   A hyperoperation $``\circ" $ on non-empty set $X$ is a mapping from $X \times X$ into $P^*(X)$ such that $P^*(X)$ is the family of all non-empty subsets of $X$. In this case,  $(X,\circ)$ is called hypergroupoid.  Let $X_1,X_2$ be two subsets of $X$ and $x \in X$, then $X_1 \circ X_2 =\cup_{x_1 \in X_1, x_2 \in X_2}x_1 \circ x_2,$ and $ X_1 \circ x=X_1 \circ \{x\}.$ This means that the hyperoperation $``\circ" $ on $X$ can be extended to  subsets of $X$. A hypergroupoid $(X, \circ)$ is called  a semihypergroup if $\cup_{a \in y \circ z}x \circ a=\cup_{b \in x \circ y} b \circ z $ for all $x,y,z \in X$ which means $\circ$ is associative. A semihypergroup $X$ is called a hypergroup if  $x \circ X=X=X\circ x$ for each  $x \in X$. A non-empty subset $Y$ of a semihypergroup $(X,\circ)$ refers to a subhypergroup if   $x \circ Y=Y=Y \circ x$ for each $x \in Y$  \cite{f10}. 

\begin{definition}
 \cite{f10} An algebraic structure $(A,+,\circ)$ refers to a  commutative multiplicative hyperring if 
 \begin{itemize}
\item[\rm(1)]~ $(A,+)$ is a commutative group; 
\item[\rm(2)]~ $(A,\circ)$ is a semihypergroup; 
\item[\rm(3)]~ $x\circ (y+z) \subseteq x\circ y+x\circ z$ and $(y+z)\circ x \subseteq y\circ x+z\circ x$ for every $x, y, z \in A; $
\item[\rm(4)]~  $x\circ (-y) = (-x)\circ y = -(x\circ y)$ for every $x, y \in A$; 
\item[\rm(5)]~  $x \circ y =y \circ x$ for every $x,y \in A$.
 \end{itemize}
 \end{definition}
If in (3), the equality holds then the multiplicative hyperring $A$ is called strongly distributive. 

Conseder the ring of integers $(\mathbb{Z},+,\cdot)$. For each subset $T \in P^\star(\mathbb{Z})$ with $\vert T\vert \geq 2$, there exists a multiplicative hyperring $(\mathbb{Z}_T,+,\circ)$ where $\mathbb{Z}_T=\mathbb{Z}$ and  $a \circ b =\{a.x.b\ \vert \ x \in T\}$ for all $a,b\in \mathbb{Z}_T$ \cite{das}.
 \begin{definition}\cite{ameri} 
An element $e \in A$ is considered as an identity element if $a \in a\circ e$ for all $a \in A$.
\end{definition}
Throughout this paper, $A$ denotes a commutative multiplicative hyperring with identity 1. 
 \begin{definition} \cite{ameri} 
 An element $x \in A$ is called unit, if there exists $y \in A$ such that $1 \in x \circ y$.  Denote the set of all unit elements in $A$ by $U(A)$.
 \end{definition}
 Moreover, a hyperring $A$ refers to a hyperfield if each non-zero element of $A$ is unit.
\begin{definition} \cite{f10} A non-empty subset $B$ of  $A$ is a  hyperideal  if 
 \begin{itemize}
\item[\rm(i)]~  $x - y \in B$ for all $x, y \in B$; 
\item[\rm(ii)]~ $r \circ x \subseteq B$ for all $x \in B$ and $r \in A$.
 \end{itemize}
 \end{definition}
\begin{definition}
 \cite{das} A proper hyperideal $B$ in  $A$ refers to a prime hyperideal if $x \circ y \subseteq B$ for $x,y \in A$, then $x \in B$ or $y \in B$. 
 \end{definition} 
  The intersection of all prime hyperideals of $A$ containing a hyperideal $B$ is said to be the prime radical of $B$,  denoted by $rad (B)$. If the multiplicative hyperring $A$  has no prime hyperideal containing $B$, we define $rad(B)=A$. Assume that $\mathcal{C}$ is the class of all finite products of elements of $A$ that is $\mathcal{C} = \{a_1 \circ a_2 \circ \cdots \circ a_n \ \vert \ a_i \in A, n \in \mathbb{N}\} \subseteq P^{\ast }(A)$ and  $B$ is a hyperideal of $A$. $B$ refers to a $\mathcal{C}$-hyperideal of $A$ if for each $C \in \mathcal{C}$ and $ B \cap C \neq \varnothing $ imply $C \subseteq B$. Notice that 
 $\{a \in A \ \vert \  a^n \subseteq B \ \text{for some} \ n \in \mathbb{N}\} \subseteq rad(A)$. The equality holds if $B$ is a $\mathcal{C}$-hyperideal of $A$ (see Proposition 3.2 in \cite{das}). Moreover, a hyperideal $B$ of $A$ refers to  a strong $\mathcal{C}$-hyperideal if for each $D \in \mathfrak{U}$ and  $D \cap B \neq \varnothing$ imply $D \subseteq B$ such that $\mathfrak{U}=\{\sum_{i=1}^n C_i \ \vert \ C_i \in \mathcal{C}, n \in \mathbb{N}\}$ and $\mathcal{C} = \{a_1 \circ a_2 \circ . . . \circ a_n \ \vert \ a_i \in A, n \in \mathbb{N}\}$ (for more details see \cite{phd}). 
\begin{definition} \cite{ameri} 
A proper hyperideal $B$ of 
 $A$ is maximal in $A$ if for
each hyperideal $M$ of $A$ with $B \subset M \subseteq A$, then $M = A$.
\end{definition}
 The intersection of all maximal hyperideals of $A$ is denoted by $J(A)$.
Also, $A$ refers to a local multiplicative hyperring if it has just one maximal hyperideal.   
\begin{definition} \cite{ameri}  Assume that  $B_1$ and $B_2$ are hyperideals of $H$. We define $(B_2:B_1)=\{x \in A \ \vert \ x \circ B_1 \subseteq B_2\}$.
\end{definition}
\section{  $(u,v)$-absorbing  hyperideals }
 A proper hyperideal $Q$ in $A$ refers to a $v$-absorbing hyperideal for $v \in \mathbb{N}$ if whenever $x_1 \circ \cdots \circ x_{v+1} \subseteq Q$ for $x_1,\cdots,x_{v+1} \in A$, then there exist $v$ of the $x_i^,$s whose product is a subset of $Q$. In this section, we aim to introduce the class of the  $(u,v)$-absorbing  hyperideals  as an extension of the $v$-absorbing hyperideals and present some of their properties. 
\begin{definition}
Let $u,v \in \mathbb{N}$ with $u >v$. A proper hyperideal $Q$ of $A$  is called  a   $(u,v)$-absorbing  hyperideal  if whenever    $x_1 \circ \cdots \circ x_u \subseteq Q$ for $x_1,\cdots, x_u \in A \backslash U(A)$, then there are $v$ of the $x_i^,$s whose product is a subset of  $Q$.
\end{definition}
\begin{example} \label{ex1}
Consider the multiplicative hyperring  $(\mathbb{Z}_T,+,\circ)$ where $T=\{2,4\}$. Then $Q=\langle 150 \rangle$ is a $(4,3)$-absorbing hyperideal of the multiplicative hyperring $(\mathbb{Z}_T,+,\circ)$. However, it is not $(3,2)$-absorbing as the fact that $3 \circ 5 ^2=\{300,600,1200\} \subseteq Q$ but $3 \circ 5=\{30,60\} \nsubseteq  Q$ and $5^2=\{50,100\} \nsubseteq  Q$.
\end{example}
\begin{remark} \label{t1}
Assume that $Q$ is a proper hyperideal of $A$ and $k,r,u,v \in  \mathbb{N}$. 
 \begin{itemize}
\item[\rm(i)]~ Let $Q$ be a $(u,v)$-absorbing hyperideal of $A$. Then: 
 \begin{itemize}
\item[\rm(1)]~ $Q$ is a $(k,r)$-absorbing hyperideal for every $k \geq u$ and $r \geq v$.
\item[\rm(2)]~ $Q$ is a $(u-1)$-absorbing hyperideal of $A$.
 \end{itemize}
\item[\rm(ii)]~ $Q$ is a $(u,v)$-absorbing hyperideal of $A$ if and only if $x_1 \circ \cdots \circ x_k \subseteq Q$ for $x_1,\cdots,x_k \in A \backslash U(A)$ and $k \geq u$ imply that there are $v$ of the $x_i^,$s whose product is a subset of $Q$.
 \end{itemize}
\end{remark}
\begin{proposition} \label{t2}
Let $Q_i$ be a proper hyperideal of $A$ and $u_i,v_i \in \mathbb{N}$ for every $i \in \{1,\cdots,k\}$. If $Q_i$ is a $(u_i,v_i)$-absorbing hyperideal of $A$ for every $i \in \{1,\cdots,k\}$, then $Q=\cap_{i=1}^k Q_i$ is a $(u,v)$-absorbing hyperideal of $A$ where $v=v_1+\cdots+v_k$ and $u=\max\{u_1,\cdots,u_k,v+1\}$.
\end{proposition}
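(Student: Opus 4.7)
The plan is to exploit the $(u_i,v_i)$-absorbing property of each $Q_i$ on the \emph{same} chain of elements and then carefully merge the resulting index sets. First, observe that $u > v$ holds automatically from $u = \max\{u_1,\ldots,u_k,v+1\}$, so the notion of $(u,v)$-absorbing hyperideal is meaningful for $Q$; moreover $Q = \cap_{i=1}^k Q_i$ is a proper hyperideal since each $Q_i$ is proper and intersections of hyperideals are again hyperideals.

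Next, I would assume $x_1 \circ \cdots \circ x_u \subseteq Q$ for some $x_1,\ldots,x_u \in A \setminus U(A)$. For every $i \in \{1,\ldots,k\}$, since $Q \subseteq Q_i$ and $u \geq u_i$, the part (ii) of Remark \ref{t1} applied to the $(u_i,v_i)$-absorbing hyperideal $Q_i$ yields a subset $S_i \subseteq \{1,\ldots,u\}$ with $|S_i| = v_i$ such that $\prod_{j \in S_i} x_j \subseteq Q_i$.

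The key step is to form $S = S_1 \cup \cdots \cup S_k$, observe $|S| \leq v_1 + \cdots + v_k = v$, and then enlarge $S$ to a subset $T \subseteq \{1,\ldots,u\}$ of size \emph{exactly} $v$; this is possible precisely because $u \geq v+1$. Now $T \supseteq S_i$ for each $i$, so splitting
$$\prod_{j \in T} x_j = \Bigl(\prod_{j \in T \setminus S_i} x_j\Bigr) \circ \Bigl(\prod_{j \in S_i} x_j\Bigr)$$
and using that $Q_i$ absorbs hyperproducts with arbitrary elements of $A$ gives $\prod_{j \in T} x_j \subseteq Q_i$. Intersecting over $i$ delivers $\prod_{j \in T} x_j \subseteq Q$, as required.

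The only subtlety, and the reason the definition of $u$ contains the term $v+1$, is that the $S_i$'s may overlap and their union may have strictly fewer than $v$ elements; the bound $u \geq v+1$ is exactly what is needed to pad $S$ up to a set of size $v$ within $\{1,\ldots,u\}$. Beyond this bookkeeping, the argument is a straightforward application of Remark \ref{t1}(ii) combined with the hyperideal absorption property.
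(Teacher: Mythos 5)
Your proof is correct and follows essentially the same route as the paper: apply each $(u_i,v_i)$-absorbing hypothesis to the same product $x_1\circ\cdots\circ x_u$, take the union of the resulting index sets, and use the absorption property of hyperideals to conclude. In fact you are slightly more careful than the paper at the final step, where you explicitly pad the union $S$ (which may have fewer than $v$ elements) to a set $T$ of size exactly $v$ inside $\{1,\ldots,u\}$ — a detail the paper's proof glosses over and which is precisely where the term $v+1$ in the definition of $u$ is used.
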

\begin{proof}
Assume that $Q_i$ is a $(u_i,v_i)$-absorbing hyperideal of $A$ for every $i \in \{1,\cdots,k\}$. By Remark \ref{t1} (i), we conclude that $Q_i$ is a $(u,v_i)$-absorbing hyperideal of $A$ where  $v=v_1+\cdots+v_k$ and $u=\max\{u_1,\cdots,u_k,v+1\}$. Let $x_1 \circ \cdots \circ x_u \subseteq Q$ for $x_1,\cdots,x_u \in A \backslash U(A)$. Since $Q_i$ is a $(u,v_i)$-absorbing hyperideal of $A$ for every $i \in \{1,\cdots,k\}$ and  $x_1 \circ \cdots \circ x_u \subseteq Q_i$, we get the result that $\Pi_{\alpha \in I_i} x_{\alpha} \subseteq Q_i$ for each  $i \in \{1,\cdots,k\}$ such that $I_i \subseteq \{1,\cdots,u\}$ and $\vert I_i \vert =v_i$. Put $I= \cup_{i=1}^k I_i$. Since  $\Pi_{\alpha \in I} x_{\alpha} \subseteq Q$ and $\vert I \vert \leq v$, we conclude that $Q=\cap_{i=1}^k Q_i$ is a $(u,v)$-absorbing hyperideal of $A$.
\end{proof}
Let $u,v \in \mathbb{N}$. Then  the smallest integer not less than $\frac{u}{v} $ is denoted by $\lceil \frac{u}{v} \rceil$.
\begin{proposition}\label{t3}
Let  $Q$ be  a $(u,v)$-absorbing $\mathcal{C}$-hyperideal of $A$ for $u,v \in  \mathbb{N}$ with $u >v$. Then:
 \begin{itemize}
\item[\rm(i)]~ $rad(Q) =\{x \in A \ \vert \ x^v \subseteq Q \}$.
\item[\rm(ii)]~ $rad(Q)$ is a $(k,v)$-absorbing hyperideal of $A$ where $k=\max\{\lceil \frac{u}{v} \rceil, v+1\}$.
 \end{itemize}
\end{proposition}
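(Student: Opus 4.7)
For part (i), my plan is to set $R = \{x \in A : x^v \subseteq Q\}$ and show $R = rad(Q)$. The containment $R \subseteq rad(Q)$ is immediate. For the reverse, take $x \in rad(Q)$; since $Q$ is a $\mathcal{C}$-hyperideal, Proposition 3.2 of \cite{das} furnishes some $n \in \mathbb{N}$ with $x^n \subseteq Q$. First I would rule out $x \in U(A)$: if $1 \in x \circ y$ for some $y \in A$, then $x^{n-1} \subseteq x^{n-1} \circ (x \circ y) = x^n \circ y \subseteq Q$, and iterating this step pushes $1$ into $Q$, contradicting properness. If $n \leq v$, then $x^v = x^n \circ x^{v-n} \subseteq Q$ by the hyperideal property. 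If $n > v$, I enlarge $n$ to some $m \geq u$ so that $x^m \subseteq Q$ is a product of at least $u$ copies of the non-unit $x$, and Remark \ref{t1}(ii) extracts $v$ factors with product $x^v \subseteq Q$.

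For part (ii), suppose $x_1 \circ \cdots \circ x_k \subseteq rad(Q)$ with $x_i \in A \setminus U(A)$ and $k = \max\{\lceil u/v \rceil, v+1\}$. The strategy has two stages. In the first, I promote the containment to $x_1^v \circ \cdots \circ x_k^v \subseteq Q$: pick any element $y$ of the nonempty set $x_1 \circ \cdots \circ x_k$; then $y \in rad(Q)$, so part (i) gives $y^v \subseteq Q$. By monotonicity of the hyperoperation, $y^v \subseteq (x_1 \circ \cdots \circ x_k)^v$, and this last set equals $x_1^v \circ \cdots \circ x_k^v$ by commutativity and associativity. Since $y^v$ is nonempty, the finite product $x_1^v \circ \cdots \circ x_k^v \in \mathcal{C}$ meets $Q$ nontrivially, and the $\mathcal{C}$-hyperideal hypothesis then forces the whole product into $Q$. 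This product has $vk \geq v \lceil u/v \rceil \geq u$ non-unit factors, so Remark \ref{t1}(ii) yields $v$ of them with product in $Q$, say $x_1^{b_1} \circ \cdots \circ x_k^{b_k} \subseteq Q$ for nonnegative integers $b_i$ with $\sum_i b_i = v$.

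In the second stage I convert this multiplicity datum into $v$ of the original $x_i$'s. Put $J = \{i : b_i \geq 1\}$; since $|J| \leq v < k$, I can enlarge $J$ to some $J' \subseteq \{1,\ldots,k\}$ with $|J'| = v$. For every prime hyperideal $P$ of $A$ containing $Q$, the inclusion $x_1^{b_1} \circ \cdots \circ x_k^{b_k} \subseteq P$ and primeness of $P$ force $x_{i^*} \in P$ for some $i^* \in J \subseteq J'$, and the hyperideal property then gives $\prod_{i \in J'} x_i \subseteq P$; intersecting over all such $P$ yields $\prod_{i \in J'} x_i \subseteq rad(Q)$, the required $v$-subproduct. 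The main obstacle I anticipate is the promotion step of stage one: the bare hypothesis only gives $y^v \subseteq Q$ for each individual $y \in x_1 \circ \cdots \circ x_k$, and it is the $\mathcal{C}$-hyperideal assumption that lets me bootstrap one nonempty witness into the full containment $x_1^v \circ \cdots \circ x_k^v \subseteq Q$. A secondary subtlety is the pigeonhole in stage two, where the bound $k \geq v+1$ is used precisely to ensure there is room to enlarge $J$ up to size $v$.
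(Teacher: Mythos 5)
Your proof is correct and follows essentially the same route as the paper: in (i) you reduce to a sufficiently high power of $x$ and apply the $(u,v)$-absorbing property, and in (ii) you promote $x_1\circ\cdots\circ x_k\subseteq rad(Q)$ to $x_1^v\circ\cdots\circ x_k^v\subseteq Q$ via part (i) together with the $\mathcal{C}$-hyperideal hypothesis and then invoke the $(kv,v)$-absorbing property, exactly as the paper does. The only (harmless) divergences are at the finish: the paper bounds the minimal $n$ with $x^n\subseteq Q$ by citing Theorem 3.6 of \cite{anb4} where you simply enlarge the exponent, and in (ii) it returns the extracted factors to $rad(Q)$ by padding exponents and reusing part (i) whereas you intersect over the primes containing $Q$; you are also more careful in explicitly checking $x\notin U(A)$ and in handling possibly repeated indices.
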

\begin{proof}
(i) Assume that $Q$ is  a $(u,v)$-absorbing hyperideal of $A$. Take any $x \in rad(Q)$. Then there exists a smallest element $n \in \mathbb{N}$ such that $x^n \subseteq Q$. Let $n >v$. Remark \ref{t1}(i)(2) shows that  $Q$ is a $(u-1)$-absorbing hyperideal of $A$. Then we conclude that $n \leq u-1$ by Theorem 3.6 in \cite{anb4}. This means $x^u \subseteq Q$ which means $x^v \subseteq Q$ as $Q$ is  a $(u,v)$-absorbing hyperideal of $A$. This is contradicts the fact that $n$ is a smallest positive integer  such that $x^n \subseteq Q$. Therefore $n \leq v$ and $x^v \subseteq Q$.

(ii) Suppose that $x_1 \circ \cdots \circ x_k \subseteq rad(Q)$ such that  $k=\max\{\lceil \frac{u}{v} \rceil, v+1\}$ and $x_1,\cdots,x_k \in A \backslash U(A)$. Then for any $a \in x_1 \circ \cdots \circ x_k$, $a^v \subseteq Q$. Again, $a^v \subseteq (x_1 \circ \cdots \circ x_k)^v \subseteq x_1^v \circ \cdots \circ x_k^v \subseteq Q$. So, $(x_1^v \circ \cdots \circ x_k^v) \cap Q \neq \varnothing$ and then $x_1^v \circ \cdots \circ x_k^v \subseteq Q$ as $Q$ is a $\mathcal{C}$-hyperideal. The hyperideal $Q$ is a $(kv,v)$-absorbing hyperideal of $A$ because $k \geq \lceil \frac{u}{v} \rceil$. Then we conclude that $x_{i_1}^{r_1} \circ \cdots \circ x_{i_v}^{r_v} \subseteq Q$ for some $\{i_1,\cdots,i_v\} \subseteq \{1,\cdots,k\}$ such that $0 \leq i_1,\cdots,i_v \leq v$ and $r_1+\cdots+r_v=v$. This implies that $x_{i_1}^{v} \circ \cdots \circ x_{i_v}^{v} \subseteq Q$ which means $x_{i_1} \circ \cdots \circ x_{i_v} \subseteq rad(Q)$. Consequently,  $rad(Q)$ is a $(k,v)$-absorbing hyperideal of $A$.
\end{proof}
\begin{theorem}\label{t4}
Assume that $Q$ is a hyperideal of $A$ and $u,v \in  \mathbb{N}$ with $u >v$. Then there exists a  $(u,v)$-absorbing hyperideal of $A$ which is minimal in the set of all $(u,v)$-absorbing hyperideals of $A$ containing $Q$.
\end{theorem}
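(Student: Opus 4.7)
The plan is a standard Zorn's lemma argument. Let $\Sigma$ denote the family of all $(u,v)$-absorbing hyperideals of $A$ that contain $Q$, partially ordered by reverse inclusion. First I would verify that $\Sigma$ is non-empty: assuming $Q$ is proper, it is contained in some maximal hyperideal of $A$ by the usual Zorn argument, maximal hyperideals in a commutative multiplicative hyperring with identity are prime, and a prime hyperideal is $(2,1)$-absorbing directly from the definition. Hence it is $(u,v)$-absorbing by Remark \ref{t1}(i)(1), since the hypothesis $u > v$ with $u,v \in \mathbb{N}$ forces $u \geq 2$ and $v \geq 1$.

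Next I would check the Zorn hypothesis: every chain $\{Q_\lambda\}_{\lambda \in \Lambda}$ in $\Sigma$ (with respect to $\subseteq$) admits a lower bound in $\Sigma$. The natural candidate is $Q^{\star} := \bigcap_{\lambda \in \Lambda} Q_\lambda$, which plainly contains $Q$ and is a proper hyperideal because no $Q_\lambda$ contains $1$. The main obstacle is showing that $Q^{\star}$ inherits the $(u,v)$-absorbing property. Suppose $x_1 \circ \cdots \circ x_u \subseteq Q^{\star}$ with $x_1,\ldots,x_u \in A \setminus U(A)$. For each $\lambda$ the $(u,v)$-absorbing property of $Q_\lambda$ yields a $v$-subset $I_\lambda \subseteq \{1,\ldots,u\}$ with $\prod_{i \in I_\lambda} x_i \subseteq Q_\lambda$, but a priori $I_\lambda$ depends on $\lambda$, so one cannot directly read off a single subset that works in the intersection.

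The finiteness observation that resolves this is that the set $\mathcal{S}$ of $v$-subsets of $\{1,\ldots,u\}$ has only $\binom{u}{v}$ elements. For each $\lambda$ set $\mathcal{S}_\lambda := \{I \in \mathcal{S} : \prod_{i \in I} x_i \subseteq Q_\lambda\}$; each $\mathcal{S}_\lambda$ is non-empty, and the inclusion $Q_\lambda \subseteq Q_\mu$ immediately entails $\mathcal{S}_\lambda \subseteq \mathcal{S}_\mu$. Hence $\{\mathcal{S}_\lambda\}$ is a chain of non-empty subsets of a finite set and therefore has non-empty intersection; any $I$ in this common intersection satisfies $\prod_{i \in I} x_i \subseteq Q_\lambda$ for every $\lambda$, i.e., $\prod_{i \in I} x_i \subseteq Q^{\star}$. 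This establishes $Q^{\star} \in \Sigma$, and Zorn's lemma applied to $(\Sigma,\supseteq)$ then produces a maximal element for reverse inclusion, which is precisely a minimal $(u,v)$-absorbing hyperideal of $A$ containing $Q$.
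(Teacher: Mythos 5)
Your proposal is correct and takes essentially the same route as the paper: show the family of $(u,v)$-absorbing hyperideals containing $Q$ is non-empty via a maximal hyperideal, show it is closed under intersections of chains, and apply Zorn's lemma to the reverse-inclusion order. Your pigeonhole argument on the finitely many $v$-element subsets of $\{1,\dots,u\}$ is exactly the (contrapositive of the) step the paper asserts when it claims some $P_j$ in the chain would fail the absorbing condition, so nothing is missing.
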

\begin{proof}
Suppose that $\Gamma=\{P \ \vert \ P \  \text{is a $(u,v)$-absorbing hyperideal of $A$ and $Q \subseteq P$} \}$. Let $M$ be a maximal hyperideal of $A$ containing  $Q$. Then $M$ is a $(u,v)$-absorbing hyperideal   for all $u,v \in  \mathbb{N}$ with $u >v$ and so $M \in \Gamma$ which means $\Gamma \neq \varnothing$. Order $\Gamma$ by $\leq$ such that $P_1 \leq P_2$ if and only if $P_1 \supseteq P_2$ for every $P_1, P_2 \in \Gamma$. Let $\{P_i\}_{i \in \Delta}$ be a non-empty chain of hyperideals in $\Gamma$. Put $I=\cap_{i \in \Delta} P_i$. Assume that $x_1 \circ \cdots \circ x_u \subseteq I$ for $x_1,\cdots,x_u \in A \backslash U(A)$ such that none of the product of the terms $v$ of $x_i^,$s is a subset of $I$. Then that there exists $P_j$ in the chain such that none of the product of the terms $v$ of $x_i^,$s is a subset of $P_j$ which is impossible. Hence there are $v$ of the $x_i^,$s whose product is a subset of  $I$. Now, by Zorn$^,$s lemma, we conclude that $(\Gamma, \leq)$ has a maximal element which means $\Gamma$ has a minimal  $(u,v)$-absorbing hyperideal of $A$ containing $Q$.
\end{proof}
As an immediate consequence of the previous theorem, we have the following
result.
\begin{corollary}\label{t5}
Suppose that $Q$ is a hyperideal of $A$ and $v \in \mathbb{N}$. Then there exists a minimal $v$-absorbing hyperideal $P$ of $A$ such that $Q \subseteq P$.  
\end{corollary}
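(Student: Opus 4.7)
The plan is to reduce the corollary to the special case $u = v + 1$ of Theorem~\ref{t4}. The key observation needed is that, for a proper hyperideal of $A$, being $(v+1,v)$-absorbing is equivalent to being $v$-absorbing. One direction is immediate: any $v$-absorbing hyperideal is $(v+1,v)$-absorbing, since the latter condition is simply the restriction of the former to tuples in $(A \setminus U(A))^{v+1}$.

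For the reverse implication I would take a $(v+1,v)$-absorbing hyperideal $P$ together with an arbitrary inclusion $x_1 \circ \cdots \circ x_{v+1} \subseteq P$ with $x_1,\dots,x_{v+1} \in A$. If all $x_i$ are non-units, the conclusion follows directly from the $(v+1,v)$-absorbing hypothesis. Otherwise, pick some $x_k \in U(A)$ and choose $y \in A$ with $1 \in x_k \circ y$. Using the definition of identity element ($a \in a\circ 1$), associativity and commutativity of $\circ$, and the hyperideal absorption property $y \circ P \subseteq P$, one computes
\[
\prod_{i \neq k} x_i \;\subseteq\; 1 \circ \prod_{i \neq k} x_i \;\subseteq\; (x_k \circ y) \circ \prod_{i \neq k} x_i \;=\; y \circ (x_1 \circ \cdots \circ x_{v+1}) \;\subseteq\; y \circ P \;\subseteq\; P,
\]
which exhibits a product of $v$ of the $x_i$'s inside $P$. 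Hence $P$ is $v$-absorbing.

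Once this equivalence is established, Theorem~\ref{t4} applied with the choice $u = v+1$ (which is admissible since $u > v$) produces a hyperideal $P$ that is $(v+1,v)$-absorbing, contains $Q$, and is minimal by inclusion among such hyperideals. By the equivalence of the two classes, $P$ is automatically a minimal $v$-absorbing hyperideal of $A$ containing $Q$, delivering the corollary.

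The only real obstacle is the unit-removal step, whose correctness depends on the interplay of associativity of $\circ$, the defining relation $a \in a \circ 1$ for the identity, and the absorption property of hyperideals; the rest of the argument is a direct appeal to Theorem~\ref{t4}.
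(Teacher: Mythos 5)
Your argument is correct and follows the paper's intended route: the corollary is stated there as an immediate consequence of Theorem~\ref{t4} with $u=v+1$, the passage from $(v+1,v)$-absorbing to $v$-absorbing being covered by Remark~\ref{t1}(i)(2). Your unit-removal computation merely supplies the details of that step, and it is sound.
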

\begin{theorem}\label{t6}
Assume that $Q$ is a $(u,v)$-absorbing hyperideal of $A$ and $k,r \in \mathbb{N}$ with $2 \leq k \leq r \leq u-1$. If $Q$ has $r$ minimal prime hyperideals, then there are at least $\frac{r!}{(r-k)! k!}$ $k$-absorbing hyperideals of $A$ that are minimal over $Q$. 
\end{theorem}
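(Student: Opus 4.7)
My plan is to associate to each $k$-element subset $S\subseteq\{1,\ldots,r\}$ a $k$-absorbing hyperideal $M_S$ of $A$ that is minimal over $Q$, and then to show that the $\binom{r}{k}$ hyperideals so produced are pairwise distinct. Denote the minimal primes of $Q$ by $P_1,\ldots,P_r$ and, for each such $S$, set $P_S=\bigcap_{i\in S}P_i$. Every $P_i$ is $(2,1)$-absorbing since it is prime, so Proposition \ref{t2} applied with $u_i=2$, $v_i=1$ gives $v=k$ and $u=\max\{2,k+1\}=k+1$; hence $P_S$ is $(k+1,k)$-absorbing, i.e.\ a $k$-absorbing hyperideal of $A$, and clearly $Q\subseteq P_S$. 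I then repeat the Zorn's-lemma argument of Theorem \ref{t4} inside the nonempty family $\Gamma_S=\{J:J\text{ is }k\text{-absorbing},\ Q\subseteq J\subseteq P_S\}$ (which contains $P_S$) to obtain a minimal element $M_S\in\Gamma_S$. Any $k$-absorbing hyperideal $J$ with $Q\subseteq J\subsetneq M_S$ would satisfy $J\subseteq P_S$, hence $J\in\Gamma_S$, contradicting minimality; so $M_S$ is in fact minimal over $Q$ in the whole class of $k$-absorbing hyperideals of $A$.

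The crux is distinctness of the $M_S$, which I would obtain via the invariant ``the set of minimal primes of $M_S$''. For each $i\in S$, the prime $P_i$ contains $M_S$; if $P$ is any prime with $M_S\subseteq P\subseteq P_i$, then $P$ contains $Q$, hence contains some minimal prime $P_j$ of $Q$, and the resulting chain $P_j\subseteq P\subseteq P_i$ between two minimal primes of $Q$ forces $P_j=P_i$ and $P=P_i$. Thus each $P_i$ with $i\in S$ is a minimal prime of $M_S$, giving $k$ distinct minimal primes. Appealing to the hyperring analog of Badawi's bound, namely that a $k$-absorbing hyperideal has at most $k$ minimal prime hyperideals (available from the theory of $v$-absorbing hyperideals in \cite{anb4}), these $k$ primes exhaust the minimal primes of $M_S$. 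Consequently the invariant $\{i\in\{1,\ldots,r\}:M_S\subseteq P_i\}$ coincides with $S$, so distinct subsets $S$ yield distinct $M_S$, and we obtain the claimed $\binom{r}{k}=\frac{r!}{(r-k)!\,k!}$ minimal $k$-absorbing hyperideals over $Q$.

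The principal obstacle is the ``at most $k$ minimal primes'' bound for $k$-absorbing hyperideals in the hyperring setting. If it is not stated verbatim in \cite{anb4}, the classical inductive prime-avoidance argument of Badawi transports to multiplicative hyperrings essentially unchanged, using only that a prime hyperideal $P$ containing $P_{i_1}\circ\cdots\circ P_{i_n}$ contains some $P_{i_t}$ — the standard ingredient already implicit elsewhere in the paper. Verifying or precisely citing this bound is the single nontrivial external input required to complete the proof.
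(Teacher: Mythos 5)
Your proposal is correct and follows essentially the same route as the paper: for each $k$-subset $S$ of the minimal primes you intersect to get a $k$-absorbing hyperideal, shrink to a minimal one over $Q$ via the Zorn argument of Theorem \ref{t4}/Corollary \ref{t5}, and separate the resulting hyperideals using the bound that a $k$-absorbing hyperideal has at most $k$ minimal primes (the paper invokes exactly this, via Theorems 3.4 and 3.8 of \cite{anb4}, in the case $k=2$ and leaves $k>2$ to ``a similar argument''). Your version is in fact slightly more careful, since you explicitly run Zorn's lemma inside the family of $k$-absorbing hyperideals contained in $\bigcap_{i\in S}P_i$, which is what the paper's appeal to Corollary \ref{t5} implicitly requires.
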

\begin{proof}
Suppose that $Q$ has $r$ minimal prime hyperideals and $k=2$. Then there exist the distinct prime hyperideals $Q_1,\cdots,Q_r$ of $A$ that are minimal over $Q$. Hence $Q_i \cap Q_j$ is a 2-absorbing hyperideal of $A$ for each $i,j \in \{1,\cdots,r\}$ and $i \neq j$ by Theorem 3.4 in \cite{anb4}. Hence we have $Q \subseteq Q_{ij} \subseteq Q_i \cap Q_j$ for some minmal 2-absorbing hyperideal $Q_{ij}$ of $A$ by Corollary \ref{t5}. Therefore we conclude that $Q_i$ and $Q_j$ are only minimal prime hyperideals over $Q_{ij}$ by Theorem 3.8 in \cite{anb4}. Assume that ${Q_{ij}}^,$s are not distinct such that $Q_{ij}=Q_{st}$ for some $i,j,s,t \in \{1,\cdots,r\}$ and $\{i,j\} \neq \{s,t\}$. Then we conclude that $Q_i$, $Q_j$, $Q_s$ and $Q_t$ are distinct minimal prime hyperideals over $Q_{ij}$ which contradicts the fact that there are are at most two prime hyperideals of $A$ that are minimal over the 2-absorbing hyperideal $Q_{ij}$. Hence the ${Q_{ij}}^,$s are distinct. Thus we get the result that $Q$ has at least $\frac{r!}{(r-k)! k!}$ minimal $2$-absorbing hyperideals. If $k>2$, one can easily complete the proof by using a similar argument.
\end{proof}
\begin{lem} \label{t7}
Let $Q$ be a $(u,v)$-absorbing hyperideals of $A$, $Q_1,\cdots,Q_v$ be incomparable prime $\mathcal{C}$-hyperideals of $A$ such that $Q \subseteq \cap_{i=1}^v Q_i$ and $k_1,\cdots,k_v \in \mathbb{N}$. If $x_1^{k_1} \circ \cdots \circ x_v^{k_v} \subseteq Q$ for $x_i \in Q_i \backslash (\cup_{j \neq i}Q_j)$, then  $x_1 \circ \cdots \circ x_v \subseteq Q$.
\end{lem}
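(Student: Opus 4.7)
The plan is to invoke the $(u,v)$-absorbing property of $Q$ exactly once and then use primeness and incomparability of the $Q_i$'s to force the selected $v$ factors to be precisely $x_1,\dots,x_v$, each with multiplicity one.

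First I would arrange to have at least $u$ factors on the left-hand side. Set $K=k_1+\cdots+k_v$. Because each $x_i$ lies in the proper hyperideal $Q_i$, every $x_i$ belongs to $A\setminus U(A)$. If $K\geq u$, then $x_1^{k_1}\circ\cdots\circ x_v^{k_v}\subseteq Q$ is already a hyperproduct of at least $u$ non-units in $Q$ and I apply Remark \ref{t1}(ii). If $K<u$, I multiply the containment on one side by $x_1^{u-K}$ and use $A\circ Q\subseteq Q$ to obtain
\[
x_1^{k_1+u-K}\circ x_2^{k_2}\circ\cdots\circ x_v^{k_v}\subseteq Q,
\]
a hyperproduct of exactly $u$ non-unit factors to which the definition of $(u,v)$-absorbing applies directly.

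Either way, the property yields non-negative integers $s_1,\dots,s_v$ with $s_1+\cdots+s_v=v$ such that $x_1^{s_1}\circ\cdots\circ x_v^{s_v}\subseteq Q$ (these are the multiplicities with which $x_1,\dots,x_v$ appear among the chosen $v$ factors). I would then show $s_i\geq 1$ for every $i$. Suppose, for contradiction, that $s_i=0$ for some $i$. Then $\prod_{j\neq i}x_j^{s_j}\subseteq Q\subseteq Q_i$. Primeness of $Q_i$ extends from elements to subsets (if $S\circ T\subseteq Q_i$ for subsets $S,T\subseteq A$, then $S\subseteq Q_i$ or $T\subseteq Q_i$, a direct consequence of the element-wise definition applied to any $s\in S\setminus Q_i$ and arbitrary $t\in T$). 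Iterating this on $\prod_{j\neq i}x_j^{s_j}$ produces some $j\neq i$ with $x_j^{s_j}\subseteq Q_i$; a further iteration on the self-product yields $x_j\in Q_i$, contradicting the hypothesis $x_j\in Q_j\setminus\bigcup_{t\neq j}Q_t$.

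Hence $s_i\geq 1$ for all $i$, and combined with $\sum_i s_i=v$ this forces $s_i=1$ for every $i$, giving $x_1\circ\cdots\circ x_v\subseteq Q$. The main technical point is the iterated use of primeness on hyperproducts in the third step; the padding in the first step and the arithmetic in the last step are routine once the non-unit status of each $x_i$ is recorded.
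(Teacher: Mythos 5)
Your proof is correct and follows essentially the same route as the paper: apply the $(u,v)$-absorbing property (upgraded to $(k,v)$-absorbing when $k=k_1+\cdots+k_v\geq u$) to obtain exponents $s_1+\cdots+s_v=v$ with $x_1^{s_1}\circ\cdots\circ x_v^{s_v}\subseteq Q$, then use primeness and incomparability of the $Q_i$ to rule out $s_i=0$ and force $s_i=1$ for all $i$. Your genuine additions are the padding step for the case $k_1+\cdots+k_v<u$ (the paper's proof only treats $k\geq u$ and is silent on the other case), and the explicit verifications that each $x_i$ is a non-unit and that primeness of $Q_i$ passes to hyperproducts of several elements --- details the paper takes for granted.
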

\begin{proof}
Suppose that $k_1,\cdots,k_v \in \mathbb{N}$. Put $k=k_1+\cdots+k_v$.  Let $k \geq u$. So $Q$ is a $(k,v)$-absorbing hyperideal of $A$ by Remark \ref{t1}(i). Then there exist integers $r_1,\cdots,r_v$ such that $x_1^{r_1} \circ \cdots \circ x_v^{r_v} \subseteq Q$, $0 \leq r_i \leq k_i$ and $r_1+\cdots+r_v=v$. Assume that one of the $r_i^,$s is zero. Then we have $x_1^{r_1} \circ \cdots x_{i-1}^{r_{i-1}} \circ x_{i+1}^{r_{i+1}}\circ \cdots  \circ x_v^{r_v} \subseteq Q \subseteq Q_i$. Then there exists $j \in \{1,\cdots,i-1,i+1,\cdots, v\}$ such that $x_j \in Q_i$ which is impossible. Hence $x_1 \circ \cdots \circ x_v \subseteq Q$.
\end{proof}
\begin{theorem} \label{t8}
Assume that $Q$ is a $(u,v)$-absorbing strong $\mathcal{C}$-hyperideal of $A$ for $u,v \in \mathbb{N}$ with $2 \leq v < u$ and $Q_1,\cdots,Q_v$ are the only minimal prime $\mathcal{C}$-hyperideals over $Q$. If $z_i \in Q_i \backslash (\cup_{j \neq i}Q_j)$, then $Q_j \circ \Pi_{i \neq j} z_i \subseteq Q$.
\end{theorem}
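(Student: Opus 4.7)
The plan is to reduce the desired containment, for each fixed $j$ and each $y\in Q_j$, to an application of Lemma~\ref{t7} on a high-power version of the product $y\circ\prod_{i\neq j}z_i$.

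\textbf{Radical identification and key computation.} Because $Q_1,\ldots,Q_v$ are the only minimal prime $\mathcal{C}$-hyperideals over $Q$, we have $rad(Q)=\bigcap_{i=1}^v Q_i$; since $Q$ is a strong $\mathcal{C}$-hyperideal (hence a $\mathcal{C}$-hyperideal), Proposition~\ref{t3}(i) rewrites this as $\{x\in A:x^v\subseteq Q\}$. Now fix $j$ and $y\in Q_j$. Any element $b\in y\circ\prod_{i\neq j}z_i$ lies in $Q_j$ (because $y$ does) and in each $Q_l$ with $l\neq j$ (because the factor $z_l$ does), hence $b\in rad(Q)$ and $b^v\subseteq Q$. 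Consequently the single product $y^v\circ\prod_{i\neq j}z_i^v\in\mathcal{C}$ meets $Q$, and the $\mathcal{C}$-hyperideal property forces $y^v\circ\prod_{i\neq j}z_i^v\subseteq Q$; multiplying by further copies of the factors gives $y^N\circ\prod_{i\neq j}z_i^N\subseteq Q$ for any $N\ge v$. Taking $N$ with $vN\ge u$, Lemma~\ref{t7} applied with $x_j=y$, $x_i=z_i$ for $i\neq j$, and all $k_i=N$ yields $y\circ\prod_{i\neq j}z_i\subseteq Q$, provided $y\in Q_j\setminus\bigcup_{l\neq j}Q_l$. Specializing to $y=z_j$ also gives the useful intermediate identity $z_1\circ\cdots\circ z_v\subseteq Q$.

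\textbf{The main obstacle: general $y\in Q_j$.} When $y$ additionally lies in some $Q_l$ with $l\neq j$, Lemma~\ref{t7} cannot be invoked directly, because its primality-based ``no $r_i$ vanishes'' step fails at the index $j$. My plan here is to write $y=(y+s)-s$ for an $s\in Q_j$ chosen so that both $s$ and $y+s$ lie in $Q_j\setminus\bigcup_{l\neq j}Q_l$, apply the previous paragraph to each, and then recombine. Such an $s$ should exist by a prime-avoidance argument inside $Q_j$: the proper subhyperideals $Q_j\cap Q_l$ for $l\neq j$ (proper by incomparability, witnessed by $z_j$) together with the finitely many translated cosets $Q_j\cap(-y+Q_l)$ cannot cover $Q_j$. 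Given such an $s$, the key computation delivers $s\circ\prod_{i\neq j}z_i\subseteq Q$ and $(y+s)\circ\prod_{i\neq j}z_i\subseteq Q$; the one-sided distributivity $(y+s)\circ\prod_{i\neq j}z_i\subseteq y\circ\prod_{i\neq j}z_i+s\circ\prod_{i\neq j}z_i$ produces an element of $y\circ\prod_{i\neq j}z_i$ that lies in $Q$, and the strong $\mathcal{C}$-hyperideal hypothesis then forces the entire single product $y\circ\prod_{i\neq j}z_i\in\mathcal{C}$ to sit inside $Q$. The hardest step is establishing the existence of the translator $s$ in the hyperring setting; this is precisely the place where the combined hypotheses on $Q$ (strong $\mathcal{C}$-hyperideal, exactly $v$ pairwise incomparable minimal primes) do the real work.
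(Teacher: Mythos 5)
Your overall strategy coincides with the paper's: first dispose of the ``good'' elements $y\in Q_j\setminus\bigcup_{l\neq j}Q_l$ by observing that $y\circ\prod_{i\neq j}z_i\subseteq\bigcap_{i=1}^vQ_i=rad(Q)$, passing to powers via Proposition~\ref{t3}(i) and the $\mathcal{C}$-hyperideal property, and invoking Lemma~\ref{t7}; then reach an arbitrary $y\in Q_j$ by an additive translation whose translator is ``good,'' recombining with the one-sided distributivity and the strong $\mathcal{C}$-hyperideal hypothesis. That first half and the recombination at the end are correct and are exactly what the paper does.

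The gap is in the production of the translator. You assert that an $s\in Q_j$ with both $s$ and $y+s$ outside $\bigcup_{l\neq j}Q_l$ ``should exist by a prime-avoidance argument'' applied to the sets $Q_j\cap Q_l$ and the translated cosets $Q_j\cap(-y+Q_l)$. Ordinary prime avoidance covers unions of prime (hyper)ideals, not unions of additive cosets of primes; a coset $-y+Q_l$ is not a hyperideal, and ruling out that finitely many such cosets cover $Q_j$ is a genuinely different (Neumann-type) covering problem that your sketch does not address, and which moreover must be carried out inside a multiplicative hyperring. This is precisely the step the paper does not reprove: it cites Corollary 3.12 of \cite{anb4}, which supplies $z\in Q_j\setminus\bigcup_{l\neq j}Q_l$ and $y'\in A$ such that $w+x\in Q_j\setminus\bigcup_{l\neq j}Q_l$ for \emph{every} $w\in y'\circ z$. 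Note the form of that statement: the translator $w$ ranges over a hyperproduct $y'\circ z$ with $z$ good, so $w\circ\prod_{i\neq j}z_i\subseteq y'\circ z\circ\prod_{i\neq j}z_i\subseteq Q$ follows at once from the good case and the hyperideal property, without needing $w$ itself to avoid the other primes. So your plan is the right one, but the ``hardest step'' you flag is not a routine avoidance argument; without the cited corollary (or an independent proof of a coset-avoidance statement), the proof is incomplete.
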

\begin{proof}
Assume that $x \in P_j$. If $x \notin \cup_{j \neq i}Q_j$, then we have $x \circ \Pi_{i \neq j} z_i \subseteq \cap_{i=1}^vQ_i=rad(Q)$ as the $Q_i^,$s are the only minimal prime hyperideals over $Q$. Therefore we conclude that $x \circ \Pi_{i \neq j} z_i \subseteq Q$ by Proposition \ref{t3} and Lemma \ref{t7}. Let $x \in Q_j \
\cap (\cup_{j \neq i}Q_j)$. Then there exist $y \in A$ and $z \in Q_j \backslash (\cup_{j \neq i}Q_j)$ such that $w+x \in Q_j \backslash (\cup_{j \neq i}Q_j)$ for every  $w \in y \circ z$ by Corollary 3.12 in \cite{anb4}. It follows that   $(w+x) \circ \Pi_{i \neq j} z_i \subseteq Q$ and so $w \circ \Pi_{i \neq j} z_i + x \circ \Pi_{i \neq j} z_i \subseteq Q$ as $Q$ is a strong $\mathcal{C}$-hyperideal of $A$. Since $w \circ \Pi_{i \neq j} z_i  \subseteq Q$, we obtain  $x \circ \Pi_{i \neq j} z_i \subseteq Q$ which means $Q_j \circ \Pi_{i \neq j} z_i \subseteq Q$.
\end{proof}

 Let $Q$ be a $u$-absorbing hyperideal of $A$ for $u \in \mathbb{N}$.  We define $Abs(Q)=\min \{v \ \vert \ Q \ \text{is a $v$-absorbing hyperideal of } A \}$, otherwise $Abs(Q) = \infty$ \cite{anb4}. Moreover, we define $abs(Q)=\min \{v \ \vert \ Q \ \text{is an $(Abs(Q)+1,v)$-absorbing hyperideal of } A \}$. It is clear that  $abs(Q) \leq Abs(Q)$. Thus, in Proposition \ref{t2}, we get $abs(\cap_{i=1}^k) \leq abs(Q_1)+\cdots+abs(Q_k)$ and $Abs(\cap_{i=1}^k Q_i) \leq \max\{Abs(Q_1),\cdots,Abs(Q_k),abs(Q_1)+\cdots+abs(Q_k)\}$. Also, in Proposition \ref{t3}, we have $abs(rad(Q)) \leq abs(Q)$ and $Abs(rad(Q)) \leq \max\{\lceil \frac{Abs(Q)+1}{abs(Q)} \rceil-1,abs(Q)\}.$ Recall from \cite{ameri} that the hyperideals $I$ and $J$ are coprime if $I+J=A$.
 \begin{theorem} \label{t9}
Assume that $Q$ is a $(u,v)$-absorbing strong $\mathcal{C}$-hyperideal of $A$ for $u,v \in \mathbb{N}$ with $2 \leq v < u$ and $Q_1,\cdots,Q_v$ are the only minimal prime $\mathcal{C}$-hyperideals over $Q$. Then 
\begin{itemize}
\item[\rm{(i)}]~ $Q_1 \circ \cdots \circ Q_v \subseteq Q$ and $abs(Q)=v$. 
\item[\rm{(ii)}]~ If $Q_1,\cdots,Q_v$ are pairwise coprime, then $Q_1 \circ \cdots \circ Q_v =Q$ and $abs(Q)=v$. 
 \end{itemize}
 \end{theorem}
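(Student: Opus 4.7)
The plan for (i) is to first prove $Q_1 \circ \cdots \circ Q_v \subseteq Q$ and then read off $abs(Q) = v$ from it, while for (ii) the extra hypothesis supplies the reverse inclusion via a Chinese-remainder identity. To prove the containment I would fix arbitrary $x_i \in Q_i$ and show $x_1 \circ \cdots \circ x_v \subseteq Q$ in two stages, echoing Theorem \ref{t8}. In the first stage, suppose $x_i \in Q_i \setminus \bigcup_{j \neq i} Q_j$ for every $i$. Then $x_1 \circ \cdots \circ x_v \subseteq \bigcap_{i=1}^v Q_i = rad(Q)$, so Proposition \ref{t3}(i) gives $a^v \subseteq Q$ for each element $a$ of the hyperproduct, and a single absorption step extends this to $a^k \subseteq Q$ for every $k \geq v$. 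Choosing $k$ large enough that $kv \geq u$ and using $(x_1 \circ \cdots \circ x_v)^k \subseteq x_1^k \circ \cdots \circ x_v^k$, the $\mathcal{C}$-hyperideal property forces $x_1^k \circ \cdots \circ x_v^k \subseteq Q$, and Lemma \ref{t7} then delivers $x_1 \circ \cdots \circ x_v \subseteq Q$.

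In the second stage I would reduce to Stage 1 by the perturbation trick of Theorem \ref{t8}: for each bad index $i$ with $x_i \in Q_i \cap \bigcup_{j \neq i} Q_j$, Corollary 3.12 of \cite{anb4} produces $y \in A$ and $z \in Q_i \setminus \bigcup_{j \neq i} Q_j$ such that $w + x_i \in Q_i \setminus \bigcup_{j \neq i} Q_j$ for every $w \in y \circ z$. Replacing $x_i$ by $w + x_i$ moves it into general position, and expanding $(w + x_i) \circ \prod_{k \neq i} x_k \subseteq w \circ \prod_{k \neq i} x_k + x_i \circ \prod_{k \neq i} x_k$ via weak distributivity, the strong $\mathcal{C}$-hyperideal axiom converts "the sum meets $Q$" into "the sum lies in $Q$", after which the $w$-summand is discharged by an inductive application of the same argument. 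The main technical obstacle is the bookkeeping of this induction, which one must set up carefully (for instance on the total number of bad indices, arguing in addition that the helper element $w$ can be chosen in general position using the primeness of the $Q_k$) so that each inductive step strictly simplifies the data before the strong $\mathcal{C}$-hyperideal axiom is invoked to peel off the desired summand.

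With $Q_1 \circ \cdots \circ Q_v \subseteq Q$ in hand, $abs(Q) = v$ is extracted as follows. For $abs(Q) \geq v$ I take $z_i \in Q_i \setminus \bigcup_{j \neq i} Q_j$, pad with repetitions of some $z_i$ up to length $Abs(Q)+1$, and observe that any choice of $v-1$ of these factors must omit some $Q_\ell$ altogether and hence lies outside $rad(Q) \supseteq Q$. For $abs(Q) \leq v$ I take $x_1 \circ \cdots \circ x_{Abs(Q)+1} \subseteq Q$, use primeness of each $Q_i$ to select $j_i$ with $x_{j_i} \in Q_i$, run a Hall-type marriage argument (exploiting that the factors outside $\bigcup_{i \in S} Q_i$ have product outside every $Q_i$, $i \in S$) to force the $j_i$ pairwise distinct, and conclude $x_{j_1} \circ \cdots \circ x_{j_v} \subseteq Q_1 \circ \cdots \circ Q_v \subseteq Q$. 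Finally, for (ii) pairwise coprimality yields the Chinese-remainder identity $\bigcap_{i=1}^v Q_i = Q_1 \circ \cdots \circ Q_v$: from $Q_i + Q_j = A$, write $1 = \alpha + \beta$ with $\alpha \in Q_i$, $\beta \in Q_j$; for $a \in Q_i \cap Q_j$, distributivity gives $a \in a \circ 1 \subseteq a \circ \alpha + a \circ \beta \subseteq Q_j \circ Q_i + Q_i \circ Q_j = Q_i \circ Q_j$, and one iterates to $v$ factors. Since $Q \subseteq \bigcap_i Q_i = Q_1 \circ \cdots \circ Q_v$, combining with (i) gives $Q = Q_1 \circ \cdots \circ Q_v$, and $abs(Q) = v$ is inherited verbatim from (i).
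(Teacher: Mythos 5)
The paper itself offers no argument here beyond a pointer to Theorems 3.13--3.14 of \cite{anb4}, so your attempt has to be judged against the machinery the paper has already built (Proposition \ref{t3}, Lemma \ref{t7}, Theorem \ref{t8}). Your proof of the containment $Q_1 \circ \cdots \circ Q_v \subseteq Q$ is the right route and is essentially sound: Stage 1 is exactly the Proposition \ref{t3}(i) $+$ Lemma \ref{t7} combination the paper uses inside Theorem \ref{t8}, and Stage 2 is the same perturbation-and-peeling argument as Theorem \ref{t8}, iterated by induction on the number of coordinates not in general position (note that $w \in y \circ z$ with $z$ in general position already makes the $w$-summand land in $Q$ by the inductive hypothesis, so the extra worry about choosing $w$ itself in general position is unnecessary). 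The lower bound $abs(Q) \geq v$ by padding $z_1, \dots, z_v$ is also correct, modulo the small check that $Abs(Q)+1 \geq v$ (which follows since no $(v-1)$-subproduct of $z_1 \circ \cdots \circ z_v$ lies in $Q$, so $Q$ is not $(v-1)$-absorbing). Part (ii) is routine once (i) is in hand.

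The genuine gap is in the upper bound $abs(Q) \leq v$. Your plan is to extract a system of \emph{distinct} representatives $j_1, \dots, j_v$ with $x_{j_i} \in Q_i$ and then invoke $Q_1 \circ \cdots \circ Q_v \subseteq Q$, justifying Hall's condition by the remark that factors outside $\bigcup_{i \in S} Q_i$ have product outside each $Q_i$, $i \in S$. That remark does not imply Hall's condition, and Hall's condition can simply fail: with $v=2$, $Q_1, Q_2$ the two minimal primes and a product $x_1 \circ x_2 \circ x_3 \subseteq Q$ in which $x_1, x_2 \notin Q_1 \cup Q_2$ while $x_3 \in Q_1 \cap Q_2$, one has $S_1 = S_2 = \{3\}$, so no SDR exists (in the model case $Q = \langle pq \rangle$, $x_1 = x_2 = r$, $x_3 \in p \circ q$, the desired conclusion still holds, but via $x_1 \circ x_3 \subseteq Q$, i.e.\ through a factor lying in $Q$ itself, not through an SDR). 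So the marriage argument as stated does not prove that $Q$ is $(Abs(Q)+1, v)$-absorbing; the degenerate case in which a single factor lies in several of the $Q_i$ (equivalently, when the sets $S_i$ overlap too much) needs a separate argument, and it is not a one-line fix --- you cannot simply fall back on the $(u,v)$-absorbing hypothesis either, since $Abs(Q)+1$ may be strictly smaller than $u$ and padding with repeated factors only returns subproducts with repeated entries. This step needs to be reworked before the proof of $abs(Q)=v$ is complete.
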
 
 \begin{proof}
 It can be easily seen that the claim is true in a similar manner to the proofs of
Theorem 3.13 and Theorem 3.14 in \cite{anb4}.
 \end{proof}
 \begin{theorem} \label{t10}
 Assume that $Q$ is a radical $\mathcal{C}$-hyperideal of $A$. Then $Q$ is a $(u,v)$-absorbing hyperideal of $A$ if and only if $Q$ is a $v$-absorbing hyperideal of $A$. 
 \end{theorem}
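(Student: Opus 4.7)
For the reverse direction, observe that $v$-absorbing and $(v+1,v)$-absorbing are equivalent: if $x_1\circ\cdots\circ x_{v+1}\subseteq Q$ and some $x_i\in U(A)$, inverting $x_i$ immediately yields a $v$-product of the remaining elements inside $Q$, so the case with a unit factor reduces to the non-unit case. Hence $v$-absorbing gives $(v+1,v)$-absorbing, and Remark \ref{t1}(i)(1) then upgrades this to $(u,v)$-absorbing for every $u>v$.

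The forward direction is the substantive one. Let $Q$ be a $(u,v)$-absorbing radical $\mathcal{C}$-hyperideal and suppose $x_1\circ\cdots\circ x_{v+1}\subseteq Q$. The unit case is again handled by inversion, so I may assume each $x_i\in A\setminus U(A)$. By the hyperideal axiom I enlarge the product by repeating $x_1$, obtaining $x_1^{u-v}\circ x_2\circ\cdots\circ x_{v+1}\subseteq Q$, a product of $u$ non-unit factors. The $(u,v)$-absorbing hypothesis then singles out $v$ of these factors whose product lies in $Q$; writing $a$ for the multiplicity of $x_1$ in the chosen subproduct and $S\subseteq\{2,\ldots,v+1\}$ for the indices of the remaining chosen factors, this reads $x_1^{a}\circ\prod_{i\in S}x_i\subseteq Q$ with $a+\vert S\vert=v$.

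If $a=0$ we already have a $v$-subproduct of the original $x_i$'s in $Q$. If $a\geq 1$, I invoke the radical hypothesis to collapse the power of $x_1$ down to $1$: for any $b\in x_1\circ\prod_{i\in S}x_i$ one has $b^a\subseteq x_1^a\circ\prod_{i\in S}x_i^a$, and multiplying the containment $x_1^a\circ\prod_{i\in S}x_i\subseteq Q$ by $\prod_{i\in S}x_i^{a-1}$ via the hyperideal axiom yields $x_1^a\circ\prod_{i\in S}x_i^a\subseteq Q$; so $b^a\subseteq Q$, and the $\mathcal{C}$-hyperideal description of the radical forces $b\in rad(Q)=Q$. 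Therefore $x_1\circ\prod_{i\in S}x_i\subseteq Q$, a product of $v-a+1$ factors; padding with any $a-1$ elements drawn from $\{x_2,\ldots,x_{v+1}\}\setminus S$ (a set of exactly $v-(v-a)=a$ elements, so there is room) produces the desired $v$-product of the original $x_i$'s in $Q$.

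The main subtlety is the bookkeeping in that last step. The $(u,v)$-absorbing property may return a subproduct in which $x_1$ appears with multiplicity $a>1$, which is a priori weaker than what $v$-absorbing demands, and the radical hypothesis is exactly what lets me trade the $a-1$ extra copies of $x_1$ for $a-1$ of the originally unused elements $x_j$ with $j\in\{2,\ldots,v+1\}\setminus S$; the count balances because $\vert\{2,\ldots,v+1\}\setminus S\vert=a$.
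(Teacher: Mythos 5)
Your proof is correct. In the forward direction it rests on the same two ingredients as the paper's proof --- inflate the given product by repeating a factor so that the $(u,v)$-absorbing hypothesis applies, then use the radical hypothesis together with the $\mathcal{C}$-hyperideal description of $rad(Q)$ to strip the resulting power back down to a single factor --- but you organize them differently. The paper descends one level at a time: it squares one factor to pass from $(u,v)$-absorbing to $(u-1,v)$-absorbing (the only bad case being that both copies of the squared factor are selected, handled by the radical hypothesis) and iterates until it reaches $(v+1,v)$. You instead pad with all $u-v-1$ extra copies of $x_1$ at once and apply the hypothesis a single time, at the price of handling an arbitrary multiplicity $a$ of $x_1$ in the selected subproduct; your computation $b^a\subseteq x_1^a\circ\prod_{i\in S}x_i^a\subseteq Q$ and the count $\vert\{2,\ldots,v+1\}\setminus S\vert=a$ are exactly what is needed to close that case, so the one-shot version is sound and avoids the induction. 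Your reverse direction (restrict to non-units to get $(v+1,v)$-absorbing, then invoke Remark \ref{t1}(i)(1)) is cleaner than the paper's direct manipulation with an element $c\in(x_{i_1}\circ\cdots\circ x_{i_{u-v}})\setminus Q$, and, like the paper's, it correctly makes no use of the radical hypothesis. One cosmetic point: to get $b\in rad(Q)=Q$ from $b^a\subseteq Q$ you only need the inclusion $\{x\in A \ \vert \ x^n\subseteq Q \ \text{for some}\ n\}\subseteq rad(Q)$, which holds for every hyperideal; the $\mathcal{C}$-hyperideal hypothesis is not actually needed at that particular step.
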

 \begin{proof}
 ($\Longrightarrow$) Suppose that $Q$ is a $(u,v)$-absorbing hyperideal of $A$ and $x_1 \circ \cdots \circ x_{u-1} \subseteq Q$ for $x_1,\cdots,x_{u-1} \in A \backslash U(A)$. Since $x_1 \circ \cdots \circ x_{k-1} \circ x_k^2 \circ x_{k+1} \circ \cdots \circ x_{u-1} \subseteq Q$, we conclude that either $x_{i_1} \circ \cdots \circ x_{i_v} \subseteq Q$ for some $ i_j \in \{1,\cdots,k-1,k,k+1,\cdots,u-1\}$ which implies $Q$ is a $(u-1,v)$-absorbing hyperideal of $A$ or $x_k^2 \circ x_{i_1} \circ   \cdots \circ x_{i_{v-2}} \subseteq Q$ for some $ i_j \in \{1,\cdots,k-1,k+1,\cdots,u-1\}$ which means $x_k \circ x_{i_1} \circ   \cdots \circ x_{i_{v-2}} \subseteq rad(Q)$. Since $Q$ is a radical hyperideal, we get $x_k \circ x_{i_1} \circ   \cdots \circ x_{i_{v-2}} \subseteq Q$ which implies  $Q$ is a $(u-1,v)$-absorbing hyperideal of $A$.  By continuing this process, we conclude $Q$ is a $(v+1,v)$-absorbing hyperideal and so it is a $v$-absorbing hyperideal of $A$ by Remark \ref{t1}(i).
 
 ($\Longleftarrow$) Let $Q$ is a $v$-absorbing hyperideal of $A$ and $x_1 \circ \cdots \circ  x_u \subseteq Q$ for $x_1,\cdots,x_{u-1} \in A \backslash U(A)$. Take any $c \in (x_{i_1} \circ \cdots \circ x_{i_{u-v}}) \backslash Q$ such that $i_j \in \{1,\cdots, u\}$. Then a product of $v$ of the $x_i^,$s othere than the $x_{i_j}$ with $c$ is a subset of $Q$ and so a product of $v$ of the $x_i^,$s othere than the $x_{i_j}$ is a subset of $Q$ as  $Q$ is a $v$-absorbing hyperideal of $A$. Thus $Q$ is a $(u,v)$-absorbing hyperideal of $A$.
 \end{proof}
 Let us give the following theorem without proof as a result of Theorem \ref{t10}.
 \begin{corollary}
Let  $Q$ be   a $(u,v)$-absorbing radical $\mathcal{C}$-hyperideal of $A$. Then $abs (Q)=Abs(Q)$.
 \end{corollary}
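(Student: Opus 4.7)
The plan is to derive the result directly from Theorem \ref{t10}, since that theorem already collapses the two quantifiers appearing in the definitions of $Abs(Q)$ and $abs(Q)$ whenever $Q$ is a radical $\mathcal{C}$-hyperideal. First I would unpack the definitions: by construction $Q$ is an $(Abs(Q)+1, abs(Q))$-absorbing hyperideal of $A$, and we already know the a priori inequality $abs(Q) \leq Abs(Q)$, which in particular means $Abs(Q)+1 > abs(Q)$ so that the pair $(Abs(Q)+1, abs(Q))$ is admissible in the sense of the definition of $(u,v)$-absorbing.

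Next I would invoke Theorem \ref{t10} with $u=Abs(Q)+1$ and $v=abs(Q)$. Since $Q$ is assumed to be a radical $\mathcal{C}$-hyperideal, the equivalence stated there applies, and the $(u,v)$-absorbing property of $Q$ upgrades to the plain $v$-absorbing property, i.e.\ $Q$ is $abs(Q)$-absorbing.

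Finally, the minimality built into the definition of $Abs(Q)$ forces $Abs(Q) \leq abs(Q)$. Combining this with the opposite inequality $abs(Q) \leq Abs(Q)$ that holds for every $u$-absorbing hyperideal yields $abs(Q) = Abs(Q)$, as required. There is no real obstacle here beyond verifying the admissibility condition $u>v$ that lets us cite Theorem \ref{t10}; the proof is essentially a one-line consequence of that theorem together with the minimality in the two definitions.
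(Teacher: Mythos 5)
Your proposal is correct and is exactly the argument the paper intends: the corollary is stated "without proof as a result of Theorem \ref{t10}", and your write-up simply fleshes out that derivation — apply Theorem \ref{t10} with $u=Abs(Q)+1$, $v=abs(Q)$ (admissible since $abs(Q)\leq Abs(Q)$) to see that $Q$ is $abs(Q)$-absorbing, then use minimality of $Abs(Q)$ to get $Abs(Q)\leq abs(Q)$. No issues.
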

 Recall from \cite{das} that a proper hyperideal $Q$ of $A$ is primary if for any $x,y \in A$, $x \circ y \subseteq Q$ implies that $x \in Q$ or $y^t \subseteq Q$ for some $t \in \mathbb{N}$. Since the radical of a primary $\mathcal{C}$-hyperideal $Q$  is a prime hyperideal of $A$, say $P$,  $Q$ is referred to as a $P$-primary $\mathcal{C}$-hyperideal of $A$.
 \begin{theorem}
 Assume that $u,v \in \mathbb{N}$ and $Q$ is a $P$-primary $\mathcal{C}$-hyperideal of $A$ where $P$ is a prime hyperideal of $A$ with $P^v \subseteq Q$. Then $Q$ is a $(u,v)$-absorbing hyperideal of $A$ for any $u \geq v+1$. Furthermore, if $P^v$ is a $P$-primary $\mathcal{C}$-hyperideal of $A$, then $P^v$ is a $(u,v)$-absorbing hyperideal of $A$ for any $u \geq v+1$ and $abs(P^v)=v$.
 \end{theorem}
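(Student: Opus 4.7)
The plan is to prove that $Q$ is $(v+1,v)$-absorbing; Remark \ref{t1}(i)(1) then automatically upgrades this to the $(u,v)$-absorbing condition for every $u \geq v+1$, settling the first assertion. The key inputs are that $rad(Q) = P$ is prime, the $\mathcal{C}$-hyperideal condition (so $rad(Q) = \{a \in A \mid a^n \subseteq Q \text{ for some } n\}$), and the $P$-primary condition $x \circ y \subseteq Q \Rightarrow x \in Q$ or $y^t \subseteq Q$.

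To prove the $(v+1,v)$-absorbing property, I would take $x_1, \ldots, x_{v+1} \in A \setminus U(A)$ with $x_1 \circ \cdots \circ x_{v+1} \subseteq Q$ and split into two cases depending on whether all of the $x_i$ lie in $P$. If every $x_i \in P$, then $x_1 \circ \cdots \circ x_v \subseteq P^v \subseteq Q$ by the standing hypothesis $P^v \subseteq Q$, so we are done. Otherwise, some $x_j \notin P$; up to relabeling, take $j = v+1$. For every element $z \in x_1 \circ \cdots \circ x_v$ we have $z \circ x_{v+1} \subseteq x_1 \circ \cdots \circ x_{v+1} \subseteq Q$. Applying the $P$-primary definition to the pair $(z, x_{v+1})$ yields either $z \in Q$ or $x_{v+1}^t \subseteq Q$ for some $t \in \mathbb{N}$. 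The latter would force $x_{v+1} \in rad(Q) = P$, contradicting the choice of $x_{v+1}$. Hence $z \in Q$ for every such $z$, so $x_1 \circ \cdots \circ x_v \subseteq Q$, as required.

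For the \emph{Furthermore} clause, it suffices to apply the statement just proved with $Q$ replaced by $P^v$: the hypothesis $P^v \subseteq Q$ becomes trivial since $P^v \subseteq P^v$, and the conclusion delivers the $(u,v)$-absorbing property of $P^v$ for every $u \geq v+1$. This immediately gives $abs(P^v) \leq v$. The reverse inequality $abs(P^v) \geq v$ is the step I expect to be the main obstacle: it requires exhibiting witnesses showing $P^v$ is not $(Abs(P^v)+1,v-1)$-absorbing. I would handle this by choosing elements $a_1, \ldots, a_{v+1} \in P \setminus U(A)$ so that $a_1 \circ \cdots \circ a_{v+1} \subseteq P^v$ but no product of $v-1$ of them lies in $P^v$, arguing that otherwise the $P$-primary property of $P^v$ together with $a_i \in P$ would collapse $P$ through its own powers in a way forbidden by the strict chain $P^{v-1} \supsetneq P^v$; the subtlety lies in guaranteeing this strictness from the given hypotheses, and would be the place where I would need to lean most carefully on the $\mathcal{C}$-hyperideal structure and primeness of $P$.
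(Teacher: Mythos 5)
Your proof of the first assertion, and of the bound $abs(P^v)\le v$, is correct and is considerably more explicit than what the paper offers: the paper proves nothing locally and simply writes that the result ``follows from Theorem 4.1 in \cite{anb4}''. Your reduction to the $(v+1,v)$ case via Remark \ref{t1}(i)(1), the dichotomy on whether every $x_i$ lies in $P$, and the use of the primary condition together with $rad(Q)=\{a\in A\mid a^n\subseteq Q\ \text{for some}\ n\in\mathbb{N}\}=P$ (this is where the $\mathcal{C}$-hyperideal hypothesis is genuinely needed) to exclude $x_{v+1}^t\subseteq Q$ are all sound; this is the natural hyperring transcription of the Anderson--Badawi argument, and the associativity step $z\circ x_{v+1}\subseteq x_1\circ\cdots\circ x_{v+1}$ for $z\in x_1\circ\cdots\circ x_v$ is legitimate.

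The step you flag as the main obstacle is a genuine gap, and it is not one you could have closed, because the equality $abs(P^v)=v$ does not follow from the stated hypotheses. Since $P^v$ is $(v+1,v)$-absorbing it is $v$-absorbing by Remark \ref{t1}(i)(2), so $Abs(P^v)\le v$; as $abs(P^v)\le Abs(P^v)$, the claimed equality would force $Abs(P^v)=v$ and would require witnesses $x_1,\dots,x_{v+1}\in A\setminus U(A)$ with $x_1\circ\cdots\circ x_{v+1}\subseteq P^v$ but with no product of $v-1$ of them contained in $P^v$. Producing such witnesses needs at least $P^{v}\subsetneq P^{v-1}$, and nothing in the hypotheses guarantees this: if $P$ is an idempotent prime hyperideal (for instance the zero hyperideal of $(\mathbb{Z}_T,+,\circ)$ with $T=\{2,4\}$, which is prime and satisfies $P\circ P=P$), then $P^v=P$ is prime, hence $P$-primary and $1$-absorbing, so $abs(P^v)=1\neq v$ for $v\ge 2$. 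The conclusion $abs(P^v)=v$ therefore requires an additional hypothesis such as a strictly descending chain $P\supsetneq P^2\supsetneq\cdots\supsetneq P^v$, with the witnesses drawn from the successive differences; the paper's one-line citation of \cite{anb4} does not supply this, so your decision to stop and flag the step rather than assert the strictness was the right one.
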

 \begin{proof}
 The proof follows from Theorem 4.1 in \cite{anb4}.
 \end{proof}
\section{$AB$-$(u,v)$-absorbing hyperideals}
This section is devoted to the study of  a subclass of the $(u,v)$-absorbing hyperideals called $AB$-$(u,v)$-absorbing hyperideals.
\begin{definition}
Let $u,v \in \mathbb{N}$ with $u >v$. A proper hyperideal $Q$ of $A$  is said to be  an   $AB$-$(u,v)$-absorbing  hyperideal  if whenever    $x_1 \circ \cdots \circ x_u \subseteq Q$ for $x_1,\cdots, x_u \in A$, then there are $v$ of the $x_i^,$s whose product is a subset of  $Q$.
\end{definition}
\begin{theorem}
Let $Q$ be a proper strong $\mathcal{C}$-hyperideal of $A$ such that $a+1 \notin U(A)$ for some $a \in Q$ and $u,v \in \mathbb{N}$ with $u >v$. Then $Q$ is an $AB$-$(u,v)$-absorbing  hyperideal if and only if $Q$ is a $(u,v)$-absorbing  hyperideal.
\end{theorem}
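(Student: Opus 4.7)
One direction is immediate: the $AB$-$(u,v)$-absorbing condition allows the $x_i$'s to range over all of $A$, so it a fortiori holds when each $x_i \in A \setminus U(A)$, which is the $(u,v)$-absorbing property. For the non-trivial direction, assume $Q$ is $(u,v)$-absorbing and take an arbitrary hyperproduct $x_1 \circ \cdots \circ x_u \subseteq Q$ with $x_i \in A$. After relabeling, suppose $x_1,\ldots,x_k \in U(A)$ and $x_{k+1},\ldots,x_u \in A \setminus U(A)$; if $k = 0$ we are done, so assume $k \ge 1$.

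The first step is to peel off the units. If $1 \in x_i \circ y_i$, then $x_{i+1} \circ \cdots \circ x_u \subseteq 1 \circ x_{i+1} \circ \cdots \circ x_u \subseteq y_i \circ (x_i \circ \cdots \circ x_u) \subseteq y_i \circ Q \subseteq Q$, so iterating through $i = 1,\ldots,k$ yields $x_{k+1} \circ \cdots \circ x_u \subseteq Q$. I then pad back up to $u$ factors using the non-unit $1+a$: since $Q$ is a hyperideal, $(1+a)^k \circ x_{k+1} \circ \cdots \circ x_u \subseteq Q$ is a hyperproduct of $u$ non-units. The $(u,v)$-absorbing hypothesis now produces some $s$ with $0 \le s \le \min(k,v)$ and indices $i_1,\ldots,i_{v-s} \in \{k+1,\ldots,u\}$ such that $(1+a)^s \circ x_{i_1} \circ \cdots \circ x_{i_{v-s}} \subseteq Q$.

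The crux of the argument is the following lemma: if $w \in \mathcal{C}$, $a \in Q$, and $(1+a) \circ w \subseteq Q$, then $w \subseteq Q$; iterating this $s$ times yields $w \subseteq Q$ from $(1+a)^s \circ w \subseteq Q$. To establish the lemma, note that distributivity gives $(1+a) \circ w \subseteq 1 \circ w + a \circ w$; since $a \circ w \subseteq Q$, every $z \in (1+a) \circ w \subseteq Q$ can be written $z = p + q$ with $p \in 1 \circ w$ and $q \in a \circ w \subseteq Q$, forcing $p = z - q \in Q$. Hence $(1 \circ w) \cap Q \neq \varnothing$; since $1 \circ w \in \mathcal{C}$ and $Q$ is a $\mathcal{C}$-hyperideal (being strong $\mathcal{C}$), we conclude $1 \circ w \subseteq Q$, and the identity property $w \subseteq 1 \circ w$ yields $w \subseteq Q$.

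Applying the lemma to $w = x_{i_1} \circ \cdots \circ x_{i_{v-s}}$ when $s \ge 1$ (the case $s = 0$ is immediate) gives $x_{i_1} \circ \cdots \circ x_{i_{v-s}} \subseteq Q$, a product of $v-s$ of the original $x_i$'s. Extending by any $s$ further distinct indices from $\{1,\ldots,u\}$ (possible since $u \ge v$) preserves containment in $Q$ by the hyperideal property, yielding a product of $v$ of the $x_i$'s inside $Q$ and completing the proof. The main obstacle is the lemma: after padding and invoking $(u,v)$-absorbing, the extracted $v$-subproduct may contain occurrences of $1+a$ rather than original $x_i$'s, and it is the combination of the hypothesis $1+a \notin U(A)$ (so the padded factors are genuinely non-units) with the $\mathcal{C}$-hyperideal structure (allowing us to upgrade $(1 \circ w) \cap Q \neq \varnothing$ to $1 \circ w \subseteq Q$) that lets us invert the distributivity step and recover a statement purely about the $x_{i_j}$'s.
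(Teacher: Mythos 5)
Your proof is correct and follows the same overall strategy as the paper's: discard the unit factors, pad the remaining product with copies of the non-unit $1+a$, invoke the $(u,v)$-absorbing hypothesis, and then cancel the copies of $1+a$ from the selected $v$-subproduct. Where you differ is in the cancellation step, and your version is the cleaner of the two: the paper expands $(a+1)^{v-h}$ binomially as $X+1$ with $X \subseteq Q$ and needs the \emph{strong} $\mathcal{C}$-hyperideal property to split the resulting sum of products, whereas your lemma peels off one factor of $1+a$ at a time using only elementwise distributivity, subtraction in $(A,+)$, and the plain $\mathcal{C}$-hyperideal property to upgrade $(1 \circ w) \cap Q \neq \varnothing$ to $1 \circ w \subseteq Q$. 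The one point you should patch is the degenerate case $s=v$ (which can occur when $k \geq v$): there the selected $v$-subproduct consists entirely of copies of $1+a$, your $w$ is an empty product, and the lemma as stated does not apply. The paper handles this by showing $(a+1)^n \nsubseteq Q$ for all $n\geq 1$; in your framework the same follows by iterating your lemma down from $(1+a)^v \subseteq Q$ to $1+a \in Q$, whence $1=(1+a)-a \in Q$ and $Q=A$, contradicting properness. With that one line added, the argument is complete.
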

\begin{proof}
$(\Longrightarrow)$ It is obvious.

$(\Longleftarrow)$ Assume that $x_1 \circ \cdots \circ x_u \subseteq Q$ for $x_1,\cdots, x_u \in A$. Let $x_1,\cdots, x_{s-1}$ be non-unit elememts in $A$ and  $x_s,  \cdots, x_u$ be unit elements in $A$ for some $s \geq v+2$. Since $x_1 \circ \cdots \circ x_u \subseteq Q$ and $x_s,  \cdots, x_u$ are units in $A$, we get $x_1 \circ \cdots,\circ x_s \subseteq Q$. Put $y_i=1+a$ for each $i \in \{1,\cdots,s\}$. Then the $y_i^,$s are non-unit. Since $Q$ is a $(u,v)$-absorbing  hyperideal and $x_1 \circ \cdots \circ x_{s-1} \circ y_s \circ \cdots \circ y_u \subseteq Q$, we conclude that there exist $v$ elements of the $x_i^,$ and the $y_j^,$s whose product is a subset of $Q$. Take any $n \in \mathbb{N}$. Assume that $(a+1)^n \subseteq Q$. This implies that $(\Sigma_{k=0}^n \dbinom{n}{k} a^{n-k} \circ 1^k) \cap Q \neq \varnothing $. Since $Q$ is a  strong $\mathcal{C}$-hyperideal of $A$, we have $(\Sigma_{k=0}^n \dbinom{n}{k} a^{n-k} \circ 1^k) \subseteq Q$ and so $1 \in Q$, a contradiction. Then $(a+1)^n \nsubseteq Q$ for all $n \geq 1$. Thus $x_{i_1} \circ \cdots \circ x_{i_h} \circ (a+1)^{v-h} \subseteq Q$ for some $h \in \{1,\cdots,v\}$ and $i_j \in \{1,\cdots,s-1\}$. Clearly, $\Sigma_{k=0}^{v-h} \dbinom{v-h}{k} a^{v-h-k} \circ 1^k=X+1$ where $X \subseteq Q$. Since $x_{i_1} \circ \cdots \circ x_{i_h} \circ (X+1) \subseteq Q \cap  (x_{i_1} \circ \cdots \circ x_{i_h} \circ X)+(x_{i_1} \circ \cdots \circ x_{i_h} \circ 1)$, we obtain  $(x_{i_1} \circ \cdots \circ x_{i_h}\circ X)+(x_{i_1} \circ \cdots \circ x_{i_h} \circ 1) \subseteq Q$. Hence we get the result that $x_{i_1} \circ \cdots \circ x_{i_h}  \subseteq Q$ which implies $Q$ is an $AB$-$(u,v)$-absorbing  hyperideal of $A$.
\end{proof}
As a result of the previous  theorems, we give the following result.
\begin{corollary}
Let $Q$ be a proper strong $\mathcal{C}$-hyperideal of $A$ such that $Q \nsubseteq J(A)$ and $u,v \in \mathbb{N}$ with $u >v$. Then $Q$ is an $AB$-$(u,v)$-absorbing  hyperideal if and only if $Q$ is a $(u,v)$-absorbing  hyperideal.
\end{corollary}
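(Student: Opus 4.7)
The plan is to deduce the corollary as a direct consequence of the preceding theorem by showing that the hypothesis $Q \nsubseteq J(A)$ supplies an element $a \in Q$ with $a + 1 \notin U(A)$; once such an $a$ is in hand, the earlier theorem applies verbatim and yields the desired equivalence. Concretely, I would pick any $q \in Q \setminus J(A)$, which exists because $Q$ is not contained in the intersection of the maximal hyperideals. There is then a maximal hyperideal $M$ of $A$ with $q \notin M$, so by maximality of $M$ the hyperideal generated by $M \cup \{q\}$ equals $A$.

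Using the distributivity $x \circ (y+z) \subseteq x \circ y + x \circ z$ together with the presence of the identity $1$, the principal hyperideal $\langle q \rangle$ can be described as the additive closure of $\bigcup_{r \in A} r \circ q$, and it is contained in $Q$ since $q \in Q$ and $Q$ is a hyperideal. Hence $M + \langle q \rangle = A$, so there exist $m \in M$ and $b \in \langle q \rangle \subseteq Q$ with $1 = m + b$, i.e., $1 - b = m \in M$. Now $1-b$ cannot be a unit: if some $y \in A$ satisfied $1 \in (1-b) \circ y$, then the hyperideal property would give $1 \in (1-b) \circ y \subseteq M$, contradicting properness of $M$. Setting $a := -b$, we have $a \in Q$ and $a + 1 = 1 - b \notin U(A)$, exactly the hypothesis of the previous theorem.

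The main obstacle is the Jacobson radical step: one has to replace the classical equality $Rx = \{rx : r \in R\}$ with the additive closure of $\bigcup_{r \in A} r \circ q$, and verify, using the one-sided distributivity and property (4) of the multiplicative hyperring, that this additive closure really is a hyperideal and lies inside $Q$. Once that verification is in place, invoking the previous theorem with the element $a$ constructed above closes the argument.
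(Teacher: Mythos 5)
Your proposal is correct and follows exactly the route the paper intends: the corollary is stated as an immediate consequence of the preceding theorem, and the only missing step is precisely the one you supply, namely that $Q \nsubseteq J(A)$ produces a maximal hyperideal $M$ with $M + \langle q \rangle = A$ for some $q \in Q$, hence $1 = m + b$ with $b \in Q$ and $a := -b \in Q$ satisfying $a+1 = m \notin U(A)$. Your verification that $1-b$ cannot be a unit (since $1 \in (1-b)\circ y \subseteq M$ would force $M = A$) and that $\langle q \rangle \subseteq Q$ are both sound, so the earlier theorem applies as claimed.
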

Let $X$ be  a finite sum of finite products of elements of $A$. Consider the relation $\gamma$ on a multiplicative hyperring $A$ defined as $x \gamma y$ if and only if $\{x,y\} \subseteq X$, namely,  
$x \gamma y $ if and only if    $\{x,y\} \subseteq \sum_{j \in J} \prod_{i \in I_j} z_i$ for some $ z_1, ... , z_n \in A$ and $ I_j, J \subseteq \{1,... , n\}$.  $\gamma^{\ast}$ denotes the transitive closure of $\gamma$. The relation $\gamma^{\ast}$ is the smallest equivalence relation on $A$ such that the set of all equivalence classes, i.e., the
quotient $A/\gamma^{\ast}$,  is a fundamental ring. Assume that $\Sigma$
is the set of all finite sums of products of elements of $A$.  We can rewrite the
definition of $\gamma ^{\ast}$  on $A$, namely, 
$x\gamma^{\ast}y$ if and only if  there exist  $ z_1, ... , z_n \in A$ such that $z_1 = x, z_{n+1 }= y$ and $u_1, ... , u_n \in \Sigma$ where
$\{z_i, z_{i+1}\} \subseteq u_i$ for $1 \leq i \leq n$.
Suppose that $\gamma^{\ast}(x)$ is the equivalence class containing $x \in A$. Define $\gamma ^{\ast}(x) \oplus \gamma^{\ast}(y)=\gamma ^{\ast}(z)$ for every  $z \in \gamma^{\ast}(x) + \gamma ^{\ast}(y)$ and $\gamma ^{\ast}(x) \odot \gamma ^{\ast}(y)=\gamma ^{\ast}(w)$ for every  $w \in  \gamma^{\ast}(x) \circ \gamma ^{\ast}(y)$. Then $(A/\gamma ^{\ast},\oplus,\odot)$ is a ring called a fundamental ring of $A$ \cite{sorc4}.

\begin{theorem}\label{7}
Let $Q$ be a proper hyperideal of $A$. Then  $Q$  is an $AB$-$(u,v)$-absorbing hyperideal of $(A,+,\circ)$ if and only if $Q/\gamma ^{\ast}$ is  an  $AB$-$(u,v)$-absorbing hyperideal  of $(A/\gamma ^{\ast},\oplus,\odot)$. 
\end{theorem}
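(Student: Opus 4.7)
The plan is to use the defining property of the fundamental relation $\gamma^{\ast}$: for any $x_1,\ldots,x_n \in A$, all elements of the hyperproduct $x_1 \circ \cdots \circ x_n$ lie in a common $\gamma^{\ast}$-class, so that $\gamma^{\ast}(x_1) \odot \cdots \odot \gamma^{\ast}(x_n) = \gamma^{\ast}(w)$ for every $w \in x_1 \circ \cdots \circ x_n$. This gives a tight translation between hyperproducts in $A$ and ordinary products in the fundamental ring $A/\gamma^{\ast}$, and the statement will follow by pushing the absorbing condition through this translation in both directions.

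For the forward direction, I would take $\gamma^{\ast}(a_1),\ldots,\gamma^{\ast}(a_u) \in A/\gamma^{\ast}$ with $\gamma^{\ast}(a_1) \odot \cdots \odot \gamma^{\ast}(a_u) \in Q/\gamma^{\ast}$, and aim to produce $v$ of the indices whose associated product of classes is in $Q/\gamma^{\ast}$. The observation above shows that any $w \in a_1 \circ \cdots \circ a_u$ satisfies $\gamma^{\ast}(w) \in Q/\gamma^{\ast}$, i.e., $w \gamma^{\ast} q$ for some $q \in Q$; arguing that $Q$ is saturated under $\gamma^{\ast}$ (as it must be to make $Q/\gamma^{\ast}$ a well-behaved quotient), this forces $w \in Q$, and since $w$ was arbitrary we obtain $a_1 \circ \cdots \circ a_u \subseteq Q$. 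Applying the $AB$-$(u,v)$-absorbing property of $Q$ yields indices $i_1,\ldots,i_v$ with $a_{i_1} \circ \cdots \circ a_{i_v} \subseteq Q$, which pushes forward to $\gamma^{\ast}(a_{i_1}) \odot \cdots \odot \gamma^{\ast}(a_{i_v}) \in Q/\gamma^{\ast}$.

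The reverse direction is essentially symmetric: start with $x_1 \circ \cdots \circ x_u \subseteq Q$, so that any $w$ in this hyperproduct lies in $Q$ and hence $\gamma^{\ast}(x_1) \odot \cdots \odot \gamma^{\ast}(x_u) = \gamma^{\ast}(w) \in Q/\gamma^{\ast}$. The $AB$-$(u,v)$-absorbing hypothesis on $Q/\gamma^{\ast}$ produces indices $i_1,\ldots,i_v$ with $\gamma^{\ast}(x_{i_1}) \odot \cdots \odot \gamma^{\ast}(x_{i_v}) \in Q/\gamma^{\ast}$, and reversing the translation gives an element $z \in x_{i_1} \circ \cdots \circ x_{i_v}$ with $z \gamma^{\ast} q$ for some $q \in Q$, from which, by the same saturation argument, the whole hyperproduct $x_{i_1} \circ \cdots \circ x_{i_v}$ lands in $Q$.

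The main obstacle I expect is precisely the step of converting $\gamma^{\ast}(z) \in Q/\gamma^{\ast}$ into the stronger statement $z \in Q$, since in general a $\gamma^{\ast}$-class need not sit wholly inside $Q$. I would handle this by invoking the implicit $\gamma^{\ast}$-saturation of $Q$ required for the quotient interpretation, or (if needed) by restricting attention to a strong $\mathcal{C}$-hyperideal $Q$, where a chain of finite sums of hyperproducts connecting $z$ to some $q \in Q$ forces each link into $Q$ via the strong $\mathcal{C}$-condition. Once this bridge is in place, both directions follow immediately from the well-definedness of $\odot$ on $A/\gamma^{\ast}$.
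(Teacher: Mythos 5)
Your proposal follows essentially the same route as the paper: translate the hyperproduct through the fundamental relation via $\gamma^{\ast}(a_1)\odot\cdots\odot\gamma^{\ast}(a_u)=\gamma^{\ast}(a_1\circ\cdots\circ a_u)$ and push the absorbing condition back and forth between $Q$ and $Q/\gamma^{\ast}$. The saturation issue you flag --- converting $\gamma^{\ast}(w)\in Q/\gamma^{\ast}$ into $w\in Q$ --- is a genuine subtlety, but the paper's own proof simply asserts this implication without comment, so your treatment is if anything more careful than the published one.
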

\begin{proof}
($\Longrightarrow$) Let $x_1 \odot \cdots \odot x_u \in Q/\gamma ^{\ast}$  for some $x_1,\cdots,x_u \in A/\gamma ^{\ast}$. Then there exist $a_i \in A$   for every $i \in \{1,\cdots,u\}$ such that $x_i =\gamma^{\ast}(a_i)$. This implies  that $x_1\odot \cdots \odot x_u=\gamma^{\ast}(a_1) \odot \cdots \odot \gamma^{\ast}(a_u)=\gamma^{\ast}(a_1 \circ \cdots  \circ a_u)$. Since $
\gamma^{\ast}(a_1 \circ \cdots  \circ a_u) \in Q/\gamma^{\ast}$, we get $ a_1 \circ \cdots  \circ a_u \subseteq  Q$. Since $Q$ is an $AB$-$(u,v)$-absorbing hyperideal of  $A$, we get the result that  there is $v$ elements of the $a_i^,$s, say $a_{i_1},\cdots,a_{i_v}$,  whose product is a subset of $Q$. This implies that  $x_{i_1} \odot \cdots \odot x_{i_v}= \gamma^{\ast}(a_{i_1}) \odot \cdots \odot \gamma^{\ast}(a_{i_v})=\gamma^{\ast}(a_{i_1} \circ \cdots \circ a_{i_v})\in Q/\gamma ^{\ast}$. Consequently,   $Q/\gamma ^{\ast}$ is an $AB$-$(u,v)$-absorbing hyperideal  of $A/\gamma ^{\ast}$.

($\Longleftarrow$) Suppose that $a_1 \circ \cdots  \circ a_u \subseteq Q$ for some $a_1, \cdots, a_u \in A$. Then we have  $\gamma^{\ast}(a_1), \cdots, \gamma^{\ast}(a_u) \in A/\gamma^{\ast} $ and so
$\gamma^{\ast}(a_1) \odot \cdots  \odot\gamma^{\ast}(a_u)= \gamma^{\ast}(a_1 \circ \cdots  \circ a_u) \in Q/\gamma^{\ast}$. Since $Q/\gamma ^{\ast}$ is an $AB$-$(u,v)$-absorbing hyperideal  of  $A/\gamma ^{\ast}$,  there exist $v$ elements of the  $\gamma^{\ast}(a_i)^,$s, say $\gamma^{\ast}(a_{i_1}), \cdots,\gamma^{\ast}(a_{i_v})$ whose product is in  $ Q/\gamma^{\ast}$. This means    $\gamma^{\ast}(a_{i_1} \circ \cdots \circ a_{i_v})=\gamma^{\ast}(a_{i_1}) \odot \circ \odot  \gamma^{\ast}(a_{i_v}) \in Q/\gamma^{\ast}$ which implies  $a_{i_1} \circ \cdots \circ a_{i_v} \subseteq Q$. Thus $Q$ is an $AB$-$(u,v)$-absorbing hyperideal  of  $A$.
\end{proof}

Let $(A_1,+_1,\circ_1)$ and $(A_2,+_2,\circ_2)$ be two multiplicative hyperrings with nonzero identity.  The set $A_1 \times A_2$  with the operation $+$ and the hyperoperation $\circ$  defined  as

$(x_1,x_2)+(y_1,y_2)=(x_1+_1y_1,x_2+_2y_2)$

$(x_1,x_2) \circ (y_1,y_2)=\{(x,y) \in A_1 \times A_2 \ \vert \ x \in x_1 \circ_1 y_1, y \in x_2 \circ_2 y_2\}$ \\
is a multiplicative hyperring \cite{ul}. Now, we present some characterizations of $AB$-$(u,v)$-absorbing hyperideals on cartesian product of commutative multiplicative hyperring.

\begin{theorem}
Assume that  $A=A_1  \times A_2$ such that $A_1, A_2$ are commutative multiplicative hyperrings,  $Q=Q_1 \times A_2$ such that $Q_1$ is   a proper  hyperideal of $A_1$ and $u,v \in \mathbb{N}$ with $u >v$. Then the following are equivalent.
 \begin{itemize}
\item[\rm(i)]~ $Q$ is a  $(u,v)$-absorbing  hyperideal of $A$.
\item[\rm(ii)]~ $Q_1$ is an $AB$-$(u,v)$-absorbing  hyperideal of $A_1$.
\item[\rm(iii)]~ $Q$ is an $AB$-$(u,v)$-absorbing  hyperideal of $A$.
\end{itemize}
\end{theorem}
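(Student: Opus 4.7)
The plan is to establish the cycle $(\mathrm{iii}) \Rightarrow (\mathrm{i}) \Rightarrow (\mathrm{ii}) \Rightarrow (\mathrm{iii})$. The step $(\mathrm{iii}) \Rightarrow (\mathrm{i})$ is immediate from the definitions, since every $AB$-$(u,v)$-absorbing hyperideal is $(u,v)$-absorbing by simply ignoring the non-unit restriction on the inputs. For $(\mathrm{ii}) \Rightarrow (\mathrm{iii})$, I would start from an arbitrary containment $(x_1,y_1) \circ \cdots \circ (x_u,y_u) \subseteq Q = Q_1 \times A_2$ in $A$, project to the first coordinate to get $x_1 \circ_1 \cdots \circ_1 x_u \subseteq Q_1$, apply the $AB$-$(u,v)$-absorbing property of $Q_1$ to extract indices $i_1,\ldots,i_v$ with $x_{i_1} \circ_1 \cdots \circ_1 x_{i_v} \subseteq Q_1$, and then observe $(x_{i_1},y_{i_1}) \circ \cdots \circ (x_{i_v},y_{i_v}) \subseteq Q_1 \times A_2 = Q$.

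The substantive implication is $(\mathrm{i}) \Rightarrow (\mathrm{ii})$. Starting with arbitrary (possibly unit) elements $x_1,\ldots,x_u \in A_1$ satisfying $x_1 \circ_1 \cdots \circ_1 x_u \subseteq Q_1$, the idea is to lift each $x_i$ to the pair $(x_i,0_{A_2}) \in A$. Then the product $(x_1,0_{A_2}) \circ \cdots \circ (x_u,0_{A_2})$ has first coordinate in $Q_1$ and second coordinate in $A_2$, so it is contained in $Q$. Assuming each $(x_i,0_{A_2})$ is a non-unit of $A$ (see below), one invokes (i) to produce indices $i_1,\ldots,i_v$ with $(x_{i_1},0_{A_2}) \circ \cdots \circ (x_{i_v},0_{A_2}) \subseteq Q$, whose first-coordinate projection is the desired $x_{i_1} \circ_1 \cdots \circ_1 x_{i_v} \subseteq Q_1$. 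Since no non-unit assumption was placed on the original $x_i$, this gives the $AB$-$(u,v)$-absorbing property of $Q_1$.

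The only delicate point, and the main obstacle, is the verification that $(x_i,0_{A_2})$ is a non-unit of $A_1 \times A_2$. If it were a unit, there would exist $(y,z) \in A$ with $(1_{A_1},1_{A_2}) \in (x_i,0_{A_2}) \circ (y,z) = (x_i \circ_1 y) \times (0_{A_2} \circ_2 z)$, whence $1_{A_2} \in 0_{A_2} \circ_2 z$; appealing to the standard multiplicative hyperring identity $0 \circ z = \{0\}$ together with $1_{A_2} \neq 0_{A_2}$ produces a contradiction. If one wishes to avoid invoking $0 \circ z = \{0\}$, the argument goes through verbatim with $0_{A_2}$ replaced by any fixed non-unit $c \in A_2$ (which exists, e.g., as any element of a maximal hyperideal of $A_2$). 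With this resolved, the remainder of the proof is routine componentwise bookkeeping in the product hyperoperation on $A_1 \times A_2$.
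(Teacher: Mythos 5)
Your proposal is correct and follows essentially the same route as the paper: the same cycle of implications, with (i)$\Rightarrow$(ii) obtained by lifting $x_i$ to $(x_i,0_{A_2})$ and (ii)$\Rightarrow$(iii) by projecting to the first coordinate. The only difference is that you explicitly justify that $(x_i,0_{A_2})$ is a non-unit of $A_1\times A_2$ (with a sensible fallback using a non-unit $c\in A_2$), a point the paper simply asserts.
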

\begin{proof}
(i) $\Longrightarrow$ (ii) Suppose on the contrary, $Q_1$ is not an $AB$-$(u,v)$-absorbing  hyperideal of $A_1$. This means that we have $x_1 \circ_1 \cdots \circ_1 x_u \subseteq Q_1$ for some $x_1, \cdots,x_u \in A_1$ and none of product of the terms $v$ of the $x_i^,$s is not a subset of $Q_1$. Since $Q$ is a  $(u,v)$-absorbing  hyperideal of $A$, $(x_1,0) \circ \cdots \circ (x_u,0) \subseteq Q$ and none of the $(x_i,0)^,$s are unit, there exist $v$ of the $(x_i,0)^,$s whose is a subset of $Q$ which implies  there exist $v$ of the $x_i^,$s whose is a subset of $Q_1$ which is a contradiction. Hence $Q_1$ is an $AB$-$(u,v)$-absorbing  hyperideal of $A_1$.

(ii) $\Longrightarrow$ (iii) Let $(x_1,y_1) \circ \cdots \circ (x_u,y_u) \subseteq Q$ for $(x_1,y_1), \cdots ,  (x_u,y_u) \in A$. This means that $x_1 \circ_1 \cdots \circ_1 x_u \subseteq Q_1$. Since $Q_1$ is an $AB$-$(u,v)$-absorbing  hyperideal of $A_1$, there exist $v$ of the $x_i^,$s, say $x_{i_1},\cdots,x_{i_v}$, whose product is a subset of $Q_1$. It follows that $(x_{i_1},y_{i_1}) \circ \cdots \circ (x_{i_v},y_{i_v}) \subseteq Q$ and so $Q$ is an $AB$-$(u,v)$-absorbing  hyperideal of $A$.

(iii) $\Longrightarrow$ (i) The claim is true because every $AB$-$(u,v)$-absorbing is a $(u,v)$-absorbing  hyperideal.
\end{proof}
\section{  $(u,v)$-absorbing prime hyperideals }
In this section, we aim to extend the class of the $(u,v)$-absorbing hyperideals to $(u,v)$-absorbing prime hyperideals.
\begin{definition}
Let $u,v \in \mathbb{N}$ with $u >v$. A proper hyperideal $Q$ of  $A$  refers to a   $(u,v)$-absorbing prime hyperideal if $x_1 \circ \cdots \circ x_u \subseteq Q$ for $x_1,\cdots, x_u \in A \backslash U(A)$ implies either $x_1 \circ \cdots \circ x_v \subseteq Q$ or $x_{v+1} \circ \cdots \circ x_u \subseteq Q$.
\end{definition}
\begin{example}
In Example \ref{ex1}, the hyperideal $\langle 15 \rangle$ is a $(3,1)$-absorbing prime  hyperideal. However, since  $3^2 \circ 5=\{180,360,720 \} \subseteq \langle 15 \rangle$ but $3,5 \notin \langle 15 \rangle$, then $\langle 15 \rangle$ is not a $(3,1)$-absorbing hyperideal
\end{example}
\begin{remark} \label{s1}
Let $u,v \in \mathbb{N}$ with $u >v$ and $Q$ be a $\mathcal{C}$-hyperideal of $A$. 
\begin{itemize}
\item[\rm{(i)}]~  $Q$ is a $(u,v)$-absorbing prime hyperideal of $A$ if and only if  $Q$ is a $(u,u-v)$-absorbing prime hyperideal.
 \item[\rm{(ii)}]~ If $Q$ is a $(u,v)$-absorbing prime hyperideal of $A$, then   $Q$ is a $(u+1,v+1)$-absorbing prime hyperideal.
\end{itemize}
\end{remark}
\begin{proposition} \label{s2}
Assume that  $Q$ is a strong $\mathcal{C}$-hyperideal of $A$ such that $A$ is not local and $u,v \in \mathbb{N}$ with $u >v$. Then $Q$ is a $(u,v)$-absorbing prime hyperideal of $A$ if and only if $Q$ is a $(u-1,v-1)$-absorbing prime hyperideal.
\end{proposition}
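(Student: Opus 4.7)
The direction $(u-1,v-1)$-absorbing prime $\Longrightarrow$ $(u,v)$-absorbing prime is immediate from Remark \ref{s1}(ii) applied with $u-1,v-1$ in place of $u,v$, so the content of the proposition lies in the converse. My plan for the converse is to pad a given $(u-1)$-fold product with a carefully chosen non-unit, invoke the $(u,v)$-absorbing prime hypothesis, and then average two such paddings, exploiting that $A$ is not local.

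Concretely, assume $Q$ is $(u,v)$-absorbing prime, take $x_1,\dots,x_{u-1}\in A\setminus U(A)$ with $x_1\circ\cdots\circ x_{u-1}\subseteq Q$, and suppose the conclusion $x_v\circ\cdots\circ x_{u-1}\subseteq Q$ fails. Because $A$ is not local, the set of non-units is not a hyperideal; the standard characterization (together with the strong $\mathcal{C}$-hyperideal assumption on $Q$, which is used to rule out hyperring-specific pathologies when transporting the ring statement) supplies non-units $a,b\in A$ with $a+b\in U(A)$. By the hyperideal axiom, $a\circ x_1\circ\cdots\circ x_{u-1}\subseteq Q$. Applying the $(u,v)$-absorbing prime property to this $u$-fold product of non-units and using that the alternative $x_v\circ\cdots\circ x_{u-1}\subseteq Q$ is excluded, I obtain $a\circ x_1\circ\cdots\circ x_{v-1}\subseteq Q$, and similarly $b\circ x_1\circ\cdots\circ x_{v-1}\subseteq Q$.

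Weak distributivity together with the fact that $Q$ is closed under addition then gives
\[
(a+b)\circ x_1\circ\cdots\circ x_{v-1}\subseteq a\circ x_1\circ\cdots\circ x_{v-1}+b\circ x_1\circ\cdots\circ x_{v-1}\subseteq Q.
\]
Since $a+b$ is a unit, pick $c\in A$ with $1\in(a+b)\circ c$. Using the identity property $y\in y\circ 1$ for each $y\in x_1\circ\cdots\circ x_{v-1}$, the associativity-driven inclusion $1\circ X\subseteq c\circ(a+b)\circ X$, and finally the hyperideal property of $Q$, one chases the containment
\[
x_1\circ\cdots\circ x_{v-1}\subseteq c\circ(a+b)\circ x_1\circ\cdots\circ x_{v-1}\subseteq c\circ Q\subseteq Q,
\]
which is precisely the $(u-1,v-1)$-absorbing prime conclusion.

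The main obstacle I anticipate is the very first step: legitimising, in a multiplicative hyperring, the passage from ``$A$ is not local'' to ``there exist non-units $a,b$ with $a+b$ a unit.'' In a classical commutative ring this is the standard characterisation of local rings via the non-units forming an ideal, but in a hyperring the defining condition of a hyperideal could in principle fail via the multiplicative axiom $r\circ x\subseteq\mathfrak{m}$ instead of additive closure; this is where the strong $\mathcal{C}$-hyperideal hypothesis is invoked, and one may also pass to the fundamental ring $A/\gamma^{\ast}$ of Theorem \ref{7} to transfer the classical statement. Everything after that step is direct bookkeeping with the associative, commutative, and weakly distributive hyperoperation axioms.
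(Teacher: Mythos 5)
Your proof is correct and follows essentially the same strategy as the paper's: pad the $(u-1)$-fold product with a non-unit, apply the $(u,v)$-absorbing prime hypothesis to strip off the unwanted factor, and then use distributivity together with the unit/non-unit dichotomy furnished by non-locality to cancel the padding. The only real difference is organizational, but it is worth recording. The paper argues by contradiction: it shows that if the $(u-1,v-1)$ property failed, then $y+z$ would be a unit for every non-unit $y$ and unit $z$, whence $A$ would be local by Lemma 2.6 of \cite{Ghiasvand}; this forces it to pass from $(y+z)\circ x_1\circ\cdots\circ x_{v-1}\subseteq Q$ to $y\circ x_1\circ\cdots\circ x_{v-1}+z\circ x_1\circ\cdots\circ x_{v-1}\subseteq Q$, which is exactly where the strong $\mathcal{C}$-hyperideal hypothesis is consumed. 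You instead extract from non-locality two non-units $a,b$ with $a+b\in U(A)$ and run the inclusion the other way, $(a+b)\circ X\subseteq a\circ X+b\circ X\subseteq Q$, which needs only that $Q$ is an additive subgroup; so your version of the forward direction does not use the strong $\mathcal{C}$ assumption at all. As for the step you flag as the main obstacle: the fact you need is precisely the contrapositive of the statement the paper cites from \cite{Ghiasvand}, and your worry about the multiplicative axiom is unfounded --- if $x$ is a non-unit then $r\circ x$ consists of non-units automatically (a unit $t\in r\circ x$ would give $1\in t\circ s\subseteq x\circ(r\circ s)$), so the set of non-units fails to be a hyperideal only through additive closure, and non-locality does supply the required $a,b$. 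No detour through the fundamental ring is needed.
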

\begin{proof}
$(\Longrightarrow)$ Let $Q$ be a $(u,v)$-absorbing prime hyperideal of $A$. Suppose on contrary that $Q$ is not a $(u-1,v-1)$-absorbing prime hyperideal of $A$. Then there exist $x_1,\cdots,x_{u-1} \in A \backslash U(A)$ such that $x_1\circ \cdots \circ x_{u-1} \subseteq Q$ but neither $x_1\circ \cdots \circ x_{v-1} \subseteq Q$ nor $x_{v}\circ \cdots \circ x_{u-1} \subseteq Q$. Take any $y \in A \backslash U(A)$. Since $Q$ is a $(u,v)$-absorbing prime hyperideal of $A$, $y \circ x_1\circ \cdots \circ x_{u-1} \subseteq Q$ and $x_{v}\circ \cdots \circ x_{u-1} \nsubseteq Q$, we have $y \circ x_1\circ \cdots \circ x_{v-1} \subseteq Q$. Take any  $z \in U(A)$. If $y+z \notin U(A)$, then we obtain $(y+z) \circ x_1\circ \cdots \circ x_{v-1} \subseteq Q$ as $(y+z) \circ x_1\circ \cdots \circ x_{u-1} \subseteq Q$, $x_{v}\circ \cdots \circ x_{u-1} \nsubseteq Q$ and $Q$ is a $(u,v)$-absorbing prime hyperideal of $A$. Since   $Q$ is a strong $\mathcal{C}$-hyperideal of $A$ and $(y+z) \circ x_1\circ \cdots \circ x_{v-1}   \subseteq y \circ x_1\circ \cdots \circ x_{v-1} +z  \circ x_1\circ \cdots \circ x_{v-1}$, we conclude that $y \circ x_1\circ \cdots \circ x_{v-1} +z  \circ x_1\circ \cdots \circ x_{v-1} \subseteq Q$. This implies that $ x_1\circ \cdots \circ x_{v-1} \subseteq Q$ because $y \circ x_1\circ \cdots \circ x_{v-1} \subseteq Q$. This is a contradiction. Then we have $y+z \in U(A)$. By Lemma 2.6 in \cite{Ghiasvand}, we get the result that $A$ is local which is  a contradiction. Thus $Q$ is a $(u-1,v-1)$-absorbing prime hyperideal of $A$.

$(\Longleftarrow) $ It is obvious by Remark \ref{s1} (ii). 
\end{proof}

\begin{proposition} \label{s3}
Assume that  $u,v \in \mathbb{N}$ with $u >v$ and $Q$ is a strong $\mathcal{C}$-hyperideal of $A$ such that $A$ is not local. Then $Q$ is a $(u-v+1,1)$-absorbing prime hyperideal of $A$ if and only if $Q$ is a prime hyperideal. 
\end{proposition}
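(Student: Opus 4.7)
The plan is to handle the two directions separately. For $(\Longleftarrow)$, assume $Q$ is prime and take non-units $x_1,\dots,x_{u-v+1}$ with $x_1 \circ \cdots \circ x_{u-v+1} \subseteq Q$; I want $x_1 \in Q$ or $x_2 \circ \cdots \circ x_{u-v+1} \subseteq Q$. For any $a \in x_2 \circ \cdots \circ x_{u-v+1}$, $x_1 \circ a \subseteq x_1 \circ \cdots \circ x_{u-v+1} \subseteq Q$, so primeness yields $x_1 \in Q$ or $a \in Q$; if $x_1 \notin Q$, then every such $a$ lies in $Q$, and hence $x_2 \circ \cdots \circ x_{u-v+1} \subseteq Q$.

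For $(\Longrightarrow)$, write $n := u-v+1 \geq 2$ and assume $Q$ is $(n,1)$-absorbing prime. I argue by contradiction, supposing there exist $x,y \in A$ with $x \circ y \subseteq Q$ but $x,y \notin Q$. If either of $x,y$ is a unit, inverting it forces the other into $Q$, so both $x,y \in A \setminus U(A)$. Since $A$ is not local, Lemma 2.6 in \cite{Ghiasvand} produces non-units $a,a' \in A \setminus U(A)$ whose sum $c := a+a'$ is a unit; this plays the role of the ``separating'' element, in the spirit of the proof of Proposition \ref{s2}.

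The heart of the argument is a downward induction on $k$, from $k = n-2$ down to $k=0$, establishing the claim: $x \circ w_1 \circ \cdots \circ w_k \subseteq Q$ for every choice of non-units $w_1,\dots,w_k \in A \setminus U(A)$. The base case $k = n-2$ follows by applying the $(n,1)$-absorbing prime property to $y \circ x \circ w_1 \circ \cdots \circ w_{n-2} \subseteq Q$ (viewing $y$ as the first factor), since $y \notin Q$. For the inductive step from $k+1$ to $k$, invoke the hypothesis twice, first with final factor $a$ and then with $a'$, and distribute to obtain
\[
x \circ w_1 \circ \cdots \circ w_k \circ c \;\subseteq\; x \circ w_1 \circ \cdots \circ w_k \circ a + x \circ w_1 \circ \cdots \circ w_k \circ a' \;\subseteq\; Q.
\]
Finally, for any $e \in x \circ w_1 \circ \cdots \circ w_k$ one has $e \circ c \subseteq Q$, and then $e \in e \circ c \circ c^{-1} \subseteq Q$ since $c$ is a unit and $Q$ absorbs multiplication by $c^{-1}$. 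Specializing to $k=0$ delivers $x \in Q$, the desired contradiction.

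The principal obstacle is the inductive step: one must combine distributivity with the unit-cancellation trick at every stage and keep careful track of set-level inclusions rather than element-level products. The strong $\mathcal{C}$-hyperideal hypothesis mirrors its role in Proposition \ref{s2} and should keep these set manipulations well-behaved, while the non-local hypothesis is precisely what supplies the essential pair $(a,a')$ that powers the whole induction.
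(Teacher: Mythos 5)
Your proof is correct, but it takes a genuinely different route from the paper. The paper's argument for the nontrivial direction is a two-line reduction: Remark \ref{s1}(i) converts $(u-v+1,1)$-absorbing prime into $(u-v+1,u-v)$-absorbing prime, and then Proposition \ref{s2} is applied $u-v-1$ times to descend to $(2,1)$-absorbing prime, i.e.\ primeness on non-units (units being handled trivially). You instead reprove the forward direction from scratch by a downward induction, driven by the same engine that sits inside the proof of Proposition \ref{s2} --- non-locality supplying, via Lemma 2.6 of \cite{Ghiasvand}, non-units whose sum is a unit, combined with distributivity and unit cancellation. What your route buys is worth noting: because you pass from $x\circ w_1\circ\cdots\circ w_k\circ a\subseteq Q$ and $x\circ w_1\circ\cdots\circ w_k\circ a'\subseteq Q$ to $x\circ w_1\circ\cdots\circ w_k\circ(a+a')\subseteq Q$, the distributivity inclusion of axiom (3) points in the favorable direction and the strong $\mathcal{C}$-hyperideal hypothesis is never actually needed; in Proposition \ref{s2} it is essential, since there one must pass from $(y+z)\circ X\subseteq Q$ to $y\circ X+z\circ X\subseteq Q$, which goes against the inclusion. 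So your argument is self-contained and slightly more general, while the paper's is shorter and reuses already-established structure. Two small points you should make explicit: (1) Lemma 2.6, as the paper uses it, yields a non-unit $y$ and a unit $z$ with $y+z\notin U(A)$; you obtain your pair by writing $z=(y+z)+(-y)$, a sum of two non-units equal to a unit; (2) in the base case with $n=2$ the list $w_1,\dots,w_{n-2}$ is empty and the claim is immediate from the $(2,1)$-condition, which is worth saying so the induction is well-founded in all cases.
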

\begin{proof}
($\Longrightarrow$)Let $Q$ be a $(u-v+1,1)$-absorbing prime hyperideal of $A$. Then we get the result that $Q$ is a $(u-v+1,u-v)$-absorbing prime hyperideal of $A$ by Remark \ref{s1} (i). Hence we conclude that $Q$ is a prime hyperideal by Proposition \ref{s2}. 

($\Longleftarrow$) Straightforward. 
\end{proof}
For  a  $\mathcal{C}$-hyperideal $Q$   of $A$  that the hyperring $A$ is not  local, it can be easily proved, by induction, that   $Q$ is a $(u-1,v-1)$-absorbing prime hyperideal for $u,v \in \mathbb{N}$ with $u >v$ if and only if  $Q$ is a $(u-v+1,1)$-absorbing prime hyperideal. Thus in view of Proposition \ref{s2} and Proposition \ref{s3}, we have the following result.
\begin{corollary} \label{s4}
In a hyperring  $A$ that is not  local, every $(u,v)$-absorbing prime hyperideal is prime and vice versa.   
\end{corollary}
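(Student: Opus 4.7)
The plan is to chain together the two propositions that immediately precede the corollary, using the inductive equivalence flagged in the paragraph between Proposition \ref{s3} and the corollary. Namely, Proposition \ref{s2} lets one strip one unit off each of the two parameters of a $(u,v)$-absorbing prime hyperideal, while Proposition \ref{s3} identifies the terminal case $(u-v+1,1)$ with primeness. Throughout I take $Q$ to be a strong $\mathcal{C}$-hyperideal (the standing hypothesis inherited from those two propositions) in a non-local hyperring $A$, and $u,v\in\mathbb{N}$ with $u>v$.

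For the forward direction, suppose $Q$ is a $(u,v)$-absorbing prime hyperideal. I would apply Proposition \ref{s2} iteratively: the first application gives that $Q$ is $(u-1,v-1)$-absorbing prime; a second application (now with parameters $(u-1,v-1)$, which still satisfy $u-1>v-1$) gives that $Q$ is $(u-2,v-2)$-absorbing prime; continuing for a total of $v-1$ steps produces a $(u-v+1,1)$-absorbing prime hyperideal. Proposition \ref{s3} then yields that $Q$ is prime. The only thing to verify at each step is that the hypotheses of Proposition \ref{s2} remain in force, which they do because $A$ non-local, $Q$ strong $\mathcal{C}$, and the strict inequality on the parameters are all preserved by the reduction.

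For the converse, assume $Q$ is prime. Proposition \ref{s3} gives immediately that $Q$ is $(u-v+1,1)$-absorbing prime. I then invoke Remark \ref{s1}(ii) a total of $v-1$ times: from $(u-v+1,1)$-absorbing prime one obtains $(u-v+2,2)$-absorbing prime, then $(u-v+3,3)$-absorbing prime, and after $v-1$ steps one reaches $(u,v)$-absorbing prime, as required. This half does not even use the non-local hypothesis; it is purely a statement about raising both indices.

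The main (small) obstacle is purely bookkeeping: writing the induction cleanly so that one does not accidentally invoke Proposition \ref{s2} at a pair violating $u>v$, and making it transparent that the conditions ``strong $\mathcal{C}$-hyperideal'' and ``$A$ not local'' depend on $Q$ and $A$ only, so they propagate unchanged through the chain of reductions. Once the induction is stated, the proof is essentially the observation that the diagram $(u,v)\to(u-1,v-1)\to\cdots\to(u-v+1,1)\leftrightarrow\text{prime}$ is reversible under the given hypotheses.
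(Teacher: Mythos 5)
Your proposal is correct and follows essentially the same route as the paper: the author likewise reduces $(u,v)$ to $(u-v+1,1)$ by iterating Proposition \ref{s2} (with Remark \ref{s1}(ii) supplying the reverse climb) and then invokes Proposition \ref{s3} to identify the base case with primeness. Your explicit remark that the standing hypotheses (strong $\mathcal{C}$-hyperideal, $A$ not local) must be carried through the induction is a point the paper leaves implicit, but it is the same argument.
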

\begin{theorem}
Assume that $Q_1$ and $Q_2$ are strong $\mathcal{C}$-hyperideals of the hyperring $A_1$ and $A_2$, respectively. Let  $u,v \in \mathbb{N}$ with $u >v$. Then the following are equivalent:
\begin{itemize}
\item[\rm{(i)}]~ $Q_1 \times Q_2$ is a $(u,v)$-absorbing prime hyperideal of $A_1 \times A_2$.
\item[\rm{(ii)}]~$Q_1 \times Q_2$ is a prime hyperideal of $A_1 \times A_2$.
\item[\rm{(iii)}]~ $Q_1$ is a prime hyperideal of $A_1$ and $Q_2=A_2$ or $Q_2$ is a prime hyperideal of $A_2$ and $Q_1=A_1$.
\end{itemize}
\end{theorem}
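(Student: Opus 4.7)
The plan is to travel the cycle $\mathrm{(ii)} \Rightarrow \mathrm{(i)} \Rightarrow \mathrm{(ii)}$ and separately establish $\mathrm{(ii)} \Leftrightarrow \mathrm{(iii)}$. The implication $\mathrm{(ii)} \Rightarrow \mathrm{(i)}$ is essentially free: if $x_1 \circ \cdots \circ x_u \subseteq Q_1 \times Q_2$ and $Q_1 \times Q_2$ is prime, iterating primeness on the associative hyperproduct forces some $x_i \in Q_1 \times Q_2$, and then $x_i$ absorbs either the $v$-fold subproduct $x_1 \circ \cdots \circ x_v$ (if $i \leq v$) or the $(u-v)$-fold subproduct $x_{v+1} \circ \cdots \circ x_u$ via the hyperideal property, so no unit restriction is needed.

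The heart of the argument is $\mathrm{(i)} \Rightarrow \mathrm{(ii)}$, which I would reduce to Corollary \ref{s4}. To apply it I need two things: that $A_1 \times A_2$ is not local, and that $Q_1 \times Q_2$ is a strong $\mathcal{C}$-hyperideal of $A_1 \times A_2$. The first is immediate, because if $M_i$ is a maximal hyperideal of $A_i$ (which exists since each $A_i$ has identity $1$), then $M_1 \times A_2$ and $A_1 \times M_2$ are two distinct maximal hyperideals of the product. For the second, I would use that any element $D$ of $\mathfrak{U}$ for $A_1 \times A_2$, being a finite sum of finite hyperproducts of pairs, decomposes componentwise as $\{(x,y) : x \in D^{(1)},\, y \in D^{(2)}\}$ with $D^{(i)} \in \mathfrak{U}_i$; if $D$ meets $Q_1 \times Q_2$, both projections meet the respective $Q_i$, so the strong $\mathcal{C}$-property of each $Q_i$ yields $D^{(i)} \subseteq Q_i$, hence $D \subseteq Q_1 \times Q_2$.

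For $\mathrm{(ii)} \Rightarrow \mathrm{(iii)}$, I would test primeness on the cross products $(a_1, 0) \circ (0, a_2)$: since $0 \in Q_1 \cap Q_2$ and each $Q_i$ absorbs multiplication, this set lies inside $Q_1 \times Q_2$, and primeness forces $a_1 \in Q_1$ or $a_2 \in Q_2$ for every such pair. Running this over all $a_1, a_2$ yields $Q_1 = A_1$ or $Q_2 = A_2$, and both cannot hold simultaneously because $Q_1 \times Q_2$ is proper. Say $Q_1 = A_1$; if $x \circ_2 y \subseteq Q_2$, then $(1, x) \circ (1, y) \subseteq A_1 \times Q_2$, and primeness of $Q_1 \times Q_2$ gives $x \in Q_2$ or $y \in Q_2$, so $Q_2$ is prime. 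The converse $\mathrm{(iii)} \Rightarrow \mathrm{(ii)}$ is a one-line direct check on $(x_1, y_1) \circ (x_2, y_2) \subseteq A_1 \times Q_2$ projected onto the second coordinate.

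The step I anticipate will need the most care is verifying that $Q_1 \times Q_2$ inherits the strong $\mathcal{C}$-property from the factors, since elements of $\mathfrak{U}$ involve finite sums of finite hyperproducts in the product hyperring and one must be careful that the componentwise decomposition actually produces elements of $\mathfrak{U}_1$ and $\mathfrak{U}_2$ (not merely subsets thereof) and that meeting $Q_1 \times Q_2$ really forces each projection to meet its factor. Once this bookkeeping is settled, the rest of $\mathrm{(i)} \Rightarrow \mathrm{(ii)}$ is a clean appeal to Corollary \ref{s4}, and the remaining implications are short.
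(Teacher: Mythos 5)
Your proposal is correct and takes essentially the same route as the paper, whose entire proof is a one-line appeal to Corollary \ref{s4} together with the observation that $A_1 \times A_2$ is not local. You additionally supply the bookkeeping the paper leaves implicit --- chiefly that $Q_1 \times Q_2$ inherits the strong $\mathcal{C}$-hyperideal property (which is genuinely needed for Corollary \ref{s4}, since it rests on Propositions \ref{s2} and \ref{s3}) and the standard verification of (ii)$\Leftrightarrow$(iii) --- and these details check out.
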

\begin{proof}
It can be easily seen that the idea is true by Corollary \ref{s4} as $A_1 \times A_2$ is not local.
\end{proof}
\begin{proposition} \label{s5}
Let $u,v \in \mathbb{N}$ with $u >v$ and  $Q$ be a strong $\mathcal{C}$-hyperideal of  $A$. Then $Q$ is a $(v+1,v)$-absorbing prime hyperideal of $A$ if and only if  $Q$ is a prime hyperideal or $A$ is local such that $M^v \subseteq Q$ for the only maximal hyperideal $M$ of $A$.
\end{proposition}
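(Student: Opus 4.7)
The plan is to prove both implications, splitting the forward direction according to whether $A$ is local.

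For the backward direction, suppose first that $Q$ is prime. Given $x_1 \circ \cdots \circ x_{v+1} \subseteq Q$ with all $x_i$ non-units, I would pick any $y \in x_1 \circ \cdots \circ x_v$, note that $y \circ x_{v+1} \subseteq x_1 \circ \cdots \circ x_{v+1} \subseteq Q$, and invoke primality to get $x_{v+1} \in Q$ or $y \in Q$; since $y$ was arbitrary, the latter case gives $x_1 \circ \cdots \circ x_v \subseteq Q$. If instead $A$ is local with unique maximal hyperideal $M$ and $M^v \subseteq Q$, then $A \setminus U(A) = M$, so any such tuple already satisfies $x_1 \circ \cdots \circ x_v \subseteq M^v \subseteq Q$.

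For the forward direction, suppose $Q$ is $(v+1,v)$-absorbing prime. If $A$ is not local, Corollary \ref{s4} gives $Q$ prime and we are done. Otherwise $A$ is local with unique maximal $M$; assume additionally that $Q$ is not prime and choose $a, b \in A \setminus Q$ with $a \circ b \subseteq Q$. Both must be non-units, since if, say, $a \in U(A)$ then $b \in a^{-1} \circ a \circ b \subseteq Q$; hence $a, b \in M$. To show $M^v \subseteq Q$, fix arbitrary $m_1, \ldots, m_v \in M$. The product $a \circ b \circ m_1 \circ \cdots \circ m_{v-1}$ is a set of $v+1$ non-units contained in $Q$, so applying the $(v+1,v)$-absorbing prime property with $a$ singled out as the distinguished factor (using commutativity of $\circ$) and using $a \notin Q$ yields $b \circ m_1 \circ \cdots \circ m_{v-1} \subseteq Q$. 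Multiplying by $m_v$ keeps us in $Q$, and a second application to $b \circ m_1 \circ \cdots \circ m_v \subseteq Q$ with $b$ singled out, combined with $b \notin Q$, gives $m_1 \circ \cdots \circ m_v \subseteq Q$, as required.

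The main obstacle is orchestrating the two sequential applications of the definition and verifying that commutativity of $\circ$ lets me choose which factor to isolate at each step. A noteworthy edge case is $v=1$: the inner reduction involving the $m_i$'s degenerates, but here $(2,1)$-absorbing primeness applied directly to $a \circ b \subseteq Q$ forces $a \in Q$ or $b \in Q$, contradicting the choice of $a, b$. So when $v=1$ the ``not prime'' subcase cannot occur and $Q$ is automatically prime, making the second alternative in the statement simply vacuous in that case.
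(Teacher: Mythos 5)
Your proof is correct, and the key local case is handled by a genuinely different argument from the paper's. For the forward direction the paper also dispatches the non-local case via Corollary \ref{s4}, but in the local case it argues contrapositively from $M^v \nsubseteq Q$: it shows $Q$ must then be a $(v,v-1)$-absorbing prime hyperideal (taking a hypothetical witness tuple $x_1,\ldots,x_v$, an arbitrary product $a_1\circ\cdots\circ a_v$ of elements of $M$, and using the strong $\mathcal{C}$-hyperideal property to upgrade an element-level containment $a\circ b\subseteq Q$ to $a_1\circ\cdots\circ a_v\circ b\subseteq Q$), and then descends by induction through $(v-1,v-2)$-, \ldots, $(2,1)$-absorbing primeness to conclude that $Q$ is prime. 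You instead start from ``$Q$ not prime,'' fix a witness pair $a,b\notin Q$ with $a\circ b\subseteq Q$, check both are non-units, and extract $M^v\subseteq Q$ directly with two applications of the defining property (isolating $a$, then $b$); this avoids the induction entirely and, notably, does not invoke the strong $\mathcal{C}$-hyperideal hypothesis in the local case --- that hypothesis is still needed for the non-local case, since Corollary \ref{s4} rests on Propositions \ref{s2} and \ref{s3}. Your handling of the backward direction (which the paper dismisses as obvious) and of the $v=1$ degeneration is also sound; both you and the paper take for granted the standard fact that in a local hyperring every non-unit lies in the unique maximal hyperideal $M$.
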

\begin{proof}
($\Longrightarrow$)  Let $Q$ be a $(v+1,v)$-absorbing prime hyperideal of $A$ and  the hyperring $A$ is not  local. Then $Q$ is prime by Corollary \ref{s4}. Suppose that $A$ is local and $M$ is the only maximal hyperideal of $A$ such that $M^v \nsubseteq Q$. If $Q$ is not a $(v,v-1)$-absorbing prime hyperideal of $A$, then there exist $x_1,\cdots,x_v \in A \backslash U(A)$ such that $x_1 \circ \cdots \circ x_v \subseteq Q$ but $x_1 \circ \cdots \circ x_{v-1} \nsubseteq Q$ and $x_v \notin Q$. Take any  $a \in a_1 \circ \cdots \circ a_v $ for every $a_1,\cdots,a_v \in M$. Since $Q$ is a $(v+1,v)$-absorbing prime hyperideal of $A$, 
$a \circ x_1 \circ \cdots \circ x_v \subseteq Q$ and $x_v \notin Q$, we get $a \circ x_1 \circ \cdots \circ x_{v-1} \subseteq Q$. Let $b \in x_1 \circ \cdots \circ x_{v-1}$. So $a \circ b \subseteq a \circ x_1 \circ \cdots \circ x_{v-1} \subseteq Q$. On the other hand, $a \circ b \subseteq a_1 \circ \cdots \circ a_v \circ b$. Then we get $a_1 \circ \cdots \circ a_v \circ b \subseteq Q$ as $Q$ is a strong $\mathcal{C}$-hyperideal of  $A$. If $b \in Q$, then $x_1 \circ \cdots \circ x_{v-1} \cap Q \neq \varnothing$ and so $x_1 \circ \cdots \circ x_{v-1} \subseteq Q$ which is impossible. Therefore $b \notin Q$. Since $Q$ is a $(v+1,v)$-absorbing prime hyperideal of $A$, $a_1 \circ \cdots \circ a_v \circ b \subseteq Q$ and $b \notin Q$, we conclude that $a_1 \circ \cdots \circ a_v \subseteq Q$ which means $M^v \subseteq Q$, a contradiction. Then we get the result that  $Q$ is  a $(v,v-1)$-absorbing prime hyperideal of $A$. By induction, we conclude that $Q$ is prime as $M^i \nsubseteq Q$ for any $i \leq v$.

($\Longleftarrow$) It is obvious.
\end{proof}
\begin{corollary} \label{s6}
Assume that $u,v \in \mathbb{N}$ with $u >v$ and  $Q$ is a $(v+1,v)$-absorbing prime strong $\mathcal{C}$-hyperideal of  $A$ that is not prime. If $P$ is a hyperideal of $A$ containing $Q$, then $P$ is a $(u,v)$-absorbing prime hyperideal of $A$.
\end{corollary}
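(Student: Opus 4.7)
The plan is to apply Proposition \ref{s5} to extract the structural information forced by the hypotheses on $Q$, and then observe that the conclusion becomes almost automatic for any hyperideal above $Q$.

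First, since $Q$ is a $(v+1,v)$-absorbing prime strong $\mathcal{C}$-hyperideal of $A$ that is \emph{not} prime, Proposition \ref{s5} leaves only one possibility: $A$ must be local, with unique maximal hyperideal $M$, and $M^v \subseteq Q$. This is the key fact to extract before working with $P$.

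Next I would note the standard consequence that in a local commutative multiplicative hyperring the non-units coincide with the unique maximal hyperideal, i.e.\ $A\setminus U(A) = M$. Indeed, if $x \in A\setminus U(A)$ then the principal hyperideal generated by $x$ is proper and hence contained in $M$; conversely, any element of $M$ cannot be a unit, as otherwise $1$ would lie in $M$, forcing $M = A$.

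Now let $P$ be any hyperideal of $A$ with $Q \subseteq P$, and suppose $x_1 \circ \cdots \circ x_u \subseteq P$ with $x_1,\dots,x_u \in A\setminus U(A)$. By the preceding paragraph each $x_i \in M$. Since $u > v$, the $v$-fold hyperproduct $x_1 \circ \cdots \circ x_v$ lies inside $M^v \subseteq Q \subseteq P$. Thus the first $v$ of the $x_i$'s already have product contained in $P$, and the $(u,v)$-absorbing prime condition is satisfied trivially.

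There is essentially no obstacle here: the entire content of the corollary is packaged into Proposition \ref{s5}. The only small point to check carefully is the identification $A\setminus U(A) = M$ in a local hyperring, which is routine; once that is in hand the containment chain $x_1 \circ \cdots \circ x_v \subseteq M^v \subseteq Q \subseteq P$ closes the argument immediately, independently of the particular $u > v$ chosen.
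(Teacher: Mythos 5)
Your proof is correct and follows essentially the same route as the paper: both apply Proposition \ref{s5} to deduce that $A$ is local with $M^v \subseteq Q \subseteq P$ and then conclude for the larger hyperideal $P$. Your explicit final step (every non-unit lies in $M$, so $x_1 \circ \cdots \circ x_v \subseteq M^v \subseteq P$, making the $(u,v)$-condition hold trivially for any $u>v$) is in fact slightly more careful than the paper's closing appeal to Proposition \ref{s5}, which on its face only yields the $(v+1,v)$ case.
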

\begin{proof}
Suppose that $Q$ is a $(v+1,v)$-absorbing prime strong $\mathcal{C}$-hyperideal of  $A$ that is not prime. By Proposition \ref{s5}, we get the result that $A$ is local and $M^v \subseteq Q$ where $M$ is the only maximal hyperideal of $A$.  Then $M^v \subseteq P$ and so we are done by By Proposition \ref{s5}.
\end{proof}
\begin{theorem} \label{s7}
Let $Q$ be a $(u,v)$-absorbing prime strong $\mathcal{C}$-hyperideal of $A$ where $u,v \in \mathbb{N}$ and $u >v$. Then the following statements hold:
\begin{itemize}
\item[\rm{(i)}]~ $rad(Q)$ is a prime hyperideal of $A$.
\item[\rm{(ii)}]~ If $Q$ is not a prime hyperideal of $A$, then $A$ is local. In particular, if $u=v+1$, then $rad(Q)=M$ where $M$ is the only maximal hyperideal of $A$.
\end{itemize}
\end{theorem}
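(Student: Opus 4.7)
I would begin with (ii), since its first assertion will dovetail with the case analysis for (i). The first statement of (ii) is the contrapositive of Corollary \ref{s4}: were $A$ not local, then every $(u,v)$-absorbing prime hyperideal of $A$ would be prime, contradicting the hypothesis that $Q$ is not prime. For the ``in particular'' clause with $u=v+1$, Proposition \ref{s5} applies to the $(v+1,v)$-absorbing prime strong $\mathcal{C}$-hyperideal $Q$, and since $Q$ is not prime it forces $A$ to be local with $M^v\subseteq Q$ for the unique maximal hyperideal $M$. Every $x\in M$ then satisfies $x^v\subseteq M^v\subseteq Q$, so $M\subseteq rad(Q)$; and $rad(Q)\subseteq M$ because $M$ is itself a prime hyperideal containing the proper hyperideal $Q$. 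Hence $rad(Q)=M$, which is prime.

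For (i) I would split on whether $A$ is local. If $A$ is not local, Corollary \ref{s4} gives that $Q$ is prime, so $rad(Q)=Q$ is prime. Assume now $A$ is local. If $Q$ is prime we are done as above. Otherwise, let $x,y\in A$ with $x\circ y\subseteq rad(Q)$; I must show $x\in rad(Q)$ or $y\in rad(Q)$. The case in which one of $x,y$ is a unit is immediate from the ideal property of $rad(Q)$, since units can be absorbed to the other factor, so I reduce to $x,y\in A\setminus U(A)$. By Proposition \ref{t3}(i), $rad(Q)=\{a\in A:a^v\subseteq Q\}$; any $z\in x\circ y$ then satisfies $z^v\subseteq Q$ and $z^v\subseteq(x\circ y)^v=x^v\circ y^v$, so the strong $\mathcal{C}$-hyperideal property forces $x^v\circ y^v\subseteq Q$.

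The remaining task is to infer from $x^v\circ y^v\subseteq Q$, a product of $2v$ non-unit factors, that either $x^v\subseteq Q$ or $y^v\subseteq Q$. By iterating Remark \ref{s1}(ii), $Q$ is an $(u+k,v+k)$-absorbing prime hyperideal for every $k\ge 0$. My plan is to apply this to a padded product $x^m\circ y^n\subseteq Q$ (valid for $m,n\ge v$ by hyperideal absorption), reordering via commutativity so that all copies of one variable occupy one end; the $(u-v)$-sized ``last'' block in the resulting dichotomy then becomes $x^{u-v}$ or $y^{u-v}$, and Proposition \ref{t3}(i) places the corresponding variable into $rad(Q)$. The principal obstacle I anticipate is precisely this bookkeeping: one must choose $(m,n)$ and the ordering so that hyperideal absorption does not render both disjuncts of the $(u+k,v+k)$-absorbing prime dichotomy vacuously true, which appears to require separate treatment of the ranges $v<u\le 2v$ and $u>2v$ and, in borderline cases, an iteration of the construction on the leftover mixed product.
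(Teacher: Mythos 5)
Your treatment of (ii) is correct and coincides with the paper's: non-locality would force $Q$ to be prime by Corollary \ref{s4}, and for $u=v+1$ Proposition \ref{s5} gives $M^v\subseteq Q$, whence $rad(Q)=M$. The problem is in your main case of (i). Your first step invokes Proposition \ref{t3}(i) to get $rad(Q)=\{a\in A: a^v\subseteq Q\}$, but that proposition is stated for $(u,v)$-\emph{absorbing} $\mathcal{C}$-hyperideals, whereas here $Q$ is only assumed $(u,v)$-absorbing \emph{prime}. These are genuinely different hypotheses: the paper's example following the definition shows that $\langle 15\rangle$ in $(\mathbb{Z}_T,+,\circ)$ with $T=\{2,4\}$ is $(3,1)$-absorbing prime but not $(3,1)$-absorbing (the failure occurs exactly when $u-v>v$, i.e. when the ``second block'' of the prime dichotomy is longer than $v$). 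So the identity $rad(Q)=\{a: a^v\subseteq Q\}$ is not available, and with it goes the uniform exponent $v$ on which your whole padding scheme is built. Beyond that, the reduction you do reach, namely deducing $x^v\subseteq Q$ or $y^v\subseteq Q$ from $x^v\circ y^v\subseteq Q$, is only sketched: the ``leftover mixed product'' $x^{v}\circ y^{2v-u}$ (and its analogues for $u>2v$) is precisely where the argument has to be closed, and you explicitly leave it as an anticipated obstacle rather than resolving it. As written, part (i) is not proved.

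The paper's proof of (i) sidesteps both difficulties with one device worth noting. From $a\circ b\subseteq rad(Q)$ with $a,b$ non-units, pick $c\in a\circ b$; then $c^n\subseteq Q$ for some $n$ (only the general $\mathcal{C}$-hyperideal description of the radical is used, no control on $n$), hence $a^n\circ b^n\subseteq Q$. Now choose single elements $x\in a^n$ and $y\in b^n$ (handling the unit case for $x,y$ separately) and apply the $(u,v)$-absorbing prime hypothesis to the $u$-fold product
\[
\underbrace{a\circ\cdots\circ a}_{v-1}\circ\, x\,\circ \underbrace{b\circ\cdots\circ b}_{u-v-1}\circ\, y\subseteq Q .
\]
The two blocks of the dichotomy are then $a^{v-1}\circ x$ and $b^{u-v-1}\circ y$, each of which meets a pure power ($a^{v+n-1}$ resp.\ $b^{u-v+n-1}$), so the $\mathcal{C}$-hyperideal property puts that whole power inside $Q$ and hence $a\in rad(Q)$ or $b\in rad(Q)$. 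Compressing the arbitrary powers $a^n,b^n$ into the single factors $x,y$ is what makes the count come out to exactly $u$ with no case split on the relative sizes of $u$ and $2v$; if you want to salvage your approach, this is the missing idea.
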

\begin{proof}
(i) Suppose that $Q$ is a $(u,v)$-absorbing prime strong $\mathcal{C}$-hyperideal of $A$. Let $a \circ b \subseteq rad(Q)$ for $a,b \in A$. We may assume $a,b \in A \backslash U(A)$, without loss of generality. Then we have $a^n \circ b^n \subseteq Q$ for some $n \in \mathbb{N}$. Take any $x \in a^n$ and $y \in b^n$. If $x$ or $y \in U(A)$, then we obtain $b^n \subseteq e \circ b^n \subseteq   x^{-1} \circ x \circ b^n \subseteq x^{-1}  \circ a^n \circ b^n \subseteq Q$ which means $b \in rad(Q)$ or $a^n \subseteq a^n  \circ e \subseteq    a^n \circ y \circ y^{-1} \subseteq  a^n \circ b^n \circ y^{-1} \subseteq Q$ which means $a \in rad(Q)$. Now, we assume that $x,y \notin U(A)$.  Since $a^{v-1} \circ x \circ b^{u-v-1} \circ y \subseteq Q$ and $Q$ is a $(u,v)$-absorbing prime hyperideal of $A$, we get the result that  $a^{v-1} \circ x \subseteq Q$ or $b^{u-v-1} \circ y \subseteq Q$. In the first case, since $(a^{v-1} \circ a^n) \cap Q \neq \varnothing$, we get $a^{v+n-1}=a^{v-1} \circ a^n \subseteq Q$ which means $a \in rad(Q)$. In the second case, since $(b^{u-v-1} \circ b^n) \cap Q \neq \varnothing$, we have $b^{u-v+n-1}=b^{u-v-1} \circ b^n  \subseteq Q$ which implies $b \in rad(Q)$. Thus $rad(Q)$ is a prime hyperideal of $A$. 

(ii) Suppose that $Q$ is a $(u,v)$-absorbing prime strong $\mathcal{C}$-hyperideal of $A$ that is not prime. Then we conclude that $A$ is local by Corollary \ref{s4}. Let $u=v+1$ and $M$ be the only maximal hyperideal of $A$. By  Proposition \ref{s5}, we conclude that $M^v \subseteq Q$ and so $rad(Q)=M$.
\end{proof}

\begin{lem} \label{s10}
Let every  hyperideal of $A$ be a prime  $\mathcal{C}$-hyperideal. Then $A$ is a hyperfield. 
\end{lem}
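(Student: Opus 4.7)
The plan is to mimic the classical proof that a commutative ring in which every ideal is prime must be a field, adapting the argument to the multiplicative hyperring setting. The goal is to show that every nonzero element of $A$ is a unit. The argument splits into three stages: establishing the domain property of $A$, placing a nonzero element inside a principal hyperideal generated by its square, and extracting an explicit inverse.

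First, since every hyperideal is a prime $\mathcal{C}$-hyperideal, the zero hyperideal $\{0\}$ enjoys both properties. For any $x, y \in A$ with $0 \in x \circ y$, the $\mathcal{C}$-hyperideal property forces $x \circ y \subseteq \{0\}$, and then primeness yields $x = 0$ or $y = 0$. Hence $A$ has no zero divisors.

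Next, fix $a \in A \setminus \{0\}$ and consider the principal hyperideal $\langle a^2 \rangle$. If $\langle a^2 \rangle$ is proper, by hypothesis it is prime; since $a \circ a \subseteq \langle a^2 \rangle$ by construction, primeness gives $a \in \langle a^2 \rangle$. Otherwise $\langle a^2 \rangle = A$ and trivially $1 \in \langle a^2 \rangle$. In either case, I would use the description of a principal hyperideal together with the $\mathcal{C}$-hyperideal property to produce $r \in A$ with $a \in r \circ a \circ a = (r \circ a) \circ a$, and then pick $s \in r \circ a$ with $a \in s \circ a$. Now $a \in 1 \circ a$ and $a \in s \circ a$, combined with axiom (4) for multiplicative hyperrings applied to $(s-1) \circ a$ and the absence of zero divisors, forces $s = 1$. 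Therefore $1 \in r \circ a$, so $a$ is a unit and $A$ is a hyperfield.

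The main obstacle is the extraction step in the previous paragraph: passing from the set-theoretic membership $a \in \langle a^2 \rangle$, which a priori only places $a$ inside a finite sum of products of the form $r_i \circ a \circ a$, to the single-product form $a \in r \circ a \circ a$. Multiplicative hyperrings satisfy only weak distributivity, so a sum of products can properly contain the corresponding single product, and similarly $(s-1)\circ a$ is only a subset of $s\circ a - 1\circ a$, not in general equal. The $\mathcal{C}$-hyperideal hypothesis is designed precisely to tighten such set-theoretic inclusions into pointwise statements, and together with the no-zero-divisor property closes the argument. If the paper tacitly assumes strong distributivity (or a strong $\mathcal{C}$-hyperideal version of the hypothesis), the argument follows the classical template almost verbatim.
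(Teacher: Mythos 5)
Your proposal follows essentially the same route as the paper's proof: primeness of $\langle x^2\rangle$ gives $x \in \langle x^2\rangle$, one extracts $x \in x^2 \circ a$ for a single $a$, and then primeness of $\langle 0\rangle$ applied to $(t-1)\circ x$ for $t \in x\circ a$ forces $t=1$, so $x$ is a unit. The two obstacles you flag --- passing from membership in $\langle x^2\rangle$ (a priori a finite sum of products) to the single-product form $x \in x^2\circ a$, and the fact that $(t-1)\circ x$ is only \emph{contained} in $t\circ x - 1\circ x$ so that a strong $\mathcal{C}$-hyperideal property of $\langle 0\rangle$ is what is really being used --- are both present and passed over silently in the paper's own proof, so your write-up is, if anything, more candid about where the argument leans on unstated assumptions.
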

\begin{proof}

Let $0 \neq x \in A$.  Since $ \langle x^2 \rangle$ is a prime hyperideal of $A$ and $x^2 \subseteq  \langle x^2 \rangle$, we get $x \in \langle x^2 \rangle$ which means $x \in x^2 \circ a$ for some $a \in A$. It follows that $0 \in x^2 \circ a-x$. Since $0 \in x^2 \circ a-x \cap \langle 0 \rangle$, we get the result that $(x \circ a -1) \circ x  \subseteq x^2 \circ a-x \subseteq \langle 0 \rangle$. Let $t \in x \circ a$. Since $\langle 0 \rangle$ is a prime hyperideal, $ (t-1) \circ  x \subseteq \langle 0 \rangle$ and $x \neq 0$, we obtain $t-1=0$ and so $1 \in x \circ a$  which means $x$ is unit. Thus we conclude that $A$ is a hyperfield. 
\end{proof}
\begin{theorem} \label{s11}
Let  every hyperideal of $A$ be a strong $\mathcal{C}$-hyperideal, $J(A)=M$ and $v \in \mathbb{N}$. Then  every proper hyperideal in $A$ is a $(v+1,v)$-absorbing prime hyperideal if and only if  $A$ is local and $M^v=0$.
\end{theorem}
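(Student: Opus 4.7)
The plan is a direct biconditional proof. The backward direction is short: if $A$ is local with unique maximal hyperideal $M$ and $M^v = 0$, then any proper hyperideal $Q$ lies in $M$, so $M^v = 0 \subseteq Q$, and Proposition \ref{s5} immediately gives that $Q$ is a $(v+1,v)$-absorbing prime hyperideal.

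For the forward direction, assume every proper hyperideal of $A$ is a $(v+1,v)$-absorbing prime hyperideal. The first step is to establish that $A$ is local. Suppose not. By Corollary \ref{s4}, every $(v+1,v)$-absorbing prime hyperideal of $A$ is then prime, so every proper hyperideal of $A$ is prime. Since every hyperideal is strong $\mathcal{C}$ (and hence $\mathcal{C}$) by hypothesis, Lemma \ref{s10} applies and forces $A$ to be a hyperfield. But a hyperfield has $\{0\}$ as its unique maximal hyperideal and is therefore local, contradicting the assumption. Hence $A$ is local; let $M$ denote its unique maximal hyperideal.

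The second step is to show $M^v = 0$. Applying Proposition \ref{s5} to the proper hyperideal $\{0\}$ (which is a strong $\mathcal{C}$-hyperideal by hypothesis), we conclude that either $M^v = 0$ (and we are done) or $\{0\}$ is prime. Assume for contradiction that $\{0\}$ is prime but $M^v \neq 0$, and choose $0 \neq m \in M^v \subseteq M$. The principal hyperideal $\langle m^2 \rangle$ is proper since $m^2 \subseteq M$ forces $\langle m^2 \rangle \subseteq M \subsetneq A$, so Proposition \ref{s5} applies again and yields two sub-cases: either $\langle m^2 \rangle$ is prime, in which case $m^2 \subseteq \langle m^2 \rangle$ gives $m \in \langle m^2 \rangle$; or $M^v \subseteq \langle m^2 \rangle$, in which case $m \in M^v \subseteq \langle m^2 \rangle$. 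In both sub-cases $m \in \langle m^2 \rangle$, so there exists $a \in A$ with $m \in m^2 \circ a$. I will then mimic the Lemma \ref{s10} computation: this gives $(m \circ a - 1) \circ m \subseteq m^2 \circ a - m \subseteq \{0\}$, and for any $t \in m \circ a$ the primeness of $\{0\}$ together with $m \neq 0$ yields $t = 1$. Thus $1 \in m \circ a$, making $m$ a unit, which contradicts $m \in M$.

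The main obstacle is the second horn of Proposition \ref{s5} applied to $\{0\}$: the case where $\{0\}$ is prime does not by itself give $M^v = 0$. Overcoming it requires invoking the hypothesis on a second proper hyperideal, namely $\langle m^2 \rangle$ for a putative nonzero $m \in M^v$, and then recognizing that either branch of Proposition \ref{s5} for $\langle m^2 \rangle$ produces $m \in \langle m^2 \rangle$, at which point the algebraic trick in Lemma \ref{s10} converts this self-containment into unit-hood and delivers the contradiction.
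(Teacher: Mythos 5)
Your proof is correct and follows the same skeleton as the paper's: non-locality is ruled out via Corollary \ref{s4} and Lemma \ref{s10}, and Proposition \ref{s5} does the rest. The differences are worth noting. In the backward direction you invoke Proposition \ref{s5} (any proper $Q$ sits inside $M$, so $M^v=0\subseteq Q$), whereas the paper argues directly on elements ($x_1\circ\cdots\circ x_v\subseteq M^v=0\subseteq Q$); both are fine. The substantive divergence is in the forward direction: the paper simply asserts that $\langle 0\rangle$ being $(v+1,v)$-absorbing prime together with locality yields $M^v=0$ ``by Proposition \ref{s5},'' but that proposition only gives the disjunction ``$\langle 0\rangle$ is prime \emph{or} $M^v\subseteq\langle 0\rangle$,'' so the case where $\langle 0\rangle$ is prime and $M^v\neq 0$ is left unaddressed. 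You correctly identify this as the main obstacle and close it: for $0\neq m\in M^v$ you apply the hypothesis to $\langle m^2\rangle$, observe that either branch of Proposition \ref{s5} forces $m\in\langle m^2\rangle$, and then run the computation from Lemma \ref{s10} (using that $\langle 0\rangle$ is a prime strong $\mathcal{C}$-hyperideal in this case) to conclude $m$ is a unit, contradicting $m\in M$. This extra step is not merely cosmetic --- it repairs a genuine gap in the paper's argument --- and it costs nothing beyond machinery already present in the paper.
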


\begin{proof}
($\Longrightarrow$)  Suppose on contrary that $A$ is not local. By Corollary \ref{s4}, we conclude that every proper  hyperideal in $A$ is prime. Therefore, by Lemma \ref{s10},  we get the result that $A$ is a hyperfield which is impossible. Then $A$ is local. Let $M$ be the only maximal hyperideal of $A$. Since $\langle 0 \rangle$  is a $(v+1,v)$-absorbing prime hyperideal and $A$ is local, we have $M^v=0$ by the hypothesis and Proposition \ref{s5}. 

($\Longleftarrow$) Let $Q$ be an arbitary hyperideal and  $x_1 \circ \cdots \circ x_{v+1} \subseteq Q$ for $x_1,\cdots, x_{v+1} \in A \backslash U(A)$. Since $A$ is local, $M$ is the only maximal hyperideal of $A$ and  $x_1,\cdots, x_v \in A \backslash U(A)$, we conclude that $x_1 \circ \cdots \circ x_v \subseteq M^v=0$ which means $x_1 \circ \cdots \circ x_v \subseteq Q$, as required.
\end{proof}
\begin{theorem} \label{s8}
Let $Q$ be a proper hyperideal of $A$ and $u,v \in \mathbb{N}$ with $u >v$. Then $Q$ is a $(u,v)$-absorbing prime hyperideal of $A$ if and only if for proper hyperideals $Q_1,\cdots,Q_u$ of $A$, $Q_1 \circ \cdots \circ Q_u \subseteq Q$ implies that $Q_1 \circ \cdots \circ Q_v \subseteq Q$ or $Q_{v+1} \circ \cdots \circ Q_u \subseteq Q$. 
\end{theorem}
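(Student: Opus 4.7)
The plan is to prove the two implications separately, with the non-trivial direction being $(\Longrightarrow)$, which amounts to a contradiction argument that stitches together two suitably chosen witness tuples into a single ``bad'' product.

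For $(\Longrightarrow)$, I will assume $Q$ is a $(u,v)$-absorbing prime hyperideal and take proper hyperideals $Q_1,\ldots,Q_u$ with $Q_1 \circ \cdots \circ Q_u \subseteq Q$. Suppose for contradiction that $Q_1 \circ \cdots \circ Q_v \nsubseteq Q$ and $Q_{v+1} \circ \cdots \circ Q_u \nsubseteq Q$. Then there are elements $b_1 \in Q_1,\ldots,b_v\in Q_v$ with $b_1 \circ \cdots \circ b_v \nsubseteq Q$, and elements $c_{v+1} \in Q_{v+1},\ldots,c_u \in Q_u$ with $c_{v+1} \circ \cdots \circ c_u \nsubseteq Q$. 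Since each $Q_i$ is proper, it contains no unit (otherwise $1\in x\circ x^{-1}\subseteq Q_i$ would force $Q_i=A$), hence $b_1,\ldots,b_v,c_{v+1},\ldots,c_u \in A\setminus U(A)$. Because $b_1\circ \cdots \circ b_v \circ c_{v+1}\circ \cdots \circ c_u \subseteq Q_1 \circ \cdots \circ Q_u \subseteq Q$, the $(u,v)$-absorbing prime property of $Q$ forces either $b_1\circ \cdots \circ b_v \subseteq Q$ or $c_{v+1}\circ \cdots \circ c_u \subseteq Q$, contradicting the choice of the witnesses.

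For $(\Longleftarrow)$, I will assume the hyperideal-level condition and verify the defining property on elements. Take $x_1,\ldots,x_u \in A \setminus U(A)$ with $x_1 \circ \cdots \circ x_u \subseteq Q$. Set $Q_i = \langle x_i \rangle$, the principal hyperideal generated by $x_i$. Since $x_i$ is non-unit, $Q_i$ is proper. Because $Q$ is a hyperideal containing $x_1 \circ \cdots \circ x_u$, absorption by elements of $A$ on each factor yields $Q_1 \circ \cdots \circ Q_u = \langle x_1 \circ \cdots \circ x_u \rangle \subseteq Q$. Applying the hypothesis gives either $Q_1 \circ \cdots \circ Q_v \subseteq Q$ or $Q_{v+1} \circ \cdots \circ Q_u \subseteq Q$; using $x_i \in Q_i$ and $x_1 \circ \cdots \circ x_v \subseteq Q_1 \circ \cdots \circ Q_v$ (resp.\ for the tail), we obtain $x_1 \circ \cdots \circ x_v \subseteq Q$ or $x_{v+1} \circ \cdots \circ x_u \subseteq Q$, as required.

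The main obstacle I foresee is the bookkeeping in $(\Longleftarrow)$ around showing that the product of the principal hyperideals is still contained in $Q$; one has to use that $Q$ absorbs $r \circ q \subseteq Q$ for all $r \in A$, $q \in Q$, together with the distributivity axiom $(y+z)\circ x \subseteq y\circ x + z\circ x$, to push from the inclusion $x_1\circ\cdots\circ x_u \subseteq Q$ up to $\langle x_1\rangle \circ \cdots \circ \langle x_u\rangle \subseteq Q$. The contradiction step in $(\Longrightarrow)$ is entirely routine once the non-unit property of elements of proper hyperideals is observed.
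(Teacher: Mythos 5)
Your proposal is correct and follows essentially the same route as the paper: the forward direction by extracting element witnesses from the hyperideals and invoking the element-level definition (you phrase it as a contradiction with witnesses on both sides, the paper fixes tail witnesses and quantifies over the head, which is the same argument), and the converse by passing to the principal hyperideals $\langle x_i\rangle$. Your explicit remarks on why elements of proper hyperideals are non-units and on the distributivity bookkeeping for $\langle x_1\rangle\circ\cdots\circ\langle x_u\rangle\subseteq Q$ are details the paper leaves implicit.
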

\begin{proof}
($\Longrightarrow$) Let $Q$ be a $(u,v)$-absorbing prime hyperideal of $A$. Assume that $Q_1 \circ \cdots \circ Q_u \subseteq Q$ where $Q_1,\cdots,Q_u$ are proper hyperideals of $A$ such that $Q_{v+1},\circ \cdots \circ Q_u \nsubseteq Q$. Then there exists $x_i \in Q_i$ for each $i \in \{v+1,\cdots,u\}$ such that $x_{v+1} \circ \cdots \circ x_u \nsubseteq Q$. Take any $y_i \in Q_i$ for each $i \in \{1,\cdots,v\}$. Therefore we have $y_1 \circ \cdots y_v \circ x_{v+1} \circ \cdots \circ x_u \subseteq Q$. This implies that $y_1 \circ \cdots y_v \subseteq Q$ as $Q$ is  a $(u,v)$-absorbing prime hyperideal of $A$ and $x_{v+1} \circ \cdots \circ x_u \nsubseteq Q$. Hence $Q_1 \circ \cdots \circ Q_v \subseteq Q$, as needed.

($\Longleftarrow$) Let $x_1 \circ \cdots \circ x_u \subseteq Q$ for some $x_1,\cdots,x_u \in A \backslash U(A)$ such that $x_1 \circ \cdots \circ x_v \nsubseteq Q$. Put $Q_i = \langle x_i \rangle$ for each $i \in \{1,\cdots,u\}$. Hence we have $Q_1 \circ \cdots \circ Q_u \subseteq Q$ and $Q_1 \circ \cdots \circ Q_v \nsubseteq Q$. By the hypothesis, we conclude that $Q_{v+1} \circ \cdots \circ Q_u \subseteq Q$ which implies $x_{v+1} \circ \cdots \circ x_u \subseteq Q$.  This means that $Q$ is a $(u,v)$-absorbing prime hyperideal of $A$.
\end{proof}
\begin{proposition} \label{s9}
Let  $u,v \in \mathbb{N}$ with $u >v$,    $Q$  a $(u,v)$-absorbing prime hyperideal of  $A$ and $x \in A \backslash (Q \cup U(A))$. Then $(Q : x)$ is a $(u-1,v-1)$-absorbing prime hyperideal of  $A$. 
\end{proposition}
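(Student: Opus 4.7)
The plan is to show $(Q : x)$ is a proper hyperideal and then verify the $(u-1,v-1)$-absorbing prime property directly by inserting $x$ into a product and invoking the hypothesis on $Q$.

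First, I would observe that $(Q:x) = \{a \in A \mid a \circ x \subseteq Q\}$ is a hyperideal of $A$ (standard), and is proper: if $1 \in (Q:x)$ then $1 \circ x \subseteq Q$, and since $1$ is the identity we have $x \in 1 \circ x \subseteq Q$, contradicting $x \notin Q$. I would also record the easy fact $Q \subseteq (Q:x)$, since every $q \in Q$ satisfies $q \circ x \subseteq Q$ (as $Q$ is a hyperideal).

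Next, suppose $y_1 \circ \cdots \circ y_{u-1} \subseteq (Q:x)$ for $y_1,\dots,y_{u-1} \in A \setminus U(A)$. Unwinding the definition of $(Q:x)$ (and using commutativity/associativity of $\circ$), this is equivalent to
\[
y_1 \circ \cdots \circ y_{v-1} \circ x \circ y_v \circ \cdots \circ y_{u-1} \subseteq Q.
\]
The key trick is to place $x$ in the $v$-th slot so that the $(u,v)$-absorbing prime definition splits the product exactly as needed. Since $x \notin U(A)$ and none of the $y_i$ are units, this is a product of $u$ non-unit elements contained in $Q$. Applying the $(u,v)$-absorbing prime property of $Q$ to this arrangement yields one of two conclusions: either the product of the first $v$ factors lies in $Q$, namely
\[
y_1 \circ \cdots \circ y_{v-1} \circ x \subseteq Q,
\]
which is exactly $y_1 \circ \cdots \circ y_{v-1} \subseteq (Q:x)$, or the product of the last $u-v$ factors lies in $Q$, namely $y_v \circ \cdots \circ y_{u-1} \subseteq Q \subseteq (Q:x)$. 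In either case one of the two required sub-products lies in $(Q:x)$, and the total number of factors involved matches: $v-1$ on one side and $(u-1)-(v-1)=u-v$ on the other. Hence $(Q:x)$ is $(u-1,v-1)$-absorbing prime.

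I do not expect a real obstacle here; the only subtle points are (a) invoking commutativity to place $x$ in position $v$ rather than appending it at the end, and (b) noting the inclusion $Q \subseteq (Q:x)$ so that the second case of the dichotomy is usable. The hypothesis $x \notin U(A)$ is essential precisely to ensure that the lifted product has all $u$ non-unit factors so that the $(u,v)$-absorbing prime hypothesis applies, and $x \notin Q$ is what guarantees properness of $(Q:x)$.
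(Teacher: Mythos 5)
Your proposal is correct and follows essentially the same route as the paper's proof: both insert $x$ into the product so that it sits among the first $v$ factors, apply the $(u,v)$-absorbing prime dichotomy to the resulting product of $u$ non-units, and translate the two outcomes back to $(Q:x)$ via the identity $y_1\circ\cdots\circ y_{k}\subseteq (Q:x)\iff x\circ y_1\circ\cdots\circ y_{k}\subseteq Q$ and the inclusion $Q\subseteq (Q:x)$. The only difference is cosmetic (you argue directly where the paper argues contrapositively, and you additionally record the properness of $(Q:x)$, which the paper leaves implicit).
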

\begin{proof}
Let $x_1 \circ \cdots \circ x_{u-1} \subseteq (Q : x)$ for $x_1, \cdots, x_{u-1} \in A \backslash U(A)$ such that $x_1 \circ \cdots \circ x_{v-1} \nsubseteq (Q : x)$. So we have $x \circ x_1 \circ \cdots \circ x_{u-1}  \subseteq Q$. Since $Q$ is  a $(u,v)$-absorbing prime hyperideal of  $A$ and $x \circ x_1 \circ \cdots \circ x_{v-1}  \nsubseteq Q$, we get the result that $x_v \circ \cdots x_{u-1} \ \subseteq Q \subseteq (Q : x)$.  This shows that $(Q : x)$ is a $(u-1,v-1)$-absorbing prime hyperideal of  $A$. 
\end{proof}

Assume that $A$ is a multiplicative hyperring. Then the set of all hypermatrices of $A$ is denoted by  $M_m(A)$. Let  $I = (I_{ij})_{m \times m}, J = (J_{ij})_{m \times m} \in P^\star (M_m(A))$. Then $I \subseteq J$ if and only if $I_{ij} \subseteq J_{ij}$\cite{ameri}. 
\begin{theorem} \label{8} 
Let  $u,v \in \mathbb{N}$ with $u >v$ and  $Q$ be a hyperideal of $A$. If $M_m(Q)$ is a $(u,v)$-absorbing prime hyperideal of $M_m(A)$, then $Q$ is a $(u,v)$-absorbing prime hyperideal of $A$. 
\end{theorem}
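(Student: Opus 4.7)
The plan is to lift the hypothesis to matrices and then project back, using the embedding $a \mapsto a\,I_m$ of $A$ into $M_m(A)$ via scalar matrices. Suppose $x_1,\ldots,x_u \in A \setminus U(A)$ satisfy $x_1 \circ \cdots \circ x_u \subseteq Q$; I want to deduce that either $x_1 \circ \cdots \circ x_v \subseteq Q$ or $x_{v+1} \circ \cdots \circ x_u \subseteq Q$. For each $i$, I set $X_i := x_i I_m \in M_m(A)$, the scalar hypermatrix with $x_i$ on the diagonal and $0$ elsewhere, and work entirely with these.

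The steps proceed as follows. First, I verify that each $X_i$ fails to be a unit in $M_m(A)$: if $I_m \in X_i \circ Y$ for some $Y \in M_m(A)$, then reading the $(j,j)$-entry forces $1 \in x_i \circ Y_{jj}$, contradicting $x_i \notin U(A)$. Second, I compute $X_1 \circ \cdots \circ X_u$ entrywise: each diagonal entry is exactly the hyperproduct $x_1 \circ \cdots \circ x_u$, which lies in $Q$ by hypothesis, while each off-diagonal entry is a finite sum of hyperproducts at least one of whose factors is $0$, and hence sits inside $Q$ since $0 \in Q$ and $Q$ is a hyperideal. This gives $X_1 \circ \cdots \circ X_u \subseteq M_m(Q)$. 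Third, I apply the $(u,v)$-absorbing prime property of $M_m(Q)$ to conclude that either $X_1 \circ \cdots \circ X_v \subseteq M_m(Q)$ or $X_{v+1} \circ \cdots \circ X_u \subseteq M_m(Q)$. Finally, I read off the $(1,1)$-entry of this containment, which is precisely $x_1 \circ \cdots \circ x_v$ or $x_{v+1} \circ \cdots \circ x_u$, respectively, obtaining the required conclusion.

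I expect the main bookkeeping burden to lie in the entrywise calculation of the second step, and in particular in verifying that the off-diagonal entries really land inside $Q$: one must expand the hypermatrix product carefully, invoke the hyperideal axiom $r \circ 0 \subseteq Q$ for every $r \in A$ (valid since $0 \in Q$), and use closure of $Q$ under finite sums. The non-unit check in the first step is also delicate, since it requires unfolding the definition of $U(M_m(A))$ through the matrix hyperoperation. Once these two ingredients are in place, the descent from $M_m(Q)$ to $Q$ is immediate via the scalar-matrix embedding, and the theorem follows.
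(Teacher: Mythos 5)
Your proof is correct and follows essentially the same route as the paper: embed the $x_i$ as hypermatrices supported on the diagonal, multiply, apply the $(u,v)$-absorbing prime hypothesis to $M_m(Q)$, and read off an entry. The only differences are cosmetic — the paper places each $x_i$ in the $(1,1)$-corner with zeros elsewhere rather than using scalar matrices $x_iI_m$, and it omits the non-unit verification that you (rightly) supply.
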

\begin{proof}
Let $x_1 \circ \cdots  \circ x_u \subseteq Q$ for  $x_1, \cdots , x_u  \in A \backslash U(A)$. Therefore we have
\[\begin{pmatrix}
x_1 \circ \cdots  \circ x_u & 0 & \cdots & 0 \\
0 & 0 & \cdots & 0 \\
\vdots & \vdots & \ddots \vdots \\
0 & 0 & \cdots & 0 
\end{pmatrix}
\subseteq M_m(Q).\]
Since   $M_m(Q)$ is a $(u,v)$-absorbing prime hyperideal of $M_m(A)$ and 
\[ \begin{pmatrix}
x_1 \circ \cdots  \circ x_u & 0 & \cdots & 0\\
0 & 0 & \cdots & 0\\
\vdots & \vdots & \ddots \vdots\\
0 & 0 & \cdots & 0
\end{pmatrix}
=
\begin{pmatrix}
x_1 & 0 & \cdots & 0\\
0 & 0 & \cdots & 0\\
\vdots & \vdots & \ddots \vdots\\
0 & 0 & \cdots & 0
\end{pmatrix}
\circ \cdots \circ
\begin{pmatrix}
x_u & 0 & \cdots & 0\\
0 & 0 & \cdots & 0\\
\vdots & \vdots & \ddots \vdots\\
0 & 0 & \cdots & 0
\end{pmatrix}
\]
we conclude that 
\[ \begin{pmatrix}
x_1 & 0 & \cdots & 0\\
0 & 0 & \cdots & 0\\
\vdots& \vdots & \ddots \vdots\\
0 & 0 & \cdots & 0
\end{pmatrix} 
\circ \cdots \circ
\begin{pmatrix}
x_v & 0 & \cdots & 0\\
0 & 0 & \cdots & 0\\
\vdots& \vdots & \ddots \vdots\\
0 & 0 & \cdots & 0
\end{pmatrix}=
\begin{pmatrix}
x_1 \circ \cdots  \circ x_v & 0 & \cdots & 0\\
0 & 0 & \cdots & 0\\
\vdots& \vdots & \ddots \vdots\\
0 & 0 & \cdots & 0
\end{pmatrix}
\subseteq M_m(Q)\]\\
or 
\[ \begin{pmatrix}
x_{v+1} & 0 & \cdots & 0\\
0 & 0 & \cdots & 0\\
\vdots& \vdots & \ddots \vdots\\
0 & 0 & \cdots & 0
\end{pmatrix} 
\circ \cdots \circ
\begin{pmatrix}
x_u & 0 & \cdots & 0\\
0 & 0 & \cdots & 0\\
\vdots& \vdots & \ddots \vdots\\
0 & 0 & \cdots & 0
\end{pmatrix}=
\begin{pmatrix}
x_{v+1} \circ \cdots  \circ x_u & 0 & \cdots & 0\\
0 & 0 & \cdots & 0\\
\vdots& \vdots & \ddots \vdots\\
0 & 0 & \cdots & 0
\end{pmatrix}
\subseteq M_m(Q).\]

Thus we get the result that  $x_1 \circ \cdots  \circ x_v \subseteq Q$ or $x_{v+1} \circ \cdots  \circ x_u \subseteq Q$. Hence   $Q$ is a $(u,v)$-absorbing prime hyperideal of $A$. 
\end{proof}
Suppose that $(A,+,\circ)$ is a commutative multiplicative hyperring and $x$ is an indeterminate. Then $(A[x],+,\diamond)$ is a polynomail multiplicative hyperring where $tx^n \diamond sx^m=(t \circ s )x^{n+m}$ \cite{Ciampi}.
\begin{theorem} \label{polynomail}
Let  $u,v \in \mathbb{N}$ with $u >v$. If $Q$ is a  $(u,v)$-absorbing prime hyperideal of $(A,+,\circ)$, then $Q[x]$ is a $(u,v)$-absorbing prime hyperideal of $(A[x],+,\diamond)$. 
\end{theorem}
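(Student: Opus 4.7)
The plan is to prove that $Q[x]$ is $(u,v)$-absorbing prime in $A[x]$ by strong induction on the total degree $N=\sum_{i=1}^{u}\deg f_i$, where $f_1,\dots,f_u\in A[x]\setminus U(A[x])$ satisfy $f_1\diamond\cdots\diamond f_u\subseteq Q[x]$. We wish to conclude that $f_1\diamond\cdots\diamond f_v\subseteq Q[x]$ or $f_{v+1}\diamond\cdots\diamond f_u\subseteq Q[x]$.

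For the base case $N=0$, each $f_i$ is a constant in $A$. First I would record the easy observation that if $a\in U(A)$ with $1\in a\circ b$, then viewing $a,b$ as constant polynomials yields $1\in a\diamond b$, so $a\in U(A[x])$. Taking the contrapositive, $f_i\notin U(A[x])$ forces $f_i\notin U(A)$. Since $\diamond$ restricted to constants agrees with $\circ$ and $Q[x]\cap A=Q$, the hypothesis becomes $f_1\circ\cdots\circ f_u\subseteq Q$, and the $(u,v)$-absorbing prime property of $Q$ in $A$ delivers the conclusion directly.

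For the inductive step, let $n_i=\deg f_i$ and $a_i$ be the leading coefficient of $f_i$, and write $f_i=a_ix^{n_i}+g_i$ with $\deg g_i<n_i$. Because $N=n_1+\cdots+n_u$ admits only the single decomposition $(n_1,\dots,n_u)$ using terms available in the $f_i$'s, the coefficient of $x^N$ in every element of $f_1\diamond\cdots\diamond f_u$ lies in $a_1\circ\cdots\circ a_u$, so $a_1\circ\cdots\circ a_u\subseteq Q$. Assuming the $a_i$ are all non-units in $A$, the $(u,v)$-absorbing prime property of $Q$ yields (after relabelling) $a_1\circ\cdots\circ a_v\subseteq Q$. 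I would then use the distributive inclusion $f_1\diamond\cdots\diamond(a_kx^{n_k}+g_k)\diamond\cdots\diamond f_u\subseteq f_1\diamond\cdots\diamond a_kx^{n_k}\diamond\cdots\diamond f_u+f_1\diamond\cdots\diamond g_k\diamond\cdots\diamond f_u$, combined with the fact that subproducts $f_1\diamond\cdots\diamond a_kx^{n_k}\diamond\cdots\diamond f_u$ contribute only terms whose bottom coefficients are dictated by lower $a_{j,l}$'s, to recursively reduce to tuples of strictly smaller total degree. The induction hypothesis then produces $v$-fold products of the reduced tuples landing in $Q[x]$, which combined with $a_1\circ\cdots\circ a_v\subseteq Q$ on the leading slot assemble into $f_1\diamond\cdots\diamond f_v\subseteq Q[x]$.

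The main obstacle is twofold. First, the distributivity $f\diamond(g+h)\subseteq f\diamond g+f\diamond h$ is only a set-inclusion, so peeling off leading terms to lower the degree must be done with care, and the reductions must all be tracked simultaneously rather than variable by variable. Second, a leading coefficient $a_i$ could be a unit in $A$ even though $f_i$ itself is a non-unit in $A[x]$, so the $(u,v)$-absorbing prime hypothesis of $Q$ cannot always be applied directly to $(a_1,\dots,a_u)$; in such a case the plan is to divide through by that unit in $A$, producing a polynomial of strictly lower degree with non-unit leading data, and then to apply the induction hypothesis to the modified tuple.
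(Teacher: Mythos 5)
Your plan aims higher than the paper's own argument: the paper simply declares ``without loss of generality $t_i(x)=a_ix^{n_i}$'' and runs the whole proof on monomials, which is exactly your base case together with your leading-coefficient observation. The trouble is that the extra content you are trying to supply --- the reduction from general polynomials to lower total degree --- is precisely where your proposal has a genuine gap. To invoke the induction hypothesis you need a containment of the form $h_1\diamond\cdots\diamond h_u\subseteq Q[x]$ for a tuple of strictly smaller total degree, but nothing in the hypotheses gives you that: in a multiplicative hyperring the distributive law is only the inclusion $f\diamond(g+h)\subseteq f\diamond g+f\diamond h$, so knowing $f_1\diamond\cdots\diamond f_u\subseteq Q[x]$ and $f_1\diamond\cdots\diamond(a_kx^{n_k})\diamond\cdots\diamond f_u\subseteq f_1\diamond\cdots\diamond f_u + (\text{lower terms})$ does not let you isolate either summand inside $Q[x]$ (one would at least need $Q[x]$ to be a strong $\mathcal{C}$-hyperideal, and even then the bookkeeping across $u$ factors simultaneously is nontrivial). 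You name this obstacle yourself, but the sentence ``recursively reduce to tuples of strictly smaller total degree'' is an assertion, not an argument, and as stated the inductive step does not go through.

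Two further points. First, your patch for a unit leading coefficient does not work: multiplying $f_i$ by an inverse of $a_i$ does not lower $\deg f_i$ (it only normalises the leading coefficient, making it a unit rather than a non-unit), so it cannot feed the induction; the case where some $a_i\in U(A)$ while $f_i\notin U(A[x])$ remains unhandled. Second, the parenthetical ``after relabelling'' is not available to you: a $(u,v)$-absorbing prime hyperideal splits the product into the \emph{first} $v$ factors and the \emph{last} $u-v$ factors, so the conclusion you must reach concerns the specific blocks $f_1,\dots,f_v$ and $f_{v+1},\dots,f_u$ in their given order. If you retreat to the setting the paper actually treats --- each $t_i$ a monomial $a_ix^{n_i}$ with $a_i\notin U(A)$ --- then your base-case computation already gives the theorem in one application of the hypothesis in $A$, which is all the paper's proof does.
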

\begin{proof}
Assume that $t_1(x) \diamond \cdots \diamond t_u(x) \subseteq Q[x]$. Without
loss of generality, we may assume that $t_i(x)=a_ix^{n_i}$ for each $i \in \{1,\cdots, u \}$ and $a_i\in A$. Therefore $a_1 \circ \cdots \circ a_u x^{n_1+\cdots+n_u} \subseteq Q[x]$. This implies that $a_1 \circ \cdots \circ a_u \subseteq Q$. Since $Q$ is a  $(u,v)$-absorbing prime hyperideal of $(A,+,\circ)$, we obtain  $a_1 \circ \cdots \circ a_v \subseteq Q$ or $a_{v+1} \circ \cdots \circ a_u \subseteq Q$ which implies $t_1(x) \diamond \cdots \diamond t_v(x)=a_1x^{n_1} \diamond \cdots \diamond a_vx^{n_v}=a_1 \circ \cdots \circ a_vx^{n_1+\cdots+n_v} \subseteq Q[x]$ or $t_{v+1}(x) \diamond \cdots \diamond t_u(x)=a_{v+1}x^{n_1} \diamond \cdots \diamond a_ux^{n_u}=a_{v+1} \circ \cdots \circ a_ux^{n_{v+1}+\cdots+n_u} \subseteq Q[x]$. Thus $Q[x]$ is a $(u,v)$-absorbing prime hyperideal of $(A[x],+,\diamond)$. 
\end{proof}
In view of Theorem \ref{polynomail}, we get the following result.
\begin{corollary}
Suppose that $Q$ is a $(u,v)$-absorbing prime hyperideal of $A$. Then $Q[x]$ is a $(u,v)$-absorbing prime hyperideal of $A[x]$. 
\end{corollary}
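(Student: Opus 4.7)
The plan is to deduce the corollary directly from Theorem \ref{polynomail}, which asserts exactly the same implication. Since being a $(u,v)$-absorbing prime hyperideal of $A$ already encodes $u>v$ in its very definition, no additional hypothesis needs to be verified here, and the conclusion about $Q[x]$ is precisely what the theorem provides. So I would close the argument in a single line by citing Theorem \ref{polynomail}.

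If I were instead to reprove the statement from scratch, I would mirror the argument used for the theorem. Starting from polynomials $t_1(x),\ldots,t_u(x)\in A[x]$ whose hyperproduct is contained in $Q[x]$, I would first reduce to the case where each $t_i$ is a single monomial $a_i x^{n_i}$, using that the hyperoperation $\diamond$ on $A[x]$ is determined on monomials by $(ax^n)\diamond(bx^m)=(a\circ b)x^{n+m}$ and extended to arbitrary polynomials by distributivity, together with the coefficient-wise description $Q[x]=\{\sum c_i x^i \mid c_i\in Q\}$. The monomial case would then read $t_1\diamond\cdots\diamond t_u=(a_1\circ\cdots\circ a_u)\,x^{n_1+\cdots+n_u}\subseteq Q[x]$, which forces $a_1\circ\cdots\circ a_u\subseteq Q$; the $(u,v)$-absorbing prime property of $Q$ would then yield either $a_1\circ\cdots\circ a_v\subseteq Q$ or $a_{v+1}\circ\cdots\circ a_u\subseteq Q$, each of which lifts back to the required monomial subset of $Q[x]$.

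The only mildly subtle point is the reduction from arbitrary polynomial inputs to monomial inputs; this is the step that Theorem \ref{polynomail} handled with a ``without loss of generality'' remark. Making it fully rigorous would amount to expanding each $t_i$ as a sum of monomials, distributing $\diamond$ across these sums, and checking coefficient by coefficient that if every monomial-by-monomial hyperproduct lands in $Q[x]$, then the correct $v$-fold and $(u-v)$-fold subproducts do as well. No machinery beyond the definition of the polynomial multiplicative hyperring is needed for this bookkeeping, so the corollary follows from Theorem \ref{polynomail} with no essential obstacle.
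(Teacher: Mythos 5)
Your proposal is correct and matches the paper exactly: the paper derives this corollary by directly citing Theorem \ref{polynomail}, whose statement is word-for-word the same implication, so the one-line citation is the intended proof. Your additional remark about rigorously justifying the ``without loss of generality'' reduction to monomials is a fair observation about the theorem's own proof, but it is not needed for the corollary itself.
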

 Recall from \cite{f10} that a mapping $\theta$ from the multiplicative hyperring
$(A_1, +_1, \circ _1)$ into the multiplicative hyperring $(A_2, +_2, \circ _2)$  is  a hyperring good homomorphism if $\theta(x +_1 y) =\theta(x)+_2 \theta(y)$ and $\theta(x\circ_1y) = \theta(x)\circ_2 \theta(y)$  for all $x,y \in A_1$.

\begin{theorem} \label{homo} 
Assume that $A_1$ and $A_2$ are two multiplicative hyperrins such that $\theta: A_1 \longrightarrow A_2$ is  a hyperring
good homomorphism and $\theta(x) \notin U(A_2)$ for every $x \in A_1 \backslash U(A_1)$. Let  $u,v \in \mathbb{N}$ with $u >v$. Then the following  hold. 
\begin{itemize}
\item[\rm{(i)}]~ If $Q_2$ is a $(u,v)$-absorbing prime hyperideal of $A_2$, then $\theta^{-1}(Q_2)$ is a $(u,v)$-absorbing prime hyperideal of $A_1$.
\item[\rm{(ii)}]~ If $\theta$ is surjective and $Q_1$ is a is a $(u,v)$-absorbing prime $\mathcal{C}$-hyperideal of $A_1$ with $Ker (\theta) \subseteq Q_1$, then $\theta(Q_1)$ is a $(u,v)$-absorbing prime hyperideal of $A_2$.
\end{itemize}
\end{theorem}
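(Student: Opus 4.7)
The plan is to verify the defining property of $(u,v)$-absorbing prime hyperideals on both sides by transporting product containments through $\theta$, using that by induction on the two-factor good-homomorphism identity one has $\theta(a_1 \circ_1 \cdots \circ_1 a_k) = \theta(a_1) \circ_2 \cdots \circ_2 \theta(a_k)$ as subsets of $A_2$. The standing assumption that $\theta$ sends non-units to non-units controls the non-unit side condition built into the $(u,v)$-absorbing prime definition.

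For part (i), I would start with $x_1 \circ_1 \cdots \circ_1 x_u \subseteq \theta^{-1}(Q_2)$ for $x_i \in A_1 \backslash U(A_1)$ and apply $\theta$ to obtain $\theta(x_1) \circ_2 \cdots \circ_2 \theta(x_u) \subseteq Q_2$. Each $\theta(x_i)$ lies in $A_2 \backslash U(A_2)$ by hypothesis, so the $(u,v)$-absorbing prime property of $Q_2$ forces one of $\theta(x_1) \circ_2 \cdots \circ_2 \theta(x_v) \subseteq Q_2$ or $\theta(x_{v+1}) \circ_2 \cdots \circ_2 \theta(x_u) \subseteq Q_2$. Pulling back through $\theta$ places the corresponding product inside $\theta^{-1}(Q_2)$. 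Properness of $\theta^{-1}(Q_2)$ follows from $\theta(1_{A_1}) = 1_{A_2} \notin Q_2$, and the hyperideal property is routine.

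For part (ii), the first task is to confirm $\theta(Q_1)$ is a proper hyperideal of $A_2$: surjectivity turns $\theta(Q_1)$ into a hyperideal absorbing sums and products, and if $1 \in \theta(Q_1)$ then $\theta(q) = \theta(1)$ for some $q \in Q_1$ gives $q - 1 \in Ker(\theta) \subseteq Q_1$, forcing $1 \in Q_1$, a contradiction. Next, given $y_1 \circ_2 \cdots \circ_2 y_u \subseteq \theta(Q_1)$ with $y_i \in A_2 \backslash U(A_2)$, I would pick preimages $x_i \in A_1$ with $\theta(x_i) = y_i$ and argue $x_i \notin U(A_1)$: otherwise an inverse $z_i$ of $x_i$ would give $1 \in x_i \circ_1 z_i$, pushing through $\theta$ to $1 \in y_i \circ_2 \theta(z_i)$, contradicting $y_i \notin U(A_2)$. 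Then $\theta(x_1 \circ_1 \cdots \circ_1 x_u) \subseteq \theta(Q_1)$, and the kernel hypothesis lifts this elementwise to $x_1 \circ_1 \cdots \circ_1 x_u \subseteq Q_1$: for any $a$ in the product we can match $\theta(a) = \theta(q)$ with $q \in Q_1$ and use $a - q \in Ker(\theta) \subseteq Q_1$. Applying the $(u,v)$-absorbing prime property of $Q_1$ and pushing the conclusion forward through $\theta$ finishes the argument.

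The key difficulty is ensuring the preimages chosen in (ii) sit in $A_1 \backslash U(A_1)$, so that the $(u,v)$-absorbing prime hypothesis on $Q_1$ actually applies; this relies on $\theta$ preserving the multiplicative identity together with the standing non-unit hypothesis on $\theta$. The condition $Ker(\theta) \subseteq Q_1$ is exactly what permits the lift from $\theta(Q_1)$ back to $Q_1$, while the $\mathcal{C}$-hyperideal hypothesis on $Q_1$ is not strictly invoked in the core steps of this argument but is kept for uniformity with the rest of the paper.
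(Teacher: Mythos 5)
Your proposal is correct and takes essentially the same route as the paper's proof: for (i) you push the product through $\theta$ using the good-homomorphism identity and pull the conclusion back, and for (ii) you choose preimages by surjectivity and lift the containment elementwise to $Q_1$ via $Ker(\theta)\subseteq Q_1$ before applying the $(u,v)$-absorbing prime property and pushing forward. You are in fact somewhat more careful than the paper, which does not explicitly verify the non-unit side conditions or properness, and your observation that the $\mathcal{C}$-hyperideal hypothesis is not strictly needed (since the kernel argument already places every element of the product in $Q_1$) matches what actually happens in the paper's own argument.
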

\begin{proof}
(i) Let $ x_1 \circ_1 \cdots \circ_1 x_u \subseteq \theta^{-1}(Q_2)$ for some $x_1,\cdots,x_u \in A_1$. Then we get  $ \theta(x_1 \circ_1 \cdots \circ_1 x_u)=\theta(x_1)  \circ_2 \cdots \circ_2 \theta(x_u) \subseteq Q_2$ because $\theta$ is  a good homomorphism. Since $Q_2$ is a $(u,v)$-absorbing prime hyperideal of $A_2$, we obtain $\theta(x_1 \circ_1 \cdots \circ_1 x_v)=\theta(x_1) \circ_2 \cdots \circ_2 \theta(x_v) \subseteq Q_2$   which means $x_1 \circ_1 \cdots \circ_1 x_v \subseteq \theta^{-1}(Q_2)$ or $\theta(x_{v+1} \circ_1 \cdots \circ_1 x_u)=\theta(x_{v+1}) \circ_2 \cdots \circ_2 \theta(x_u) \subseteq Q_2$ which implies $x_{v+1} \circ_1 \cdots \circ_1 x_u \subseteq \theta^{-1}(Q_2)$. Thus  $\theta^{-1}(Q_2)$ is a $(u,v)$-absorbing prime hyperideal of $A_1$.

(ii)  Let $ y_1 \circ_2 \cdots \circ_2 y_u \subseteq \theta(Q_1)$ for  $y_1, \cdots, y_u \in A_2$. Then  there exists $x_i \in A_1$  such that  $\theta(x_i)=y_i$ for each $i \in \{1,\cdots,u\}$ because $\theta$ is surjective. Therefore $\theta(x_1 \circ_1 \cdots \circ_1 x_u)=\theta(x_1) \circ_2 \cdots \circ_2 \theta(x_u)\subseteq \theta(Q_1)$. Now, pick any $t \in x_1 \circ_1 \cdots \circ_1 x_u$. Then $\theta(t) \in \theta(x_1 \circ_1 \cdots \circ_1 x_u) \subseteq \theta(Q_1)$ and so there exists $s \in Q_1$ such that $\theta(t)=\theta(s)$. Then we get $\theta(t-s)=0$ which means $t-s \in Ker (\theta)\subseteq Q_1$ and so  $t \in Q_1$. Therefore $x_1 \circ_1 \cdots \circ_1 x_u  \subseteq Q_1$ as $Q_1$ is a $\mathcal{C}$-hyperideal. Since  $Q_1$ is a $(u,v)$-absorbing prime hyperideal of $A_1$,  we get the result that  $x_1 \circ_1 \cdots \circ_1 x_v \subseteq Q_1$ or $x_{v+1} \circ_1 \cdots \circ_1 x_u \subseteq Q_1$. This means $y_1 \circ_2 \cdots \circ_2 y_v =\theta(x_1 \circ_1 \cdots \circ_1 x_v )\subseteq \theta(Q_1)$ or $y_{v+1} \circ_2 \cdots \circ_2 y_u=\theta(x_{v+1} \circ_1 \cdots \circ_1 x_u )\subseteq \theta(Q_1)$.  Consequently,  $\theta(Q_1)$ is a $(u,v)$-absorbing prime hyperideal of $A_2$.
\end{proof}
Now, we have the following result.
\begin{corollary}
 Let the hyperideal $Q_1$ of $A$ be subset the $\mathcal{C}$-hyperideal  $Q_2$  of $A$, $x+Q_1 \notin U(A/Q_1)$ for all $x \in A \backslash U(A)$ and $u,v \in \mathbb{N}$ with $u>v$. Then  $Q_2$ is a $(u,v)$-absorbing prime hyperideal of $A$ if and only if   $Q_2/Q_1$ is a $(u,v)$-absorbing prime hyperideal of $A/Q_1$.
\end{corollary}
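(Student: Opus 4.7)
The plan is to derive this corollary as a direct application of Theorem \ref{homo} using the canonical projection $\pi\colon A \longrightarrow A/Q_1$ given by $\pi(x) = x+Q_1$. This map is a surjective good hyperring homomorphism with $\operatorname{Ker}(\pi)=Q_1$, and the standing hypothesis that $x+Q_1 \notin U(A/Q_1)$ for every $x \in A \setminus U(A)$ is exactly the non-unit-preservation condition $\pi(x) \notin U(A/Q_1)$ required in Theorem \ref{homo}. Since $Q_1 \subseteq Q_2$, a routine verification shows that $\pi(Q_2) = Q_2/Q_1$ and $\pi^{-1}(Q_2/Q_1) = Q_2$.

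For the forward direction, assume $Q_2$ is a $(u,v)$-absorbing prime hyperideal of $A$. Because $Q_2$ is a $\mathcal{C}$-hyperideal by hypothesis, $\pi$ is surjective, and $\operatorname{Ker}(\pi) = Q_1 \subseteq Q_2$, Theorem \ref{homo}(ii) applies and yields that $\pi(Q_2) = Q_2/Q_1$ is a $(u,v)$-absorbing prime hyperideal of $A/Q_1$.

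For the converse, suppose $Q_2/Q_1$ is a $(u,v)$-absorbing prime hyperideal of $A/Q_1$. Then Theorem \ref{homo}(i) applied to $\pi$ gives that $\pi^{-1}(Q_2/Q_1)$ is a $(u,v)$-absorbing prime hyperideal of $A$, and as observed this preimage is precisely $Q_2$. The only delicate point is the translation between the two phrasings of the non-unit-preservation hypothesis; once that identification is made, the entire argument collapses to one invocation of each part of Theorem \ref{homo} on the canonical quotient map.
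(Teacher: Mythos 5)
Your proposal is correct and follows essentially the same route as the paper: the paper's proof likewise applies Theorem \ref{homo} to the canonical good epimorphism $\pi\colon A \longrightarrow A/Q_1$, $\pi(a)=a+Q_1$. You simply spell out the details (that $\ker(\pi)=Q_1\subseteq Q_2$, that $\pi(Q_2)=Q_2/Q_1$ and $\pi^{-1}(Q_2/Q_1)=Q_2$, and that the unit-preservation hypothesis matches) which the paper leaves implicit.
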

\begin{proof}
Consider the homomorphism $\pi :A \longrightarrow A/Q_1$ defined by $\pi(a)=a+Q_1$. Since $\pi$ is a good epimorphism,  the claim follows from Theorem \ref{homo} 
\end{proof}
Let $S$ be a non-empty subset of $A$.  If  $S$ is closed under the hypermultiplication and $S$ contains $1$, then $S$ refers to a multiplicative closed subset (briefly, MCS)\cite{ameri}. Consider the set $(A \times S / \sim)$ of equivalence classes denoted by  $S^{-1}A$  such that $(x_1,r_1) \sim (x_2,r_2)$ if and only if there exists $ r \in S $ with  $ r \circ r_1 \circ x_2=r \circ r_2 \circ x_1$.
The equivalence class of $(x,r) \in A \times S$ is denoted by $\frac{x}{r}$. The triple $(S^{-1}A, \oplus, \odot)$ is a multiplicative hyperring where

$\hspace{1cm}\frac{x_1}{r_1} \oplus \frac{x_2}{r_2}=\frac{r_1 \circ x_2+r_2 \circ x_1}{r_1 \circ r_2}=\{\frac{a+b}{c} \ \vert \ a \in r_1 \circ x_2 , b \in r_2 \circ x_1, c \in r_1 \circ r_2\}$

$\hspace{1cm}\frac{x_1}{r_1} \odot \frac{x_2}{r_2}=\frac{x_1 \circ a_2}{r_1 \circ r_2}=\{\frac{a}{b} \ \vert \ a \in x_1 \circ x_2, b \in r_1 \circ r_2\}$

The localization map $\pi: A \longrightarrow S^{-1}A$, defined by $a \mapsto \frac{a}{1}$,  is a homomorphism of hyperrings. Moreover, if $I$ is a hyperideal of $A$, then $S^{-1}I$ is a hyperideal of $S^{-1}A$ \cite{Mena}.
\begin{theorem}
Assume that $Q$ is a $\mathcal{C}$-hyperideal of $A$, $S$  a  MCS such that  $Q \cap S = \varnothing$ and $u,v \in \mathbb{N}$ with $u >v$. If $Q$ is a $(u, v)$-absorbing prime hyperideal of $A$, then $S^{-1}Q$ is a $(u-1, v-1)$-absorbing prime hyperideal of $S^{-1}A$. 
\end{theorem}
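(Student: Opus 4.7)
The plan is to transfer the containment in $S^{-1}Q$ back to a containment in $Q$, exploit the $(u,v)$-absorbing prime property there, and then push the conclusion back down to $S^{-1}A$. Concretely, take $\frac{a_1}{s_1}, \ldots, \frac{a_{u-1}}{s_{u-1}} \in S^{-1}A \setminus U(S^{-1}A)$ with
$$\frac{a_1}{s_1} \odot \cdots \odot \frac{a_{u-1}}{s_{u-1}} \subseteq S^{-1}Q,$$
and aim to prove that either $\frac{a_1}{s_1} \odot \cdots \odot \frac{a_{v-1}}{s_{v-1}} \subseteq S^{-1}Q$ or $\frac{a_v}{s_v} \odot \cdots \odot \frac{a_{u-1}}{s_{u-1}} \subseteq S^{-1}Q$.

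First, I would unpack the hypothesis. For any $p \in a_1 \circ \cdots \circ a_{u-1}$ and $s \in s_1 \circ \cdots \circ s_{u-1}$, the class $\frac{p}{s}$ lies in $S^{-1}Q$, so the definition of the equivalence relation on $S^{-1}A$ produces $q \in Q$, $r \in S$ and $w \in S$ with $w \circ r \circ p = w \circ s \circ q$. In particular $w \circ r \circ p \subseteq Q$. Using that $Q$ is a $\mathcal{C}$-hyperideal and running through $p \in a_1 \circ \cdots \circ a_{u-1}$, one extracts a single $t \in S$ such that
$$t \circ a_1 \circ \cdots \circ a_{u-1} \subseteq Q.$$

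Next, I would reduce to the case where $t, a_1, \ldots, a_{u-1} \in A \setminus U(A)$. If some $a_i$ were a unit in $A$, then $\frac{a_i}{s_i}$ would be a unit in $S^{-1}A$ (contradicting the hypothesis), so each $a_i$ is a non-unit; the case $t \in U(A)$ is handled either by replacing $t$ by a non-unit representative in $S$ coming from a product $t \circ t'$ with $t' \in S$ non-unit, or, when $S \subseteq U(A)$, by observing that the localization is essentially trivial and the claim reduces to Proposition \ref{s2}. With this reduction, the product $t \circ a_1 \circ \cdots \circ a_{u-1} \subseteq Q$ is a product of $u$ non-units in $A$. Now I apply the $(u,v)$-absorbing prime property of $Q$ to the ordered tuple $(a_1, \ldots, a_{v-1}, t, a_v, \ldots, a_{u-1})$, which yields
$$a_1 \circ \cdots \circ a_{v-1} \circ t \subseteq Q \quad \text{or} \quad a_v \circ \cdots \circ a_{u-1} \subseteq Q.$$

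Finally, I push back to $S^{-1}A$. In the second alternative the conclusion $\frac{a_v}{s_v} \odot \cdots \odot \frac{a_{u-1}}{s_{u-1}} \subseteq S^{-1}Q$ is immediate. In the first alternative, for any $p \in a_1 \circ \cdots \circ a_{v-1}$ and $s \in s_1 \circ \cdots \circ s_{v-1}$ and any $\tau \in t$ (i.e.\ representative element), one has $\tau \circ p \in Q$ and $\tau \circ s \in S$, and the equivalence class satisfies $\frac{p}{s} = \frac{\tau \circ p}{\tau \circ s} \in S^{-1}Q$; hence $\frac{a_1}{s_1} \odot \cdots \odot \frac{a_{v-1}}{s_{v-1}} \subseteq S^{-1}Q$. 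The main obstacle is the unit bookkeeping, namely verifying that one can always select non-unit representatives and that the witness $t \in S$ can be taken non-unit without loss of generality; everything else is a bookkeeping application of the localization axioms and the $\mathcal{C}$-hyperideal hypothesis.
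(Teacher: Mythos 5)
Your argument is essentially the paper's own proof: both extract a single $t\in S$ with $t\circ a_1\circ\cdots\circ a_{u-1}\subseteq Q$ via the $\mathcal{C}$-hyperideal property, apply the $(u,v)$-absorbing prime dichotomy to the $u$-fold product with $t$ grouped among the first $v$ factors, and absorb $t$ into the denominator when returning to $S^{-1}A$; the paper merely phrases the first alternative contrapositively (assuming $\frac{a_1}{s_1}\odot\cdots\odot\frac{a_{v-1}}{s_{v-1}}\nsubseteq S^{-1}Q$ and ruling that branch out) where you argue the dichotomy directly. Your extra care about whether $t$ and the numerators are non-units addresses a point the paper's proof silently passes over, so the proposal is correct and takes the same route.
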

\begin{proof}
Let $\frac{x_1}{r_1} \odot \cdots \odot \frac{x_{u-1}}{r_{u-1}}=\frac{x_1 \circ  \cdots \circ x_{u-1}}{r_1 \circ \cdots \circ r_{u-1}} \subseteq S^{-1}Q$ for  $x_1,  \cdots , x_{u-1} \in A \backslash U(A)$ and $r_1,\cdots,r_{u-1} \in S$ such that $\frac{x_1}{r_1} \odot \cdots \odot \frac{x_{v-1}}{r_{v-1}} \nsubseteq S^{-1}Q$. Take any $a \in x_1 \circ  \cdots \circ x_{u-1}$ and $r \in r_1 \circ \cdots \circ r_{u-1}$. Hence $\frac{a}{r} \in \frac{x_1 \circ  \cdots \circ x_{u-1}}{r_1 \circ \cdots \circ r_{u-1}}$ and so $\frac{a}{r}=\frac{a^{\prime}}{r^{\prime}}$ for some $a^{\prime} \in Q$ and $r^{\prime} \in S$. Then there exists $t \in S$ such that $t \circ a \circ r^{\prime}=t \circ a^{\prime} \circ r$. This means $t \circ a \circ r^{\prime} \subseteq Q$. Since $a \in x_1 \circ  \cdots \circ x_{u-1}$, we conclude that $t \circ a \circ r^{\prime} \subseteq t \circ x_1 \circ  \cdots \circ x_{u-1} \circ r^{\prime}$. Therefore we have  $t \circ x_1 \circ  \cdots \circ x_{u-1} \circ r^{\prime} \subseteq Q$ as $Q$ is a $\mathcal{C}$-hyperideal of $A$ and $t \circ x_1 \circ  \cdots \circ x_{u-1} \circ r^{\prime} \cap Q \neq \varnothing$. Take any $s \in t \circ r^{\prime}$. If $s \circ x_1 \circ  \cdots \circ x_{v-1} \subseteq Q$, then we get $\frac{x_1}{r_1} \odot \cdots \odot \frac{x_{v-1}}{r_{v-1}}=\frac{x_1 \circ \cdots \circ x_{v-1}}{r_1 \circ \cdots \circ r_{v-1}}= \frac{s \circ x_1 \circ \cdots \circ x_{v-1}}{s \circ r_1 \circ \cdots \circ r_{v-1}}\subseteq S^{-1}Q$ which is impossible. Since $Q$ is a $(u, v)$-absorbing prime hyperideal of $A$,  $s \circ x_1 \circ  \cdots \circ x_{u-1} \subseteq Q$ and $s \circ x_1 \circ  \cdots \circ x_{v-1} \nsubseteq Q$, we have $ x_v \circ  \cdots \circ x_{u-1} \subseteq Q$ which implies $\frac{x_v}{r_v} \odot \cdots \odot \frac{x_{u-1}}{r_{u-1}}=\frac{x_v \circ \cdots \circ x_{u-1}}{r_v \circ \cdots \circ r_{u-1}}  \subseteq S^{-1}Q$. This shows that $S^{-1}Q$ is a $(u-1, v-1)$-absorbing prime hyperideal of $S^{-1}A$. 
\end{proof}



\end{document}